\definecolor{Myblue}{rgb}{0,0,0.6}
\theoremstyle{plain}
\newtheorem{theorem}{Theorem}[section]
\newtheorem{proposition}[theorem]{Proposition}
\newtheorem{lemma}[theorem]{Lemma}
\newtheorem{corollary}[theorem]{Corollary}
\theoremstyle{definition}
\newtheorem{definition}[theorem]{Def{}inition}
\newtheorem{example}[theorem]{Example}
\theoremstyle{remark}
\newtheorem{remark}[theorem]{Remark}
\renewcommand{\a}{\alpha}
\newcommand{\Id}{\mathrm{Id}}   
\newcommand{\SO}{\mathrm{SO}}
\newcommand{\bC}{\mathbb{C}}
\newcommand{\bR}{\mathbb{R}}
\newcommand{\bQ}{\mathbb{Q}}
\newcommand{\bZ}{\mathbb{Z}}
\newcommand{\cD}{\mathcal{D}}
\newcommand{\cF}{\mathcal{F}}
\newcommand{\cH}{\mathcal{H}}
\newcommand{\cL}{\mathcal{L}}
\newcommand{\cQ}{\mathcal{Q}}
\newcommand{\fg}{\mathfrak{g}}
\newcommand{\fs}{\mathfrak{s}}
\newcommand{\ft}{\mathfrak{t}}
\newcommand{\fX}{\mathfrak{X}}
\newcommand{\N}{\nabla}
\newcommand{\ox}{\otimes}
\newenvironment{keywords}%
   {\begin{trivlist}\item[]{\bfseries\sffamily Keywords:}\ }
   {\end{trivlist}}
\newenvironment{MSC}%
   {\begin{trivlist}\item[]{\bfseries\sffamily MSC Classification [2010]:}\ }
   {\end{trivlist}}
\begin{document}

\title{K-cosymplectic manifolds}
 \author{Giovanni Bazzoni$^*$ \quad Oliver Goertsches$^\dagger$
 \\[0.5cm]
  \small{\tt \href{mailto:gbazzoni@math.uni-bielefeld.de}{gbazzoni@math.uni-bielefeld.de}} \qquad
  \small{\tt \href{mailto: oliver.goertsches@math.uni-hamburg.de }{ oliver.goertsches@math.uni-hamburg.de }}\\[0.1cm]
  {\normalsize\slshape $^*$Fakult\"{a}t f\"{ur} Mathematik, Universit\"{a}t Bielefeld, }\\[-0.1cm]
  {\normalsize\slshape Postfach 100301, D-33501 Bielefeld}\\[-0.1cm]
  {\normalsize\slshape $^\dagger$Fachbereich Mathematik, Universit\"{a}t Hamburg, }\\[-0.1cm]
  {\normalsize\slshape Bundesstra\ss e 55, D-20146 Hamburg}\\[-0.1cm]}
\date{}


 \maketitle

\begin{abstract} In this paper we study K-cosymplectic manifolds, i.e., smooth cosymplectic manifolds for which the Reeb f{}ield is Killing with respect to some Riemannian metric. These structures
generalize coK\"ahler structures, in the same way as K-contact structures generalize Sasakian structures. In analogy to the contact case, we distinguish between (quasi-)regular and irregular
structures; in the regular case, the K-cosymplectic manifold turns out to be a 
flat circle bundle over an almost K\"ahler manifold. We investigate de Rham and basic cohomology of K-cosymplectic manifolds, as well as cosymplectic and Hamiltonian vector f{}ields and group actions
on such manifolds. The deformations of type I and II in the contact setting have natural analogues for cosymplectic manifolds; those of type I can be used to show that compact K-cosymplectic manifolds
always carry quasi-regular structures. We consider Hamiltonian group actions and use the momentum map to study the equivariant cohomology of the canonical torus action on a compact K-cosymplectic
manifold, resulting in relations between the basic cohomology of the characteristic foliation and the number of closed Reeb orbits on an irregular K-cosymplectic manifold.
\end{abstract}

\begin{keywords}
 K-cosymplectic, basic cohomology, momentum map, deformations, closed Reeb orbit.
\end{keywords}

\begin{MSC}
 Primary 53C25, 53D15; secondary 53C12, 53D20, 55N91.
\end{MSC}

\section{Introduction}

In \cite{GH}, Gray and Hervella organized almost Hermitian structures on even dimensional manifolds
in 16 classes; this was done in order to generalize K\"ahler geometry by requiring only the presence of an almost complex
structure and of a compatible Riemannian metric. The same approach was used by Chinea and Gonz\'alez in \cite{CG} to classify almost
contact metric structures, the odd-dimensional analogue of almost Hermitian structures. The authors show 
how almost contact metric structures can be set apart in a certain number\footnote{4096, to be precise.} of classes. As in the Gray-Hervella paper, the authors of \cite{CG} used the covariant
derivative of the fundamental form of an almost contact metric stucture. In \cite{CM}, instead, almost contact metric structures are classif{}ied using the Nijenhuis tensor.

For us, an almost contact metric structure on an odd-dimensional manifold $M$, consisting of a $1$-form $\eta$, a tensor $\phi\colon TM\to TM$, an adapted Riemannian metric $g$, and the associated
K\"ahler form $\omega$, is called 
\emph{cosymplectic} if $\eta$ and $\omega$ are closed. A cosymplectic structure determines a Poisson structure in a canonical way: every leaf of the foliation $\cF=\ker\eta$ is endowed with a
symplectic structure, given by the pullback of $\omega$.

Cosymplectic structures were introduced by Goldberg and Yano in \cite{GY} under the name of \emph{almost cosymplectic structures}. In fact,
until very recently, the word cosymplectic indicated what we call here a coK\"ahler structure (see \cite{Bl2,Blair,CdLM,FV}). The terminology used in this paper was introduced in \cite{Li} and 
seems to have been adopted since (see \cite{BLO,bazzoni_oprea,CMdNY,GMP}).

While K\"ahler structures play a prominent role in even dimension, it is not so clear which one, among almost contact metric structures, should be taken to be the odd dimensional analogue of K\"ahler
structures. Of all possible candidates, Sasakian and coK\"ahler structures seem to be the most natural. Sasakian geometry has been a very active research area for quite a long time; by now, the
standard reference is \cite{Boyer_Galicki}.

CoK\"ahler (and, more generally, cosymplectic) geometry has also drawn a great deal of interest in the last years (see for instance \cite{BFM,BLO,bazzoni_oprea,FV,GMP,Li}). Very recently, cosymplectic
structures have appeared in a natural way in the study of $b$-symplectic (or $log$-symplectic) structures (see \cite{C,FMM,GMP2,GMS}). We refer to \cite{CMdNY} for a
nice overview on cosymplectic geometry and its connection with other areas of mathematics (specially geometric mechanics) as well as with physics.

In this paper we consider cosymplectic structures for which the Reeb vector f{}ield is Killing. In analogy with the terminology used in the contact metric setting, we call such structures
\emph{K-cosymplectic}. Every coK\"ahler structure is K-cosymplectic, but we will give dif{}ferent examples of K-cosymplectic non coK\"ahler structures.

Apart from cosymplectic structures, there is another possible meaning of cosymplectic, introduced by Libermann in \cite{Libermann}. A \emph{manifold}
$M^{2n+1}$ is \emph{cosymplectic} if it is endowed with a 1-form $\eta$ and a 2-form $\omega$, both closed, such that $\eta\wedge\omega^n\neq 0$ at every point of $M$. 
The Reeb f{}ield is uniquely determined by the conditions $\imath_\xi\omega=0$ and $\imath_\xi\eta=1$. Starting with this, we can def{}ine a cosymplectic manifold $(M,\eta,\omega)$ to be 
\textit{K-cosymplectic} if there exists a Riemannian metric $g$ on $M$ for which the Reeb f{}ield is Killing. Roughly speaking, the dif{}ference  between the two def{}initions of K-cosymplectic is
that in the f{}irst case we f{}ix a metric on $M$, while in the second one we do not. This is akin to the dif{}ference between almost K\"ahler and symplectic geometry.
Despite this apparent discrepancy, Proposition \ref{equivalence:1} shows that these def{}initions are equivalent.

In Sections \ref{section:Structure} and \ref{section:deformations}, for example, we deal with K-cosymplectic structures, whereas in Sections \ref{section:forms}, \ref{section:HamiltonianVF},
\ref{section:HamiltonianGroupActions} and \ref{section:ClosedReebOrbits}, we do not choose a metric; we simply use the fact that our cosymplectic manifold admits some metric for which the Reeb
f{}ield is Killing.

The overall theme of this paper is to develop a theory of K-cosymplectic manifolds, motivated by the existing theory of K-contact manifolds. We introduce several useful concepts in analogy, such as a distinction between regular, 
quasi-regular and irregular structures, see Section \ref{section:Structure}. There, we also prove a structure theorem for compact K-cosymplectic manifolds.
In Section \ref{section:forms} we show that the de Rham and the basic (with respect to the characteristic foliation) cohomology of a compact K-cosymplectic 
manifold contain the same information. Section \ref{section:HamiltonianVF} is devoted to cosymplectic and Hamiltonian vector f{}ields. These have already been introduced in \cite{Albert}, and used to
prove a reduction result \emph{\`a la Marsden-Weinstein} for cosymplectic manifolds. In Section \ref{section:deformations} we introduce a generalization of the deformations of type I 
(we briefly comment on type II deformations in Example \ref{ex:typeIIdeformation}) for general almost contact metric structures, and show that these deformations respect (K-)cosymplectic and coK\"ahler structures. On the one hand 
they can be used to show that any compact manifold with a K-cosymplectic structure admits a quasi-
regular K-cosymplectic structure, see Proposition \ref{prop:exquasireg}; on the other hand there are situations in which they can be used to deform K-cosymplectic structures in such a way that one has
only f{}initely many closed Reeb orbits. This is the topic of Section \ref{section:ClosedReebOrbits}, in which we use equivariant cohomology to investigate the relation between closed Reeb orbits and
basic cohomology on a compact K-cosymplectic manifold, similar as it exists in the K-contact case \cite{GNT}: for a compact K-cosymplectic manifold with f{}irst Betti number $b_1=1$, the number of
closed Reeb orbits is either inf{}inite or given by the total basic Betti number of $M$, see Corollary \ref{cor:dimC}. The main ingredient in the proof are Morse-Bott properties of the cosymplectic
momentum map, as they are shown in Section \ref{section:HamiltonianGroupActions}. We conclude with  examples of irregular coK\"ahler structures on the product of an odd complex quadric with $S^1$ with
minimal number of closed Reeb orbits.

\section{K-cosymplectic manifolds and structures}\label{section:Generalities}

An \textbf{almost contact metric structure} (see \cite{Blair}) on an odd-dimensional smooth manifold $M$ consists of:
\begin{enumerate}
\item[i)] a 1-form $\eta$ and a vector f{}ield $\xi$, the Reeb f{}ield, such that $\eta(\xi)\equiv 1$;
\item[ii)] a tensor $\phi\colon TM\to TM$ such that $\phi^2=-\Id+\eta\ox\xi$;
\item[iii)] a Riemannian metric $g$ such that $g(\phi X,\phi Y)=g(X,Y)-\eta(X)\eta(Y)$ for $X,Y\in\fX(M)$.
\end{enumerate}

Conditions i) and ii) imply $\eta\circ\phi=\phi\xi=0$. Plugging $Y=\xi$ into iii) gives $\eta(X)=g(X,\xi)$ $\forall X\in\fX(M)$. Given an almost contact metric structure $(\eta,\xi,\phi,g)$ on a
$2n+1$-dimensional manifold $M$, its \textbf{K\"ahler form} $\omega\in\Omega^2(M)$ is given by $\omega(X,Y)=g(X,\phi Y)$. This implies that $\eta\wedge\omega^n$ is a volume form on $M$. The almost
contact metric structure $(\eta,\xi,\phi,g)$ is 
\begin{itemize}
\item \textbf{cosymplectic} if $d\eta=d\omega=0$;
\item \textbf{contact metric} if $\omega=d\eta$;
\item \textbf{normal} if $N_\phi+d\eta\ox\xi=0$;
\item \textbf{coK\"ahler} if it is cosymplectic and normal;
\item \textbf{Sasakian} if it is contact metric and normal.
\end{itemize}
Given a tensor $A\colon TM\to TM$, 
its \textbf{Nijenhuis torsion} $N_A\colon \wedge^2 TM\to TM$ is def{}ined as
\[
N_A(X,Y)=A^2[X,Y]-A([AX,Y]+[X,AY])+[AX,AY].
\]

\begin{definition}
A cosymplectic structure $(\eta,\xi,\phi,g)$ on a manifold $M$ is \textbf{K-cosymplectic} if the Reeb f{}ield is Killing.
\end{definition}

Let us consider the following four tensors on an almost contact metric manifold:
\begin{itemize}
\item $N^{(1)}(X,Y)=N_\phi(X,Y)+d\eta(X,Y)\xi$;
\item $N^{(2)}(X,Y)=(L_{\phi X}\eta)(Y)-(L_{\phi Y}\eta)(X)$;
\item $N^{(3)}(X)=(L_{\xi}\phi)(X)$;
\item $N^{(4)}(X)=(L_{\xi}\eta)(X)$.
\end{itemize}

It is well known (see \cite[Theorem 6.1]{Blair}) that if $N^{(1)}=0$, i.e. if the almost contact metric structure is normal, then $N^{(i)}=0$, for $i=2,3,4$. 
We collect in the following proposition some properties of $N^{(i)}$ for cosymplectic almost contact metric structures. We refer to \cite[Section 3]{CMdNY} for the proofs.

\begin{proposition}
Assume that the structure $(\eta,\xi,\phi,g)$ is cosymplectic. Then $N^{(2)}=N^{(4)}=0$ and 
\[
\N\xi=-\frac{1}{2}\phi\circ N^{(3)},
\]
where $\N$ is the Levi-Civita connection of $g$. In particular, $N^{(3)}$ vanishes if and only if $\xi$ is a parallel vector f{}ield.
\end{proposition}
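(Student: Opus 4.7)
The plan is to dispose of the two vanishings first and then establish the covariant-derivative formula. For $N^{(4)}$, Cartan's magic formula together with $\eta(\xi)=1$ gives $L_\xi\eta = \iota_\xi d\eta + d(\eta(\xi)) = 0$ since $d\eta=0$, whence $N^{(4)}=0$. For $N^{(2)}$, Cartan likewise yields $L_{\phi X}\eta = \iota_{\phi X}d\eta + d(\eta(\phi X)) = 0$, since $d\eta=0$ and $\eta\circ\phi=0$; hence $N^{(2)}=0$ term by term.

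For the formula $\N\xi=-\tfrac{1}{2}\phi\circ N^{(3)}$, the strategy is to first reduce $\N\xi$ to the Lie derivative $L_\xi g$ and then to compute $L_\xi g$ in terms of $N^{(3)}$ using the closedness of $\omega$. Since $\eta=g(\xi,\cdot)$ is closed, a direct calculation using the Leibniz rule and the torsion-free metric property of $\N$ shows that the antisymmetric part of $(X,Y)\mapsto g(\N_X\xi,Y)$ equals $\tfrac{1}{2}d\eta$ (hence vanishes) while its symmetric part equals $\tfrac{1}{2}(L_\xi g)$, giving $2g(\N_X\xi, Y) = (L_\xi g)(X,Y)$.

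Next, $\iota_\xi\omega=0$ because $\omega(\xi,X)=g(\xi,\phi X)=\eta(\phi X)=0$, and $d\omega=0$ by hypothesis, so Cartan again gives $L_\xi\omega=0$. Applying $L_\xi$ to $\omega(X,Y)=g(X,\phi Y)$ and using $L_\xi\phi=N^{(3)}$ produces $(L_\xi g)(X,\phi Y) = -g(X, N^{(3)}(Y))$. Substituting $Y\mapsto -\phi Y$ and invoking $-\phi^2 = \Id-\eta\ox\xi$, $(L_\xi g)(X,\xi)=N^{(4)}(X)=0$, and $N^{(3)}(\xi)=0$ converts this to $(L_\xi g)(X,Y) = g(X, N^{(3)}(\phi Y))$.

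The last ingredient is the anti-commutation identity $N^{(3)}\circ\phi=-\phi\circ N^{(3)}$, which follows from a short direct expansion of $[\xi,\phi^2 X]-\phi[\xi,\phi X]$ using $\phi^2=-\Id+\eta\ox\xi$ and $N^{(4)}=0$. Combined with the symmetry of $L_\xi g$ and skewness of $\phi$ with respect to $g$, the previous display rearranges to $2g(\N_X\xi, Y) = -g(\phi N^{(3)}(X),Y)$, proving the formula. The claim about parallelism of $\xi$ is then immediate: the nontrivial direction uses that $N^{(3)}$ is horizontal ($\eta\circ N^{(3)}=0$, a one-line check from $N^{(4)}=0$ and $\eta\circ\phi=0$), so $\phi\circ N^{(3)}=0$ forces $N^{(3)}=0$ via $\phi^2=-\Id+\eta\ox\xi$. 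The main technical work lies in the substitution of paragraph three and the verification of the anti-commutation identity; everything else is a one-line application of Cartan or of the metric compatibility of $\N$.
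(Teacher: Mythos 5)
Your proof is correct and complete. Note that the paper itself does not prove this proposition --- it defers to \cite[Section 3]{CMdNY} --- so there is no in-text argument to compare against; your write-up supplies exactly the standard derivation that reference contains. The key steps all check out: $N^{(2)}=N^{(4)}=0$ are immediate from Cartan's formula with $d\eta=0$, $\eta(\xi)=1$ and $\eta\circ\phi=0$; the splitting $g(\N_X\xi,Y)=\tfrac12 d\eta(X,Y)+\tfrac12(L_\xi g)(X,Y)$ reduces everything to $L_\xi g$; the Leibniz identity $(L_\xi\omega)(X,Y)=(L_\xi g)(X,\phi Y)+g(X,N^{(3)}(Y))$ together with $L_\xi\omega=0$, the substitution $Y\mapsto\phi Y$, the anticommutation $N^{(3)}\circ\phi=-\phi\circ N^{(3)}$ (from $L_\xi(\phi^2)=L_\xi(\eta\otimes\xi)=0$) and the symmetry of $L_\xi g$ yield $2g(\N_X\xi,Y)=-g(\phi N^{(3)}(X),Y)$; and the final equivalence uses correctly that $\eta\circ N^{(3)}=0$ so that $\phi\circ N^{(3)}=0$ forces $N^{(3)}=0$.
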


A parallel vector f{}ield on a Riemannian manifold is always Killing, hence the vanishing of $N^{(3)}$ is a suf{}f{}icient condition for the cosymplectic structure to be K-cosymplectic.
But it is also necessary, as the following Proposition shows:

\begin{proposition}
Let $(\eta,\xi,\phi,g)$ be a K-cosymplectic structure. Then $N^{(3)}=0$.
\end{proposition}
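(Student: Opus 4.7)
The plan is to show that $\xi$ is parallel with respect to the Levi-Civita connection of $g$; the preceding proposition then yields $N^{(3)}=0$ directly, since it asserts that $\N\xi=0$ is equivalent to $N^{(3)}=0$.

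To establish parallelism of $\xi$, I would invoke the standard fact that a Killing $1$-form is closed if and only if it is parallel, applied to $\eta$. The almost contact metric axioms recalled at the beginning of this section give $\eta(X)=g(X,\xi)$, and hence $(\N_X\eta)(Y)=g(\N_X\xi,Y)$. The Killing hypothesis $L_\xi g=0$ is equivalent to the skew-symmetry
\[
g(\N_X\xi,Y)+g(\N_Y\xi,X)=0
\]
for all $X,Y\in\fX(M)$, so $\N\eta$ is skew-symmetric as a $(0,2)$-tensor. Combining this with the general identity
\[
d\eta(X,Y)=(\N_X\eta)(Y)-(\N_Y\eta)(X)=2(\N_X\eta)(Y),
\]
the cosymplectic hypothesis $d\eta=0$ forces $\N\eta\equiv 0$, i.e. $\N\xi=0$.

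There is no genuine obstacle here: the argument only uses the ``$d\eta=0$'' half of the cosymplectic condition together with the compatibility $\eta=g(\cdot,\xi)$, while the ``$d\omega=0$'' half enters only indirectly through the previous proposition, which supplies the implication $\N\xi=0\Rightarrow N^{(3)}=0$. One could alternatively prove the latter implication directly from $\N\xi=-\tfrac12\phi\circ N^{(3)}$: if $\phi\circ N^{(3)}=0$, then $N^{(3)}(X)\in\ker\phi=\bR\xi$, but a short calculation using $L_\xi\eta=0$ and $\eta\circ\phi=0$ shows $\eta(N^{(3)}(X))=0$, hence $N^{(3)}=0$.
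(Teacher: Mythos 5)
Your argument is correct: the Killing condition makes $\N\eta$ skew-symmetric, $d\eta=0$ makes it symmetric, hence $\N\xi=0$, and the preceding proposition's equivalence between $N^{(3)}=0$ and parallelism of $\xi$ finishes the proof. The paper itself prints no proof of this proposition (it defers to the literature), but this is exactly the intended route, consistent with Remark \ref{eta_parallel}, where the authors identify K-cosymplectic structures with cosymplectic structures satisfying $\N\eta=0$.
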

We can therefore give an equivalent characterization of K-cosymplectic structures:
\begin{corollary}\label{characterization}
A cosymplectic structure $(\eta,\xi,\phi,g)$ on a manifold $M$ is K-cosymplectic if and only if $N^{(3)}=0$. In particular, the Reeb f{}ield is parallel on a K-cosymplectic manifold.
\end{corollary}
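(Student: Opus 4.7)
The plan is to exploit the preceding proposition, which identifies $\nabla\xi$ with $-\frac{1}{2}\phi\circ N^{(3)}$ and states that $N^{(3)}=0$ is equivalent to $\xi$ being parallel. Thus it suffices to prove that on a K-cosymplectic manifold the Reeb field is in fact parallel; then $N^{(3)}=0$ will follow immediately from the prior proposition.

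To show $\nabla\xi=0$, I would view $\nabla\xi$ as the endomorphism $X\mapsto \nabla_X\xi$ of $TM$ and demonstrate that it is simultaneously $g$-symmetric and $g$-antisymmetric, hence zero. For antisymmetry, I would simply invoke the Killing hypothesis: by definition of a Killing field, $g(\nabla_X\xi,Y)+g(X,\nabla_Y\xi)=0$. For symmetry, I would use that $\eta$ is closed together with the torsion-freeness of the Levi-Civita connection. The standard identity
\[
d\eta(X,Y)=g(\nabla_X\xi,Y)-g(\nabla_Y\xi,X),
\]
which follows by writing $\eta(\,\cdot\,)=g(\,\cdot\,,\xi)$ and expanding $d\eta(X,Y)=X\eta(Y)-Y\eta(X)-\eta([X,Y])$ with $[X,Y]=\nabla_XY-\nabla_YX$, gives $g(\nabla_X\xi,Y)=g(\nabla_Y\xi,X)$ because $d\eta=0$ on a cosymplectic manifold.

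Combining the two, $\nabla\xi$ is a self-adjoint and skew-adjoint endomorphism with respect to $g$, so $\nabla\xi=0$. The previous proposition then yields $\phi\circ N^{(3)}=0$, and a brief check (using $N^{(4)}=0$ in the cosymplectic case) that $N^{(3)}$ takes values in $\ker\eta$ — where $\phi$ is injective — forces $N^{(3)}=0$; alternatively, one quotes directly the equivalence stated in the previous proposition.

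The argument is essentially mechanical once the two symmetry properties are in place, so there is no real obstacle; the only subtlety is the bookkeeping needed to rewrite $d\eta$ in terms of $\nabla\xi$, which is a routine consequence of the Koszul–type formula for closed 1-forms.
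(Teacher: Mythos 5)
Your proposal is correct, and it does a bit more work than the paper itself, which derives this corollary with no written proof at all: the paper simply combines the two preceding propositions, namely the identity $\nabla\xi=-\frac{1}{2}\phi\circ N^{(3)}$ (quoted from the literature) and the unproven assertion that $N^{(3)}=0$ is \emph{necessary} for a K-cosymplectic structure. Your symmetric/antisymmetric argument supplies a clean, self-contained proof of exactly that missing necessity: since $\eta=g(\cdot,\xi)$ is closed, the endomorphism $X\mapsto\nabla_X\xi$ is $g$-self-adjoint, while the Killing condition makes it skew-adjoint, so $\nabla\xi=0$; parallel fields are Killing, which gives the converse, and the preceding proposition's equivalence (or your direct check that $N^{(3)}$ takes values in $\ker\eta$, where $\phi$ is injective) converts $\nabla\xi=0$ into $N^{(3)}=0$. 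This is the standard fact that a Killing field dual to a closed $1$-form is parallel, and it is presumably the argument underlying the citation to Cappelletti-Montano--de Nicola--Yudin; the only thing your write-up buys beyond the paper is that the reader need not chase that reference. No gaps.
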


\begin{remark}\label{eta_parallel}
If $(\eta,\xi,\phi,g)$ is K-cosymplectic, $\eta(X)=g(X,\xi)$ and $\N\xi=0$ imply $\N\eta=0$. The converse is also true: if $(\eta,\xi,\phi,g)$ is cosymplectic and $\N\eta=0$, then $\xi$ is parallel,
hence Killing, and $(\eta,\xi,\phi,g)$ is K-cosymplectic. Thus we see that K-cosymplectic structures are precisely the almost contact metric structures in the class $\mathcal{C}_2$ of \cite{CG}.
\end{remark}

\begin{remark}
The same terminology is used in the context of contact metric structures, where the name K-contact is used when the Reeb f{}ield is Killing, which is
equivalent to $N^{(3)}=0$.
\end{remark}

As mentioned in the introduction there is a second notion of cosymplectic:
\begin{definition} A $2n+1$-dimensional manifold $M$ is \textbf{cosymplectic} if it is endowed with a $1$-form $\eta$ and a $2$-form $\omega$, both closed, such that $\eta\wedge \omega^n$ is a volume
form. The \textbf{Reeb f{}ield} is uniquely determined by $\eta(\xi)=1$ and $\imath_\xi\omega=0$. We call $M$ \textbf{K-cosymplectic} if there exists a Riemannian metric on $M$ for which $\xi$ is a
Killing vector f{}ield.
\end{definition}

Given a cosymplectic manifold $(M,\eta,\omega)$, the $2$-form $\omega$ induces a symplectic structure on $\cD=\ker\eta$. It is well known that in this case, there exist a Riemannian 
metric $g_\cD$ and an almost complex structure $J$ on $\cD$ such that $g_\cD(X,Y)=\omega(JX,Y)$ and $g_\cD(JX,JY)=g_\cD(X,Y)$ for all $X,Y$ tangent to $\cD$ (see \cite{Audin}). The pair $(g_\cD,J)$ is said to be compatible with 
$\omega$. Extend $g_\cD$ to the whole tangent bundle by requiring $\xi$ and $\cD$ to be orthogonal and $\xi$ to have length $1$. We get a Riemannian metric $g=g_\cD+\eta\ox\eta$, which is called
\textbf{adapted} to the 
cosymplectic structure $(\eta,\omega)$. 
Such an adapted metric is not unique, as it depends on the choice of $g_\cD$.

We thus have two meanings of the word K-cosymplectic. We show next that these two def{}initions are equivalent.

\begin{proposition}\label{equivalence:1}
 Let $(M,\eta,\omega)$ be a cosymplectic manifold and let $\xi$ be the Reeb f{}ield. The following are equivalent:
 \begin{enumerate}
  \item $(M,\eta,\omega)$ is K-cosymplectic;
  \item $M$ admits an adapted Riemannian metric $g$ for which $\xi$ is Killing;
  \item $M$ carries a K-cosymplectic structure $(\eta,\xi,\phi,g)$.
 \end{enumerate}
\begin{proof}
 Suppose $(M,\eta,\omega)$ is a K-cosymplectic manifold and let $\bar{g}$ be a metric for which $\xi$ is Killing. Following \cite{Ruk_1}, we show how to construct 
 an adapted metric $g$ for which $\xi$ is Killing. As a byproduct, we will also get a tensor $\phi$ such that $(\eta,\xi,\phi,g)$ is K-cosymplectic with K\"ahler form $\omega$, proving basically that a K-cosymplectic manifold 
 $(M,\eta,\omega)$ with an adapted Riemannian metric $g$ for which $\xi$ is Killing carries a K-cosymplectic structure. This will give $1.\Rightarrow 2.\Rightarrow 3.$ altogether ($3.\Rightarrow 1.$ is obvious).
 
 First of all we def{}ine a new Riemannian metric $\tilde{g}$ by 
 \[
 \tilde{g}(X,Y)=\frac{\bar{g}(X,Y)}{\bar{g}(\xi,\xi)} - \left( \frac{\bar{g}(\xi,X)\bar{g}(\xi,Y)}{\bar{g}(\xi,\xi)^2} - \eta(X)\eta(Y)\right).
 \]
 This metric has the property that $\tilde{g}(\xi,\xi)=1$ (note that it is positive def{}inite on $\ker \eta$ because of the Cauchy-Schwarz inequality) and that $\xi$ is still a Killing vector f{}ield
with  respect to $\tilde{g}$. It satisf{}ies $\tilde{g}(\xi,X) = \eta(X)$ for all $X$; in particular, $\cD=\ker \eta$ equals the orthogonal complement of $\xi$ with respect to $\tilde{g}$.
 
 Def{}ine a tensor $A$ on $M$ by $\omega(X,Y)=\tilde{g}(AX,Y)$, for $X,Y\in\fX(M)$. Then $A\xi=0$ and $L_\xi A=0$. By the properties of $\tilde{g}$, the distribution $\cD$ is invariant under $A$. By
polar decomposition (see \cite{Audin,Blair}), we obtain $A=\phi\circ B$, where $B=\sqrt{A^tA}$. Because $A$ is skew-symmetric with respect to $\tilde{g}$, it is in particular normal, and it follows
that $\phi$ and $B$ commute. Moreover, $B$ is given by $B=\sqrt{A^tA}=\sqrt{-A^2}$; hence, $B^2 = -A^2 = -\phi^2B^2$, which implies that $\phi$ restricts to an almost complex structure on $\cD$. Since
$B^2=- A^2$, taking the Lie derivative in the $\xi$
 direction we get
\[
B\circ L_\xi B+L_\xi B\circ B=0.
\]
Let $X\in\fX(M)$ be an eigenvector f{}ield of $B$ with eigenfunction $e^\lambda$. Then
\[
B(L_\xi B(X))=-e^\lambda L_\xi B(X),
\]
and $L_\xi B(X)$ is an eigenvector f{}ield of $B$ with negative eigenfunction $-e^\lambda$. This is impossible, hence $L_\xi B=0$. This is equivalent to $L_\xi\phi=0$, since $L_\xi A=0$. Def{}ine a
metric
$g$ on $M$ by
\begin{equation}\label{adapted}
\omega(X,Y)=g(\phi X,Y) \quad \mathrm{and} \quad g(\xi,\xi)=1. 
\end{equation}
This implies that $g$ is adapted to $(\eta,\omega)$. Since $L_\xi\omega=L_\xi\phi=0$, \eqref{adapted} also shows that $\xi$ is Killing with respect to $g$.
It is immediate to check that $(\eta,\xi,\phi,g)$ is an almost contact metric structure with K\"ahler form $\omega$; since $d\eta=d\omega=0$, it is a 
cosymplectic structure. F{}inally, $L_\xi\phi=N^{(3)}=0$, hence the structure is K-cosymplectic by Corollary \ref{characterization}.
\end{proof}
\end{proposition}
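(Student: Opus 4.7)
The implications $3 \Rightarrow 2 \Rightarrow 1$ are essentially tautological (an adapted metric coming from a K-cosymplectic structure already makes $\xi$ Killing), so the real content is $1 \Rightarrow 3$. My plan is to start from an arbitrary Riemannian metric $\bar g$ for which $\xi$ is Killing and massage it in two stages: first, correct $\bar g$ so that it interacts nicely with $\eta$ and $\xi$; second, extract the tensor $\phi$ from the $2$-form $\omega$ by polar decomposition, and then read off the adapted metric.

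The first stage is a purely algebraic correction. I would set
\[
\tilde g(X,Y) = \frac{\bar g(X,Y)}{\bar g(\xi,\xi)} - \frac{\bar g(\xi,X)\bar g(\xi,Y)}{\bar g(\xi,\xi)^2} + \eta(X)\eta(Y).
\]
Positivity on $\ker\eta$ follows from Cauchy--Schwarz, the identities $\tilde g(\xi,\xi)=1$ and $\tilde g(\xi,\cdot)=\eta$ are immediate, and because the correction is built from $\xi$-invariant data (namely $\bar g(\xi,\xi)$, $\bar g(\xi,\cdot)$, and $\eta$), $\xi$ remains Killing for $\tilde g$. This puts the remaining problem entirely on $\cD = \ker\eta$, which is now the $\tilde g$-orthogonal complement of $\xi$.

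Next I would introduce $A \colon TM \to TM$ by $\omega(X,Y) = \tilde g(AX,Y)$. Since $\iota_\xi\omega=0$ and $\omega$ is skew, $A$ is $\tilde g$-skew and vanishes on $\xi$, so it preserves $\cD$; the volume condition $\eta \wedge \omega^n \ne 0$ makes $A|_\cD$ invertible. Polar decomposition then gives $A = \phi\circ B$ with $B = \sqrt{A^tA} = \sqrt{-A^2}$ positive definite on $\cD$ and $\phi$ orthogonal; from $B^2 = -A^2 = -\phi^2 B^2$ and invertibility of $B$ on $\cD$, we get $\phi^2|_\cD = -\Id$, so $\phi$ is a compatible almost complex structure on $\cD$ (and $\phi\xi = 0$).

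The delicate step, and the main obstacle, is to show $L_\xi\phi = 0$, for this is what ultimately yields $N^{(3)} = 0$ and hence K-cosymplecticity via Corollary \ref{characterization}. Since $\xi$ is Killing for $\tilde g$ and $L_\xi\omega = d\iota_\xi\omega + \iota_\xi d\omega = 0$, we obtain $L_\xi A = 0$, whence $L_\xi B^2 = 0$. Expanding $L_\xi(B\circ B)$ gives the anticommutation relation $B\circ L_\xi B + L_\xi B\circ B = 0$. A pointwise eigenvalue argument then forces $L_\xi B = 0$: if $X$ is a $B$-eigenvector with positive eigenvalue $e^\lambda$, the relation implies $L_\xi B(X)$ is a $B$-eigenvector with eigenvalue $-e^\lambda$, impossible by positive definiteness. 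Hence $L_\xi B = 0$, and combined with $L_\xi A = 0$ this yields $L_\xi\phi = 0$. Finally I would define the adapted metric by $g(X,Y) = \omega(X,\phi Y) + \eta(X)\eta(Y)$, verify the almost contact metric axioms by direct computation on $\cD$ and along $\xi$, observe that $\xi$ is Killing for $g$ (using $L_\xi\omega = L_\xi\phi = 0$), and conclude with Corollary \ref{characterization} that the resulting structure is K-cosymplectic.
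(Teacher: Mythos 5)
Your proposal is correct and follows essentially the same route as the paper: the same two-stage construction (the algebraic correction $\tilde g$ of $\bar g$, then the polar decomposition $A=\phi\circ B$ of the tensor defined by $\omega(X,Y)=\tilde g(AX,Y)$), the same eigenvalue argument forcing $L_\xi B=0$, and the same conclusion via Corollary \ref{characterization}. The only differences are cosmetic (you make explicit the invertibility of $A|_\cD$ and the reason $L_\xi A=0$, and your formula for the adapted metric is an equivalent rewriting of the paper's equation \eqref{adapted}).
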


In the compact case, we can characterize K-cosymplectic manifolds in terms of the existence of a certain torus action on $M$. 
More precisely, suppose that $(M,\eta,\omega)$ is a compact K-cosymplectic manifold. Let $\xi$ be the Reeb f{}ield and $g$ be an adapted metric for which $\xi$ is Killing (such a metric exists by
Proposition \ref{equivalence:1}). By the Myers-Steenrod theorem (see \cite{MS}), the isometry group of 
$(M,g)$ is a compact Lie group $G$. The closure of the Reeb flow in $G$ is a torus. Hence, $M$ is endowed with a smooth torus action. In \cite{Yamazaki}, Yamazaki proved, in the context of K-contact geometry, that the existence 
of such a torus action on a compact contact manifold $(M,\eta)$ completely characterizes K-contactness. The same statement holds for compact K-cosymplectic manifolds. Namely we have:
\begin{proposition}\label{Yamazaki}
 Let $(M,\eta,\omega)$ be a compact cosymplectic manifold. The following are equivalent:
 \begin{enumerate}
  \item $(M,\eta,\omega)$ is K-cosymplectic;
  \item there exists a torus $T$, a smooth ef{}fective $T$-action $h\colon T\times M\to M$ and a homomorphism $\Psi\colon\bR\to T$ with dense image such that $\psi_t = h_{\Psi(t)}$, where $\psi$ is the
flow of the Reeb f{}ield.
 \end{enumerate}
\end{proposition}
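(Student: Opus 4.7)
The plan is to handle the two implications separately, with (1)$\Rightarrow$(2) essentially packaging the argument sketched in the paragraph preceding the statement, and (2)$\Rightarrow$(1) being an averaging argument.

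For (1)$\Rightarrow$(2), I start by using Proposition \ref{equivalence:1} to pick an adapted Riemannian metric $g$ on $M$ for which the Reeb field $\xi$ is Killing. Since $M$ is compact, the Myers-Steenrod theorem says that the isometry group $\mathrm{Isom}(M,g)$ is a compact Lie group $G$. The Reeb flow $\psi\colon\bR\to G$ is a one-parameter subgroup, so its image is contained in a unique smallest closed subgroup, namely the closure $\overline{\psi(\bR)}\subseteq G$. This closure is a compact connected abelian Lie subgroup of $G$, hence a torus $T_0$, and $\psi$ factors through a homomorphism $\Psi_0\colon\bR\to T_0$ with dense image. The natural action of $T_0$ on $M$ is smooth but need not be effective; let $K\subseteq T_0$ denote the (closed) ineffective kernel and set $T:=T_0/K$. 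Since $K$ is a closed subgroup of a torus, $T$ is again a torus; the induced action $h\colon T\times M\to M$ is effective and smooth, and the composed homomorphism $\Psi\colon\bR\xrightarrow{\Psi_0}T_0\to T$ still has dense image (the continuous image of a dense set is dense). By construction $\psi_t=h_{\Psi(t)}$.

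For (2)$\Rightarrow$(1), suppose the torus action $h$ and homomorphism $\Psi$ are given. Take an arbitrary Riemannian metric on $M$ and average it with respect to the Haar measure on $T$ to obtain a $T$-invariant Riemannian metric $g$. Then each diffeomorphism $h_t$ with $t\in T$ is an isometry of $(M,g)$; in particular, $\psi_t=h_{\Psi(t)}$ is a one-parameter group of isometries, so its generator $\xi$ is a Killing vector field. This shows that $(M,\eta,\omega)$ is K-cosymplectic in the sense of the second definition, which is equivalent to the first by Proposition \ref{equivalence:1}.

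The only nonroutine point is the passage from the (possibly ineffective) action of $T_0=\overline{\psi(\bR)}$ on $M$ to an effective one, where one has to verify that quotienting by the ineffective kernel preserves both the torus structure and the density of the image of $\Psi$; both follow from the standard fact that closed subgroups and quotients of tori are tori, together with continuity of the quotient map. Everything else is a direct application of Myers-Steenrod, the closure-of-one-parameter-subgroup fact, and averaging.
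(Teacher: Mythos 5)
Your proof is correct and follows exactly the route the paper intends: the paper gives no details, deferring to the formally analogous argument in Yamazaki's Proposition 2.1 for K-contact manifolds, and your write-up (Myers--Steenrod plus closure of the Reeb flow for one direction, Haar averaging for the other) is precisely that argument transplanted to the cosymplectic setting. The only superfluous step is the quotient by the ineffective kernel: since $T_0=\overline{\psi(\bR)}$ is already a subgroup of $\mathrm{Isom}(M,g)$ acting tautologically, its action on $M$ is automatically effective, so $K$ is trivial.
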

\begin{proof}
 The proof is formally analogous to that of \cite[Proposition 2.1]{Yamazaki}.
\end{proof}

According to \cite{GY}, K-cosymplectic geometry coincides with coK\"ahler geometry in dimension 3. However, in higher dimension, K-cosymplectic structures strictly generalize coK\"ahler structures. 
To see this, recall that compact coK\"ahler manifolds satisfy very strong topological properties (see \cite{bazzoni_oprea,CdLM}). We collect them:
\begin{proposition}\label{compact_coKahler}
Let $M$ be a compact manifold endowed with a coK\"ahler structure. Then 
\begin{itemize}
 \item the f{}irst Betti number of $M$ is odd;
 \item the fundamental group of $M$ contains a f{}inite index subgroup of the form $\Gamma\times\bZ$, where $\Gamma$ is the fundamental group of a K\"ahler manifold;
 \item $M$ is formal in the sense of Sullivan.
\end{itemize}
\end{proposition}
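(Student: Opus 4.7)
My plan is to reduce all three statements to known facts about K\"ahler manifolds via the structure theorem for compact coK\"ahler manifolds due to Li \cite{Li} (with earlier incarnations in \cite{CdLM}): every compact coK\"ahler manifold $M^{2n+1}$ is diffeomorphic to the mapping torus $K_\varphi$ of a compact K\"ahler manifold $K$ by a Hermitian isometry $\varphi\colon K\to K$. In particular, $M$ is the total space of a locally trivial fiber bundle $K\to M\to S^1$ with monodromy $\varphi$, and this brings the full force of K\"ahler geometry to bear on $M$.

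For the first item I would apply the Wang exact sequence of this bundle,
\[
\cdots \to H^0(K;\bR) \to H^1(M;\bR) \to H^1(K;\bR) \xrightarrow{\varphi^*-\mathrm{id}} H^1(K;\bR) \to \cdots,
\]
which gives $b_1(M)=1+\dim_\bR H^1(K;\bR)^{\varphi^*}$. Because $\varphi$ is holomorphic, $\varphi^*$ preserves the Hodge decomposition $H^1(K;\bC)=H^{1,0}(K)\oplus H^{0,1}(K)$, and since it is defined over $\bR$ it commutes with complex conjugation; hence $H^1(K;\bR)^{\varphi^*}$ has even real dimension, so $b_1(M)$ is odd. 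For the second item the crucial observation is that $\mathrm{Isom}(K,g_K)$ is a compact Lie group by the Myers-Steenrod theorem, so the closure of $\{\varphi^k\}_{k\in\bZ}$ is a compact abelian subgroup; some power $\varphi^N$ therefore lies in the identity component and is isotopic through isometries to $\mathrm{id}_K$. The $N$-fold cyclic cover of $M$ corresponding to $N\bZ\subset\pi_1(S^1)$ is the mapping torus $K_{\varphi^N}$, which the isotopy identifies diffeomorphically with $K\times S^1$. Consequently $\pi_1(K)\times\bZ=\pi_1(K\times S^1)$ sits as a finite-index subgroup of $\pi_1(M)$, proving the second item with $\Gamma=\pi_1(K)$.

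The cover $K\times S^1$ just produced is already formal, since K\"ahler manifolds are formal by Deligne-Griffiths-Morgan-Sullivan, $S^1$ is formal, and formality is preserved under products. The main obstacle is then to descend formality from this cover to $M$ itself, for formality of non-simply-connected manifolds is in general not inherited from a finite cover. I would handle this following the minimal model approach of \cite{bazzoni_oprea}: build an explicit Sullivan model of $M$ from a minimal model of $K$ twisted by the induced action of $\varphi^*$, choose a formality quasi-isomorphism for $K$ that is $\varphi^*$-equivariant (available by compactness of the closure of $\langle\varphi\rangle$ and an averaging argument on the cdga side), and thereby produce a direct formality quasi-isomorphism for $M$. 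This equivariance step, which depends essentially on $\varphi$ being an isometry rather than an arbitrary diffeomorphism, is the most delicate part of the argument.
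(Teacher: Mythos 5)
The paper offers no proof of this proposition at all: it is stated as a collection of known facts with a pointer to \cite{bazzoni_oprea} and \cite{CdLM}, so there is no internal argument to measure yours against. Your treatment of the first two items is correct and is essentially the argument of the cited sources: Li's structure theorem \cite{Li} realizes $M$ as a K\"ahler mapping torus $K_\varphi$ with $\varphi$ a Hermitian isometry; the Wang sequence gives $b_1(M)=1+\dim H^1(K;\bR)^{\varphi^*}$, and the invariant subspace is even-dimensional because $\varphi^*$ commutes with the complex structure that the Hodge decomposition puts on $H^1(K;\bR)$; and Myers--Steenrod \cite{MS} forces some power $\varphi^N$ into the identity component of the compact group of Hermitian isometries, so that the $N$-fold cyclic cover $K_{\varphi^N}$ of $M$ is diffeomorphic to $K\times S^1$ and $\pi_1(K)\times\bZ$ sits inside $\pi_1(M)$ with index $N$. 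This is exactly the route of \cite[Theorem 6.6]{bazzoni_oprea}, which the paper itself invokes in Section \ref{section:Structure}.

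The step I would not accept as written is the formality argument. You correctly identify that formality of the finite cover $K\times S^1$ does not descend to $M$ for free, and that the remedy is a $\varphi^*$-equivariant formality quasi-isomorphism for $K$; but ``an averaging argument on the cdga side'' cannot produce one, because the average of algebra morphisms is not an algebra morphism: averaging a formality quasi-isomorphism over the closure of $\langle\varphi\rangle$ yields a chain map that is no longer multiplicative. The standard fix needs no averaging at all: the Deligne--Griffiths--Morgan--Sullivan zigzag
\[
\Omega^*(K)\hookleftarrow \bigl(\ker d^c,\,d\bigr) \twoheadrightarrow \bigl(\ker d^c/\mathrm{im}\, d^c,\,0\bigr)\cong H^*(K;\bR)
\]
is natural under biholomorphisms, hence already equivariant for the compact group generated by the Hermitian isometry $\varphi$; after replacing $\varphi$ by an isotopic Hermitian isometry of finite order (possible by \cite[Theorem 6.6]{bazzoni_oprea}, and harmless since formality is a diffeomorphism invariant), one writes $M\cong (K\times S^1)/\bZ_N$ and takes $\bZ_N$-invariants of the zigzag, which preserves quasi-isomorphisms in characteristic zero. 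Alternatively, one can simply quote the direct Hodge-theoretic proof of formality for compact coK\"ahler manifolds in \cite{CdLM}, which bypasses the mapping-torus description entirely. With the averaging mechanism replaced by one of these, your proof is complete.
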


\begin{remark}\label{first_Betti_number}
A compact manifold $M$ endowed with a cosymplectic structure must have $b_1(M)\geq 1$. Indeed, the 1-form $\eta$ is closed; if it was exact, $\eta=df$, then it should
vanish at some point $p\in M$, because of compactness. But this is impossible, since $\eta(\xi)=1$ implies that $\eta$ is nowhere vanishing. Hence $[\eta]\neq 0$ in $H^1(M;\bR)$. In particular, a
compact cosymplectic manifold is never simply connected. Apart from this, the f{}irst Betti number of a compact cosymplectic manifold is not constrained; for instance in \cite[Section 5]{BFM}, the
authors study the geography of compact cosymplectic manifolds and show that for every pair $(k,b)$ with $k\geq 2$ and $b\geq 1$, there exists a compact cosymplectic non-formal manifold of dimension
$2k+1$ with $b_1=b$.
\end{remark}

The next result allows us to construct many K-cosymplectic manifolds. We need to recall two notions. First, given a topological space $X$ and a homeomorphism $\varphi\colon X\to X$, the
\textbf{mapping torus} $X_\varphi$ is by def{}inition the quotient space
\[
X_\varphi =\frac{X\times [0,1]}{(x,0)\sim (\varphi(x),1)};
\]
the projection onto the second factor endows $X_\varphi$ with the structure of a f{}ibre bundle with base $S^1$ and f{}ibre $X$.

An \textbf{almost K\"ahler} manifold is a triple $(K,\tau,h)$ where $(K,\tau)$ is a symplectic manifold and $h$ is a Riemannian metric such that the tensor $J\colon TK\to TK$ def{}ined by the formula
\[
h(X,JY)=\tau(X,Y)
\]
satisf{}ies $J^2=-\Id$.
\begin{proposition}\label{almost_Kahler_mapping_torus}
 Let $(K,\tau,h)$ be an almost K\"ahler manifold and let $\varphi\colon K\to K$ be a dif{}feomorphism such that $\varphi^*\tau=\tau$ and $\varphi^*h=h$. Then the mapping torus $K_\varphi$ carries a
natural K-cosymplectic structure.
\end{proposition}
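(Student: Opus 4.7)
The plan is to build all the ingredients of a K-cosymplectic structure on the product $K\times\bR$ in such a way that they are invariant under the $\bZ$-action generated by $(x,t)\mapsto(\varphi(x),t+1)$, and then observe that they descend to the mapping torus $K_\varphi$.

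First I would introduce, on $K\times\bR$, the following data: the $1$-form $\eta=dt$ and the Reeb vector field $\xi=\partial/\partial t$; the pullback $\omega$ of $\tau$ from the first factor; the almost complex structure $J$ on $K$ determined by $h(X,JY)=\tau(X,Y)$, extended to a tensor $\phi$ on $T(K\times\bR)$ by $\phi|_{TK}=J$ and $\phi(\xi)=0$; and the product metric $g=h+dt\ox dt$. Each of these objects is obviously invariant under $\bR$-translation in the second factor, and $\varphi^*\tau=\tau$, $\varphi^*h=h$ together with the uniqueness of $J$ from the pair $(\tau,h)$ give $\varphi^*J=J$, so all four descend to the mapping torus.

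Next I would verify the axioms on $K\times\bR$ (the verification passes to $K_\varphi$ unchanged). The identities $\eta(\xi)=1$, $\phi^2=-\Id+\eta\ox\xi$, $\imath_\xi\omega=0$ and $g(\phi X,\phi Y)=g(X,Y)-\eta(X)\eta(Y)$ are immediate from the splitting $T(K\times\bR)=TK\oplus\bR\xi$ and the almost Kähler compatibility on $K$; similarly $\omega(X,Y)=g(X,\phi Y)$ reduces on $TK$ to $\tau(X,Y)=h(X,JY)$ and is zero whenever $X$ or $Y$ equals $\xi$. Since $\eta$ and $\omega$ are closed by construction, and $\eta\wedge\omega^n$ restricts to $dt\wedge\tau^n$, which is a volume form because $\tau$ is symplectic, we obtain a cosymplectic almost contact metric structure on $K_\varphi$.

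F{}inally, to check the K-cosymplectic condition I would note that $g$ is a product metric that is manifestly invariant under the flow of $\xi$, so $L_\xi g=0$ and $\xi$ is Killing. By the very definition, $(\eta,\xi,\phi,g)$ is then K-cosymplectic on $K_\varphi$.

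I do not expect a real obstacle: the only point one has to be careful about is that $\varphi$ preserves $J$, which follows automatically from $\varphi^*\tau=\tau$ and $\varphi^*h=h$ because $J$ is uniquely determined by the relation $h(X,JY)=\tau(X,Y)$; without this observation the tensor $\phi$ would not descend to $K_\varphi$.
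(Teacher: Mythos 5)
Your proof is correct and follows essentially the same route as the paper: both construct $\eta$, $\omega$, $\phi$, $g$ from the product data $dt$, $\tau$, $J$, $h+dt\ox dt$ and use $\varphi^*\tau=\tau$, $\varphi^*h=h$ to make them global on $K_\varphi$, the only presentational difference being that you descend from the $\bZ$-cover $K\times\bR$ while the paper glues local trivializations $I\times K$ over $S^1$. Your explicit remark that $\varphi^*J=J$ follows from the uniqueness of $J$ given $(\tau,h)$ is a point the paper leaves implicit, and it is a worthwhile addition.
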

\begin{proof}
 Let $p\colon K_\varphi\to S^1$ denote the natural projection. Let $d\theta$ be the angular form on $S^1$ and set $\eta=p^*(d\theta)$. Consider the projection $\pi\colon K\times [0,1]\to K$ and use it to pull back the symplectic form 
 $\tau$ to $K\times [0,1]$. Since $\varphi^*\tau=\tau$, this gives a 2-form $\omega\in\Omega^2(K_\varphi)$ which is closed and satisf{}ies $\omega^n\neq 0$, where $2n=\dim K$. It is clear that
$\eta\wedge\omega^n\neq 0$. 
 Def{}ine $\xi$ by $\imath_\xi\omega=0$ and $\eta(\xi)=1$. Then $\xi$ projects to the tangent vector to $S^1$. Consider an interval $I\subset S^1$ such that $p^{-1}(I)\cong I\times K$. Endow $I\times
K$ with the product metric 
 $(dt)^2+h$. Also, def{}ine $\phi\colon T(I\times K)\to T(I\times K)$ by $\phi X=JX$ for $X\in\fX(K)$ and $\phi\xi=0$. Then $\xi$ has length 1 and satisif{}ies (globally) $\phi\xi=0$.
 Since $\varphi^*h=h$ and $\varphi^*\tau=\tau$, we can glue the local metric and the local tensor $\phi$ to global objects $g$ and $\phi$. By construction, $\xi$ is Killing for the metric $g$.
 The structure $(\eta,\xi,\phi,g)$ is K-cosymplectic.
 \end{proof}
 
 \begin{remark}\label{sympl_mapping_torus}
Similar to Proposition \ref{almost_Kahler_mapping_torus} one has (see \cite[Lemmata 1 and 4]{Li}):
\begin{itemize}
\item  if $(K,\tau)$ is a symplectic manifold and $\varphi\colon K\to K$ is a symplectomorphism, then $K_\varphi$ carries a natural cosymplectic structure;
\item  if $(K,\tau,h)$ is a K\"ahler manifold and $\varphi\colon K\to K$ is a Hermitian isometry, then $K_\varphi$ carries a natural coK\"ahler structure.
\end{itemize}
Recall that a Hermitian isometry of a K\"ahler manifold is a biholomorphism of the underlying complex 
manifold which is also a Riemannian isometry, hence a symplectomorphism. In all such cases, if $p\colon K_\varphi\to S^1$ denotes the mapping torus projection, the 1-form $\eta$ on $K_\varphi$ is
simply the pullback of the angular form $d\theta$ on $S^1$, hence $[\eta]\in H^1(K_\varphi;\bZ)$.
\end{remark}
 
\begin{corollary}\label{product}
 Let $(K,\tau,h)$ be an almost K\"ahler manifold. Then $M=K\times S^1$ admits a natural K-cosymplectic structure.
\end{corollary}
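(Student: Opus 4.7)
The plan is simply to apply Proposition~\ref{almost_Kahler_mapping_torus} with $\varphi = \mathrm{Id}_K$. The identity diffeomorphism trivially satisfies $\varphi^*\tau = \tau$ and $\varphi^*h = h$, and the mapping torus $K_{\mathrm{Id}}$ is canonically diffeomorphic to $K \times S^1$ (the equivalence relation $(x,0)\sim(x,1)$ just glues the endpoints of the interval into a circle). Thus the K-cosymplectic structure produced by Proposition~\ref{almost_Kahler_mapping_torus} transports directly to $M = K \times S^1$.

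For concreteness, and to make the word ``natural'' in the statement precise, I would also write down the resulting structure explicitly: denoting by $\pi\colon K\times S^1 \to K$ and $p\colon K\times S^1 \to S^1$ the two projections, set
\[
\eta = p^*(d\theta), \qquad \omega = \pi^*\tau, \qquad g = \pi^* h + \eta\otimes\eta,
\]
with Reeb field $\xi = \partial/\partial\theta$ (the lift of the standard vector field on $S^1$) and tensor $\phi$ defined by $\phi\xi = 0$ and $\phi|_{TK} = J$, where $J$ is the almost complex structure associated with $(\tau,h)$. One verifies immediately that $\eta$ and $\omega$ are closed, that $\eta\wedge\omega^n$ is a volume form, and that $(\eta,\xi,\phi,g)$ is an almost contact metric structure; since $\xi$ is a Killing vector field for the product metric (indeed it is parallel), the structure is K-cosymplectic.

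There is essentially no obstacle here: all the nontrivial analytic work (gluing the local model across the monodromy, checking the almost contact metric identities) was carried out in the proof of Proposition~\ref{almost_Kahler_mapping_torus}, and in the product case the monodromy is trivial so no gluing is needed. The only minor point worth mentioning is that we do not require $(K,\tau,h)$ to be Kähler — $J$ need not be integrable — so the resulting K-cosymplectic structure on $K\times S^1$ is generally \emph{not} coKähler, in accordance with Proposition~\ref{compact_coKahler} and the discussion of genuine examples beyond the coKähler case promised in the introduction.
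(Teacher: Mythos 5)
Your proof is correct and is exactly the paper's argument: the paper's entire proof reads ``$K\times S^1$ is the mapping torus of the identity,'' i.e.\ apply Proposition~\ref{almost_Kahler_mapping_torus} with $\varphi=\mathrm{Id}_K$. Your explicit description of the product structure $(\eta,\omega,g,\phi)$ is a harmless (and accurate) elaboration of what that proposition produces in the trivial-monodromy case.
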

\begin{proof}
 $K\times S^1$ is the mapping torus of the identity.
\end{proof}

\begin{example}\label{non_formal} The Kodaira-Thurston manifold $(KT,\omega)$ is a compact symplectic manifold with $b_1(KT)=3$ (see \cite[Example 2.1]{OT}); in particular, $KT$ is not K\"ahler. One
can choose a Riemannian metric $g$ on $KT$ in such a way that $(KT,\omega,g)$ is an almost K\"ahler manifold. Set $M=KT\times S^1$. Then $M$ is a compact K-cosymplectic manifold with $b_1(M)=4$,
hence $M$ is not coK\"ahler.
\end{example}

\begin{example}
By Remark \ref{first_Betti_number}, the f{}irst Betti number of a compact cosymplectic manifold is $\geq 1$. We give an example of a manifold $M$ with
$b_1(M)=1$, endowed with a K-cosymplectic structure which is not coK\"ahler. Let $N$ be the compact, 8-dimensional simply connected symplectic non-formal manifold constructed in \cite{FM}. Let
$\omega$ denote the symplectic form on $N$. Endow $N$ with a Riemannian metric $\bar{g}$ adapted to $\omega$; then 
$(N,\omega,\bar{g})$ is almost K\"ahler and $M=N\times S^1$ is a K-cosymplectic manifold with $b_1(M)=1$. Notice that $M$ is non-formal because $N$ is, hence $M$ is not coK\"ahler.
\end{example}

More generally, using the same argument can prove the following proposition:
\begin{proposition}
Let $(K,\omega)$ be a compact symplectic non-formal manifold. Endow $M=K\times S^1$ with the natural product K-cosymplectic structure. Then $M$ is K-cosymplectic but not coK\"ahler.
\end{proposition}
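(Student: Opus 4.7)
The plan is to split the statement into its two assertions and attack them separately. For the K-cosymplectic part, I would equip the symplectic manifold $(K,\omega)$ with a Riemannian metric $h$ compatible with $\omega$; such a metric exists by the standard polar decomposition argument already used in the proof of Proposition \ref{equivalence:1}, so that $(K,\omega,h)$ becomes an almost K\"ahler manifold. Corollary \ref{product} then directly produces a natural K-cosymplectic structure on $M=K\times S^1$.

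For the non-coK\"ahler assertion, I would invoke Proposition \ref{compact_coKahler}: every compact coK\"ahler manifold is formal in the sense of Sullivan. Hence it suffices to prove that $M$ fails to be formal. Since $K$ is non-formal, the Deligne--Griffiths--Morgan--Sullivan criterion provides a non-trivial triple Massey product $\langle a,b,c\rangle$ in $H^*(K;\bR)$.

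I would then transfer this Massey product to $M$ through the projection $\pi\colon M\to K$. This projection admits an obvious section $s\colon K\to M$, $k\mapsto (k,e_0)$ for any basepoint $e_0\in S^1$, so $\pi^*$ is injective on cohomology and $s^*\pi^*=\Id$. By naturality of Massey products, $\pi^*\langle a,b,c\rangle$ is contained in $\langle \pi^*a,\pi^*b,\pi^*c\rangle$ on $M$. I would argue this latter Massey product is still non-trivial: if some representative $\pi^*z$ lay in the indeterminacy $\pi^*a\cdot H^*(M)+\pi^*c\cdot H^*(M)$, applying $s^*$ would yield $z\in a\cdot H^*(K)+c\cdot H^*(K)$, contradicting the non-triviality of $\langle a,b,c\rangle$. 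Therefore $M$ is non-formal, and consequently not coK\"ahler.

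The main conceptual step is the transfer of non-formality from $K$ to $K\times S^1$; no serious obstacle is expected, since the argument reduces to the naturality of Massey products combined with the existence of a section. A more abstract alternative would be to note that the minimal Sullivan model of $M$ is the tensor product of the minimal models of $K$ and $S^1$, and deduce that formality of $M$ would force formality of $K$; but the Massey product route has the advantage of being self-contained and only requires tools already implicit in the paper's framework.
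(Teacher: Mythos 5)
Your overall strategy is the same as the paper's: the paper proves this proposition simply by referring back to the preceding example, i.e.\ choose a metric adapted to $\omega$ so that $(K,\omega,h)$ is almost K\"ahler, apply Corollary \ref{product} to obtain the product K-cosymplectic structure on $K\times S^1$, and then conclude from the formality statement in Proposition \ref{compact_coKahler}, asserting without further justification that $M$ is non-formal because $K$ is. Your treatment of the K-cosymplectic half is exactly this, and is fine.

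There is, however, a genuine gap in the way you transfer non-formality from $K$ to $M$. The Deligne--Griffiths--Morgan--Sullivan theorem says that formality forces the uniform vanishing of all Massey products; it does \emph{not} say that a non-formal space must carry a non-trivial triple Massey product. There exist non-formal spaces all of whose Massey products vanish (the obstructions to formality in the sense of Halperin--Stasheff are not exhausted by Massey products), so the sentence ``since $K$ is non-formal, the DGMS criterion provides a non-trivial triple Massey product $\langle a,b,c\rangle$'' reverses a one-way implication and fails for a general compact symplectic non-formal $K$. Your section-and-naturality argument is correct \emph{once} such a product exists, but you cannot guarantee that it does. The remedy is precisely the ``more abstract alternative'' you mention only in passing: the minimal model of $K\times S^1$ is $\mathcal{M}_K\otimes(\Lambda(t),0)$, and since $K$ is a retract of $K\times S^1$ via your section $s$ and the projection $\pi$, formality of $M$ forces formality of $K$ (equivalently, a product is formal if and only if each factor is). That argument should be promoted from an aside to the main line of the proof; with this change your proof is complete and agrees with the paper's intended argument.
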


Further examples of compact K-cosymplectic non coK\"ahler manifolds can be constructed using the fact that, by a result of Gompf (see \cite{Gompf}), every f{}initely presentable group is the
fundamental group of a symplectic 4-manifold.
By applying Corollary \ref{product}, we can obtain K-cosymplectic manifolds with fundamental group $\Gamma\times\bZ$, where $\Gamma$ is any f{}initely presentable group. But we have noticed above that
the fundamental group of a compact coK\"ahler manifold can not be arbitrary.

\section{Regular K-cosymplectic structures}\label{section:Structure}

Let $(\eta,\xi,\phi,g)$ be an almost contact metric structure on $M$. Consider the distribution $\ker \eta$. By the Frobenius Theorem, a distribution integrates to a foliation $\cF$ if and only if
the integrability condition $\eta\wedge d\eta=0$ is satisf{}ied. As we already observed, if $(\eta,\xi,\phi,g)$ is cosymplectic then $d\eta=0$, hence $\eta\wedge d\eta=0$. As a consequence, if
$(\eta,\xi,\phi,g)$ is cosymplectic, $\ker\eta$ integrates to a foliation of codimension 1 on $M$, which we call \textbf{vertical foliation}. In the case of a cosymplectic manifold $(M,\eta,\omega)$, 
we def{}ine the vertical foliation again as $\ker\eta$.

For codimension 1 foliations (not necessarily given as the kernel of a 1-form), we use the following notion of regularity.

\begin{definition}
 Let $M$ be a smooth $n-$dimensional manifold endowed with a codimension-1 foliation $\cF$. Then $\cF$ is \textbf{regular} if every point $p\in M$ has a cubic coordinate neighborhood $U$, 
 with coordinates $(x^1,\ldots,x^n )$, such that $x^i(p)=0$ for every $i$ and such that the leaf $L_p$ of $\cF$ passing through $p$ is given by the 
 equation $x^1=0$.
\end{definition}

There are two simple situations in which a codimension 1 foliation is regular:

\begin{example}\label{Tischler}
 Suppose that $M$ is a compact manifold of dimension $n$ endowed with a closed and nowhere vanishing 1-form $\sigma$. This implies that $[\sigma]\neq 0\in H^1(M;\bR)$ and that $\cF=\ker\sigma$ is a foliation. 
 Assume further that $[\sigma]$ lies in $H^1(M;\bZ)\subset H^1(M;\bR)$.
 By a result of Tischler (see \cite{Ti}), $M$ f{}ibres over the circle; the map $p\colon M\to S^1$ is given precisely by $[\sigma]$ under the usual correspondence
$H^1(M;\bZ)\cong[M,K(\bZ,1)]=[M,S^1]$.
 In particular, $\sigma=p^*(d\theta)$, 
 where $d\theta$ is the angular form on $S^1$. One can also show that $M$ is, in fact, the mapping torus $N_\varphi$ of a dif{}feomorphism $\varphi\colon N\to N$, where $N$ is a smooth, compact
manifold of dimension $n-1$. In this case, the 
 f{}ibres of the projection $p\colon M=N_\varphi\to S^1$ coincide with the leaves of the foliation $\cF$, which is therefore regular.
\end{example} 

Tischler's argument also works when $[\sigma]\notin H^1(M;\bZ)$. 
In this case one needs f{}irst to perturb $\sigma$ to an element in $H^1(M;\bQ)$ and then scale it to an element of $H^1(M;\bZ)$. Tischler's result, then, says that 
\emph{every} compact manifold $M$ endowed with a closed and nowhere vanishing 1-form $\sigma$ f{}ibres over the circle and is a mapping torus (see \cite{MT}).
 
A second situation in which a codimension 1 foliation is regular is the following:
 
 \begin{example}\label{compactness}
 Let $M^n$ be a compact manifold endowed with a closed and nowhere vanishing 1-form $\sigma$, so that $\cF=\ker\sigma$ is a foliation on $M$. Assume that $\cF$ has at least one compact leaf $L$. It is
shown in \cite{GMP} that 
 $M$ is the mapping torus of a dif{}feomorphism $\varphi\colon L\to L$. The f{}ibres of the mapping torus projection $M=L_\varphi\to S^1$ are dif{}feomorphic to $L$, and the local triviality of the
f{}ibration tells us that 
 we are in the regular case. 
\end{example}

We consider now the characteristic foliation, for which there is a regularity notion as well, and use it to prove a structure result for compact K-cosymplectic manifolds, 
which mimicks the contact case (see \cite{Boyer_Galicki}).

\begin{definition}
Let $(\eta,\xi,\phi,g)$ be an almost contact metric structure on $M$. The \textbf{characteristic foliation} is the 1-dimensional foliation $\cF_\xi$ whose leaves are given by the flow lines of $\xi$.
\end{definition}

\begin{remark}
 The characteristic foliation makes sense for cosymplectic manifolds $(M,\eta,\omega)$ as well, since the Reeb f{}ield $\xi$ is uniquely def{}ined by $\imath_\xi\omega=0$ and $\eta(\xi)=1$.
\end{remark}

\begin{definition}
Let $(\eta,\xi,\phi,g)$ be an almost contact metric structure and let $\cF_\xi$ be the characteristic foliation. The structure (or the foliation) is called \textbf{quasi-regular} 
if there exists a positive integer $k$ such that each point $p\in M$ has a 
neighborhood $U$ with the following property: any integral curve of $\xi$ intersects $U$ at most $k$ times. If $k=1$, the structure is called \textbf{regular}. We use the term \textbf{irregular} for
the non quasi-regular case.
\end{definition}

In Proposition \ref{prop:exquasireg} below we will show that if a compact manifold admits a K-cosymplectic structure, then it also admits a quasi-regular one.

The regularity of each foliation is independent from the regularity of the other, as the following two examples show.
\begin{example}
We consider a compact symplectic mapping torus $N_\varphi$, such that the symplectomorphism $\varphi\colon N\to N$ has an inf{}inite orbit. For example, take $N=T^2$ and $\varphi\colon T^2\to T^2$ to
be
the symplectomorphism covered by the linear map $\tilde{\varphi}\colon\bR^2\to\bR^2$ given by
\[
 \begin{pmatrix}
  2 & 1\\
  1 & 1
 \end{pmatrix}.
\]
The mapping torus $T^2_\varphi$ admits a cosymplectic strucure $(\eta,\xi,\phi,g)$ with $[\eta]\in H^1(T^2_\varphi;\bZ)$ (compare with Remark \ref{sympl_mapping_torus}).
Hence the vertical foliation is regular, according to Example \ref{Tischler}. The matrix $\tilde{\varphi}$
has inf{}inite order in the group of symplectomorphisms of $T^2$. Therefore, in $T^2_\varphi$, the characteristic foliation intersects each f{}ibre of the mapping torus f{}ibration an inf{}inite
number of times, hence it is not regular.
\end{example}

\begin{example}\label{ex:typeIIdeformation} We can also construct examples of compact cosymplectic manifolds for which the characteristic foliation is regular,
but the vertical foliation is not. To see this, consider the following analogue of a {\bf deformation of type II} in the Sasakian setting (see \cite{Boyer_Galicki}, p.\ 240).

Let $(M,\eta,\omega)$ be a cosymplectic manifold of dimension $2n+1$, with Reeb f{}ield $\xi$, and let $\beta\in \Omega^1(M,\cF_\xi)$ be an arbitrary closed basic $1$-form (i.e., $d\beta=0$ and
$\imath_\xi\beta=0$, see Section \ref{section:forms} below). 
Then $\eta'=\eta+\beta$ is again a closed $1$-form on $M$. Because $\imath_\xi\beta=0$ and
$\imath_\xi\omega=0$, we have $\beta\wedge \omega^n=0$, and hence $\eta'\wedge \omega^n=\eta\wedge\omega^n$ is a volume form on $M$. Thus, $(M,\eta',\omega)$ is again cosymplectic. The Reeb f{}ield of
this new cosymplectic manifold is equal to the original Reeb f{}ield $\xi$, but the vertical foliation has changed.

For an explicit example, take $T^3=T^2\times T^1$; let $\langle X_1,X_2,X_3\rangle$ be a basis of $\ft$ and let $\langle x_1,x_2,x_3\rangle $ be the dual basis. Set $\eta=x_3$
and $\omega=x_1\wedge x_2$. Let $g$ be the left-invariant Riemannian metric on $T^3$ which makes $\langle X_1,X_2,X_3\rangle$ orthonormal. Hence $(T^3,\eta,\omega)$ is a K-cosymplectic manifold. In
this case both the vertical and the characteristic foliation are regular. Since $\langle x_1,x_2,x_3\rangle$ is a basis of $H^1(T^3;\bR)$, any closed 1-form $\beta$ on $T^3$ can be written uniquely as
$\beta=\sum_{i=1}^3a_ix_i$ for some $a_i\in\bR$. $\beta$ is basic if and only if $a_3=0$. Choose $a_1,a_2$ to be algebraically independent over $\bQ$, set $\beta=a_1x_1+a_2x_2$ and $\eta'=\eta+\beta$.
Then $\eta'\notin\ H^1(T^3;\bZ)$; the perturbed structure $(T^3,\eta',\omega)$ is again K-cosymplectic, by Proposition \ref{Yamazaki}. The Reeb f{}ield has not changed, hence the characteristic
foliation is still regular; however, the vertical foliation is now irregular (it is dense in $T^3$). Compare also with Example \ref{irregular} below.
\end{example}

There is, however, a situation in which the regularity of the vertical foliation is related to that of the characteristic foliation. Let $K$ be a compact K\"ahler manifold,
let $\varphi\colon K\to K$ be a Hermitian isometry and let $K_\varphi$ be the corresponding mapping torus. Let $(\eta,\xi,\phi,g)$ be the natural
coK\"ahler structure on $K_\varphi$. Then $[\eta]\in H^1(K_\varphi;\bZ)$ (see again Remark \ref{sympl_mapping_torus}) and the vertical foliation is regular. It is proved in \cite[Theorem
6.6]{bazzoni_oprea} that such $\varphi$ has f{}inite order in the group of Hermitian isometries of $K$ (modulo the connected component of the identity). Therefore, in this specif{}ic case, the orbit
of $\varphi$ intersects each f{}ibre only a f{}inite number of times. Hence both the vertical and the characteristic foliation are regular.

We are almost ready for our structure result. In order to prove it, we need the following theorem, which is standard in almost contact metric geometry:
\begin{theorem}{\cite[Theorem 6.3.8]{Boyer_Galicki}}\label{Boyer-Galicki}
Let $(M,\xi,\eta,\phi)$ be an almost contact manifold such that the leaves of the characteristic foliation are all compact. Suppose also that $(M,\xi,\eta,\phi)$ admits a compatible Riemannian metric
$g$ such that $\xi$ is a Killing f{}ield which leaves $\phi$ invariant. Then the space of leaves $M/\cF_\xi$ has the structure of an almost Hermitian orbifold such that the canonical
projection $\pi\colon M\to M/\cF_\xi$ is an orbifold Riemannian 
submersion and a principal $S^1$ V-bundle over $M/\cF_\xi$ with connection 1-form $\eta$.
\end{theorem}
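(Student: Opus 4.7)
My plan is first to promote the flow of $\xi$ to a smooth effective $S^1$-action on $M$. Each integral curve of $\xi$ is a 1-dimensional compact connected submanifold, hence a circle, so the flow $\psi_t$ is periodic on each orbit with a well-defined minimal period $T(p) > 0$. Since $\xi$ is Killing, $\psi_t$ acts by isometries, and on a Riemannian tubular neighborhood of any orbit $\mathcal{O}$ the flow linearises to an orthogonal representation on the normal slice. From this linearisation the period function takes the form $T(p) = T_{\mathcal{O}}/k$ for a locally constant integer $k \geq 1$, so after fixing a common multiple on each connected component the flow descends to an effective $S^1$-action whose isotropy $\Gamma_p$ at every point is a finite cyclic subgroup of $S^1$.

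Second, I would build the orbifold structure on $Q := M/\cF_\xi$ via the equivariant slice theorem for isometric actions: for each $p \in M$, choose a small $\Gamma_p$-invariant Riemannian slice $D_p \subset M$ through $p$, transverse to the orbit, so that a tubular neighborhood of $S^1 \cdot p$ is $S^1$-equivariantly diffeomorphic to $S^1 \times_{\Gamma_p} D_p$. The charts $D_p/\Gamma_p \hookrightarrow Q$ then form an orbifold atlas on $Q$, compatibility of overlapping charts being a standard consequence of the equivariance of the slice construction.

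Third, I would descend the transverse structures. The identity $\eta(X) = g(X,\xi)$ together with $\xi$ being Killing yields $L_\xi \eta = 0$, so $\ker\eta = \xi^\perp$ is $S^1$-invariant; the compatibility relation $g(\phi X,\phi Y) = g(X,Y) - \eta(X)\eta(Y)$ makes $J := \phi|_{\ker\eta}$ a $g$-orthogonal almost complex structure. By $L_\xi \phi = 0$ (hypothesis) and $L_\xi g = 0$, both $J$ and $g|_{\ker\eta}$ are $S^1$-invariant, hence descend to $\Gamma_p$-invariant tensors on each slice $D_p$; this produces an almost Hermitian orbifold structure on $Q$ with respect to which $\pi$ is an orbifold Riemannian submersion. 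Finally, since the $S^1$-action has only finite stabilisers and the local model $S^1 \times_{\Gamma_p} D_p$ is precisely a principal $S^1$ V-bundle chart, and since $\eta$ is $S^1$-invariant with $\eta(\xi) = 1$, the projection $\pi$ is a principal $S^1$ V-bundle with connection 1-form $\eta$.

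The main obstacle is the first step — ensuring that the pointwise periods assemble into a genuine $S^1$-action without pathological behaviour — and this is exactly where both the Killing hypothesis and the compactness of each orbit are needed, through the linearisation of a Killing flow at a closed orbit. Once the $S^1$-action is in hand, every subsequent step is a standard application of equivariant Riemannian geometry and of the correspondence between proper $S^1$-actions with finite stabilisers and principal $S^1$ V-bundles.
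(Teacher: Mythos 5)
The paper does not prove this statement: it is quoted from Boyer--Galicki \cite[Theorem 6.3.8]{Boyer_Galicki} and used as a black box, so there is no internal proof to compare yours against. Your sketch is, in substance, the standard argument (essentially the one in that reference): linearize the isometric flow at a closed orbit, deduce local commensurability of the periods, reparametrize to obtain a locally free $S^1$-action, and then use the slice theorem to build the orbifold charts and push down the $S^1$-invariant transverse tensors $g|_{\ker\eta}$ and $J=\phi|_{\ker\eta}$, together with $\eta$ as connection form. The outline is sound, and you correctly identify the first step as the only genuinely delicate one --- Sullivan's examples of circle foliations with unbounded period function show that compactness of all leaves alone would not suffice.

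Two points in that first step should be made explicit. First, the reason the linearization helps is that the return map $A=d\psi_{T_0}|_{\nu_{p_0}}$ is an orthogonal transformation of the normal space all of whose orbits are finite (every nearby leaf is closed); an orthogonal map with this property has all rotation angles rational multiples of $2\pi$, hence finite order $N$, so $\psi_{NT_0}=\mathrm{Id}$ on a neighbourhood of the orbit and all nearby periods divide $NT_0$. You assert the conclusion of the linearization but omit this finite-order argument, which is precisely where the hypothesis that \emph{all} leaves are compact enters. Second, your formula $T(p)=T_{\mathcal{O}}/k$ with $k$ ``locally constant'' is not quite right: the integer $k$ jumps on exceptional orbits, and what is actually locally constant is the local common period $c(p)=\min\{c>0 \mid \psi_c=\mathrm{Id}\ \textrm{near}\ p\}$; its constancy on connected components is what produces the $S^1$-action (your $\phi$-invariance hypothesis is also quietly useful here, since it forces the exceptional set to have even codimension). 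With these repairs the remaining steps --- slice theorem, descent of $g$, $J$ and $\eta$ --- are routine, up to the harmless rescaling of $\eta$ against the period of the circle so that it evaluates to $1$ on the generator of the action.
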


We refer to \cite[Chapter 4]{Boyer_Galicki}, and the references therein, for all the relevant def{}initions about orbifolds and V-bundles. However, we apply Theorem \ref{Boyer-Galicki} to the case of
a compact manifold $M$ endowed 
with a \emph{regular} K-cosymplectic structure, so that the space of leaves will be a manifold, and we can forget about orbifolds. We obtain:
\begin{theorem}\label{structure_2}
Let $M$ be a compact manifold endowed with a K-cosymplectic structure $(\xi,\eta,\phi,g)$. Assume that the characteristic foliation $\cF_\xi$ is regular.
Then the space of leaves $M/\cF_\xi$ has the structure of an almost K\"ahler manifold 
such that the canonical projection $\pi\colon M\to M/\cF_\xi$ is a Riemannian submersion and a principal, flat $S^1$-bundle with connection 1-form $\eta$.
\end{theorem}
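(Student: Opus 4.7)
The plan is to reduce the statement to the Boyer--Galicki theorem (Theorem \ref{Boyer-Galicki}) cited just above, and then refine the output in two ways: upgrade the almost Hermitian \emph{orbifold} to an almost K\"ahler \emph{manifold}, and observe that flatness of the $S^1$-bundle is built into the cosymplectic condition $d\eta=0$.

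First I would check that the hypotheses of Theorem \ref{Boyer-Galicki} are satisfied. Regularity of $\cF_\xi$ on the compact manifold $M$ implies that the quotient $M/\cF_\xi$ is Hausdorff and that the quotient map $\pi$ is proper, so each leaf (the $\pi$-preimage of a point) is compact; since $\xi$ is nowhere zero, every leaf is a circle. The remaining hypotheses are immediate from the K-cosymplectic assumption: $\xi$ is Killing by definition, and it preserves $\phi$ because $L_\xi\phi=N^{(3)}=0$ by Corollary \ref{characterization}. Thus Theorem \ref{Boyer-Galicki} applies and yields an almost Hermitian orbifold structure on $M/\cF_\xi$ together with an orbifold Riemannian submersion $\pi\colon M\to M/\cF_\xi$ which is a principal $S^1$ V-bundle with connection 1-form $\eta$. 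In the regular case every leaf holonomy is trivial (each point has a flowbox met only once by each nearby leaf), so the V-bundle is a genuine smooth principal $S^1$-bundle over the smooth manifold $M/\cF_\xi$.

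Next I would identify the fundamental 2-form of the induced almost Hermitian structure with the descent of $\omega$, in order to promote almost Hermitian to almost K\"ahler. The 2-form $\omega$ is basic with respect to $\cF_\xi$: one has $\imath_\xi\omega=0$ by the definition of the Reeb field, and $L_\xi\omega=d\imath_\xi\omega+\imath_\xi d\omega=0$ since $d\omega=0$. Hence there is a unique 2-form $\bar\omega$ on $M/\cF_\xi$ with $\pi^*\bar\omega=\omega$, and $d\bar\omega=0$ follows from $d\omega=0$ and the injectivity of $\pi^*$ on basic forms. Because the almost complex structure on $M/\cF_\xi$ produced by Theorem \ref{Boyer-Galicki} is the one induced by the restriction of $\phi$ to $\ker\eta$, and because $\pi$ is a Riemannian submersion, the relation $\omega(X,Y)=g(X,\phi Y)$ on horizontal vectors shows that $\bar\omega$ is precisely the fundamental 2-form of the almost Hermitian structure on the base. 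Therefore the base is almost K\"ahler.

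Finally, flatness of the bundle: the connection 1-form of the $S^1$-bundle is $\eta$, and its curvature form is the unique 2-form $\Omega$ on $M/\cF_\xi$ with $\pi^*\Omega=d\eta$. Since $d\eta=0$ in the cosymplectic setting, $\Omega=0$ and the bundle is flat. Putting these pieces together gives the claim. The main subtlety, in my view, is not any computation but the careful passage from the orbifold conclusion of Theorem \ref{Boyer-Galicki} to a genuine manifold statement, and the identification of the induced K\"ahler form on the base with $\bar\omega$; once that identification is set up, the cosymplectic conditions $d\eta=0$ and $d\omega=0$ deliver flatness and the almost K\"ahler property essentially for free.
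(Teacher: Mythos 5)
Your proposal is correct and follows essentially the same route as the paper: both reduce to Theorem \ref{Boyer-Galicki}, use regularity to pass from the orbifold/V-bundle conclusion to a genuine manifold and principal $S^1$-bundle, descend $\omega$ (which is basic since $\imath_\xi\omega=0$ and $d\omega=0$) to upgrade almost Hermitian to almost K\"ahler, and read off flatness from $d\eta=0$. The paper's version is terser but contains the same ingredients; your added detail on identifying the descended form with the fundamental 2-form of the base is a reasonable elaboration rather than a different argument.
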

\begin{proof}
First of all, regularity implies that the leaves of $\cF_\xi$ are all closed, hence compact. Thus they are homeomorphic to circles. Since the structure is K-cosymplectic, the Reeb f{}ield $\xi$ is
Killing with respect to $g$ and respects the tensor 
$\phi$ (these two conditions are in fact equivalent). Since $\xi$ also respects the K\"ahler form $\omega$ of the K-cosymplectic structure, the almost Hermitian manifold $M/\cF_\xi$ is indeed almost K\"ahler. $M$ is the total space of a
principal $S^1$-bundle $\pi\colon M\to M/\cF_\xi$, and $\eta$ is a connection 1-form. Since $\eta$ is closed, the bundle is flat.
\end{proof}


If $G$ is a compact Lie group, and $P\to B$ is a principal $G$-bundle over a smooth manifold $B$, it is well-known (see for instance \cite{St}) that $P$ has a connection with zero curvature if and only if $P$ is induced from the 
universal covering $\widetilde{B}\to B$ through a homomorphism $\pi_1(B)\to G$. In other words, flat $G$-bundles are determined by the \emph{monodromy} action of the fundamental group of the base on
$G$. In our case, when $\cF_\xi$ is regular, the flat $S^1$-bundle $M\to M/\cF_\xi$ is determined by a homomorphism $\pi_1(M/\cF_\xi)\to S^1$. Consider the following portion of the long exact sequence
of homotopy groups of the f{}ibration $M\to M/\cF_\xi$:
\begin{equation}\label{les}
\ldots\to \pi_1(S^1)\stackrel{\chi}{\to} \pi_1(M)\to \pi_1(M/\cF_\xi)\to \pi_0(S^1)\to \ldots
\end{equation}
\begin{lemma}
Let $M$ be a compact manifold endowed with a regular K-cosymplectic structure $(\xi,\eta,\phi,g)$. Then the map $\chi\colon \pi_1(S^1)\to\pi_1(M)$ in \eqref{les} is injective.
\end{lemma}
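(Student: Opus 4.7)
The map $\chi$ is induced by the inclusion of a fibre of the principal $S^1$-bundle $\pi\colon M\to M/\cF_\xi$, which (by construction of the characteristic foliation) is precisely a closed orbit of the Reeb flow. Since $\pi_1(S^1)\cong\bZ$ is torsion-free, injectivity of $\chi$ is equivalent to showing that this closed Reeb orbit represents an element of infinite order in $\pi_1(M)$.

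The plan is to detect this using the closed $1$-form $\eta$. Let $\gamma\colon [0,T]\to M$ be a closed integral curve of $\xi$, so $\dot\gamma=\xi\circ\gamma$ and $\gamma(0)=\gamma(T)$. Then
\[
\int_\gamma \eta \;=\; \int_0^T \eta_{\gamma(t)}(\xi_{\gamma(t)})\,dt \;=\; \int_0^T 1\,dt \;=\; T \;>\; 0.
\]
Hence the pairing $\langle [\eta],[\gamma]\rangle$ is nonzero, so $[\gamma]\neq 0$ in $H_1(M;\bR)$.

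I would then conclude by chasing through the sequence of homomorphisms
\[
\pi_1(S^1)\ \stackrel{\chi}{\longrightarrow}\ \pi_1(M)\ \longrightarrow\ H_1(M;\bZ)\ \longrightarrow\ H_1(M;\bR).
\]
A generator of $\pi_1(S^1)\cong\bZ$ maps along this composition to $[\gamma]\in H_1(M;\bR)$, which we have just shown is nonzero. Consequently for any $n\in\bZ\setminus\{0\}$ the image of $n$ in $H_1(M;\bR)$ is $n[\gamma]\neq 0$, so $\chi(n)\neq 1\in\pi_1(M)$. This gives injectivity of $\chi$.

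\textbf{Expected obstacle.} There is no serious obstacle; the only subtle point is verifying that $\chi$ really corresponds to the inclusion of a Reeb orbit as a fibre of the $S^1$-bundle (which follows from the identification in Theorem \ref{structure_2} of the fibres with the leaves of $\cF_\xi$) and that one can legitimately use $\eta$ as a detecting closed form on $M$, rather than on the base. Both are immediate from the setup.
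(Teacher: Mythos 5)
Your proof is correct, and it takes a genuinely more self-contained route than the paper's. The paper argues indirectly: it first invokes Li's theorem that a compact cosymplectic manifold is a mapping torus (so that $\pi_1(M)$ is a semidirect product $\Gamma\rtimes\bZ$ and $H_1(M;\bZ)$ has rank at least one), and then cites the result of \cite{bazzoni_oprea} that the orbit map of the circle action generated by the Reeb flow is injective in homology, concluding injectivity in homotopy. You instead detect the fibre class directly by pairing with the closed form $\eta$: since the fibre of $\pi\colon M\to M/\cF_\xi$ is a closed Reeb orbit $\gamma$ and $\eta(\xi)\equiv 1$, one gets $\int_\gamma\eta=T>0$, so $[\gamma]\neq 0$ in $H_1(M;\bR)$, and since $H_1(M;\bR)$ is torsion-free the composite $\pi_1(S^1)\to\pi_1(M)\to H_1(M;\bR)$ is injective, forcing $\chi$ to be injective. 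This is exactly the mechanism hiding behind the external results the paper quotes, made explicit; it buys you independence from the mapping torus structure theorem and from the homological injectivity result of \cite{bazzoni_oprea}, at the cost of having to note (as you do) that the generator of $\pi_1(S^1)$ maps under $\chi$ to the class of a fibre, i.e.\ of a closed Reeb orbit traversed once --- which is immediate from Theorem \ref{structure_2}. Both arguments pass through real homology and use that injectivity of a composite out of $\bZ$ implies injectivity of the first map, so they are the same in spirit; yours is the cleaner write-up.
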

\begin{proof}
The compactness hypothesis on $M$ ensures that $\pi_1(M)$ always contains a subgroup isomorphic to $\bZ$. Indeed, by work of Li, any compact cosymplectic manifold $M$ is diffeomorphic to a mapping
torus (see \cite{Li}) and this displays the fundamental group of $M$ as a semi-direct product $\Gamma \rtimes \bZ$ (see also \cite{bazzoni_oprea}). This means that $H_1(M;\bZ)$ has rank at least 1.
Since the K-cosymplectic structure is regular, $M$ is endowed with a circle action given precisely by the flow 
of the Reeb f{}ield. In \cite{bazzoni_oprea} it is proved that the orbit map of this action is injective in homology. Therefore, it must also be injective in homotopy.
\end{proof}
When $\pi_1(M)=\bZ$, \eqref{les} becomes $0\to\bZ\stackrel{\chi}{\to}\bZ\to \pi_1(M/\cF_\xi)\to 0$. Hence $\chi$ is multiplication by some f{}ixed integer $k$ and
$\pi_1(M/\cF_\xi)\cong\bZ_k$. 
Therefore, there is only a f{}inite number of principal flat $S^1$-bundles over $M/\cF_\xi$. All such bundles become trivial when lifted to the universal cover of $M/\cF_\xi$.


\section{Dif{}ferential forms on K-cosymplectic manifolds}\label{section:forms}

Let $M$ be a smooth manifold and let $\cF$ be a smooth foliation; we consider the following subalgebras of smooth forms $\Omega^*(M)$:
\begin{itemize}
\item \emph{horizontal forms}: $\Omega^p_{\textrm{hor}}(M)=\{\alpha\in\Omega^p(M) \ | \ \imath_X\alpha=0 \ \forall \ X\in T\cF\}$;
\item \emph{basic forms}: $\Omega^p(M;\cF)=\{\alpha\in\Omega^p(M) \ | \ \imath_X\alpha=0=\imath_X d\alpha \ \forall \ X\in T\cF\}$;
\end{itemize}

The notation $\Omega^*_{\textrm{bas}}(M)$ is also common to indicate basic forms. The exterior dif{}ferential maps basic forms to basic forms, hence $(\Omega^*(M;\cF),d)$ is a dif{}ferential
subalgebra of $\Omega^*(M)$. The corresponding cohomology $H^*(M;\cF)$ is called the {\bf basic cohomology} of $\cF$.

\begin{lemma}\label{splitting}
Let $(M^{2n+1},\eta,\omega)$ be a cosymplectic manifold; let $\xi$ be the Reeb f{}ield and let $\cF_\xi$ be the characteristic foliation. Then 
\[
\Omega^p(M)=\Omega^p_{\mathrm{hor}}(M)\oplus\eta\wedge\Omega^{p-1}_{\mathrm{hor}}(M)
\]
as $C^\infty(M)-$modules, for $1\leq p\leq 2n+1$. Also, $C^\infty(M)=\Omega^0(M)=\Omega^0_{\mathrm{hor}}(M)$.
\end{lemma}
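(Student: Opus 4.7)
The plan is to construct an explicit splitting using the Reeb field $\xi$, which spans $T\cF_\xi$ pointwise, so that "horizontal" simply means $\imath_\xi\alpha=0$. Given any $\alpha\in\Omega^p(M)$ with $p\geq 1$, I would define
\[
\alpha_h := \alpha-\eta\wedge\imath_\xi\alpha, \qquad \alpha_v := \imath_\xi\alpha,
\]
so that tautologically $\alpha=\alpha_h+\eta\wedge\alpha_v$. The first step is to check that both $\alpha_h$ and $\alpha_v$ are horizontal: for $\alpha_v$ this is immediate from $\imath_\xi^2=0$, and for $\alpha_h$ one computes
\[
\imath_\xi\alpha_h = \imath_\xi\alpha - \eta(\xi)\,\imath_\xi\alpha + \eta\wedge\imath_\xi^2\alpha = 0,
\]
using $\eta(\xi)=1$. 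This establishes that every $p$-form lies in $\Omega^p_{\mathrm{hor}}(M)+\eta\wedge\Omega^{p-1}_{\mathrm{hor}}(M)$.

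Next I would verify that the sum is direct. Suppose $\beta+\eta\wedge\gamma=0$ with $\beta\in\Omega^p_{\mathrm{hor}}(M)$ and $\gamma\in\Omega^{p-1}_{\mathrm{hor}}(M)$. Contracting with $\xi$ and using $\imath_\xi\beta=0$, $\imath_\xi\gamma=0$, and $\eta(\xi)=1$ yields $\gamma=0$, whence $\beta=0$. Since the maps $\alpha\mapsto\alpha_h$ and $\alpha\mapsto\alpha_v$ are $C^\infty(M)$-linear (contraction and wedge product with the fixed forms $\eta$, and contraction with $\xi$, are all $C^\infty(M)$-linear operations), the resulting isomorphism is one of $C^\infty(M)$-modules. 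The final clause is vacuous: the horizontality condition $\imath_X\alpha=0$ for $X\in T\cF_\xi$ is empty when $\alpha$ is a function, so $\Omega^0_{\mathrm{hor}}(M)=\Omega^0(M)=C^\infty(M)$.

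There is really no serious obstacle here; the only subtlety worth flagging is that the argument depends solely on the existence of a global nowhere-vanishing $1$-form $\eta$ and a vector field $\xi$ with $\eta(\xi)=1$, so neither the closedness of $\eta$ and $\omega$ nor any metric data enters the proof. In particular, the lemma actually holds for arbitrary almost contact structures; the cosymplectic hypothesis will only become relevant in subsequent statements, where one uses $d\eta=0$ and $d\omega=0$ to control how $d$ interacts with this splitting.
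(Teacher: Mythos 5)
Your proof is correct and follows essentially the same route as the paper: the explicit decomposition $\alpha=(\alpha-\eta\wedge\imath_\xi\alpha)+\eta\wedge\imath_\xi\alpha$, followed by contraction with $\xi$ to verify directness. The observation that only $\eta(\xi)=1$ is used (not closedness of $\eta$ or $\omega$) is accurate, though the paper does not remark on it.
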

\begin{proof}
Since the tangent space to $\cF_\xi$ at $p\in M$ is spanned by $\xi_p$, $\Omega^p_{\textrm{hor}}(M)=\{\alpha\in\Omega^p(M) \ | \
\imath_\xi\alpha=0\}$.
Given any $\alpha\in\Omega^p(M)$, we can write
\[
\a=(\a-\eta\wedge\imath_\xi\a)+\eta\wedge\imath_\xi\a=\colon\a_1+\eta\wedge\a_2.
\]
Since $\eta(\xi)=1$, we see that $\imath_\xi\a_1=0$, hence $\a_1\in\Omega^p_{\mathrm{hor}}(M)$. Furthermore $\a_2\in\Omega^{p-1}_{\mathrm{hor}}(M)$, hence
\[
\Omega^p(M)=\Omega^p_{\mathrm{hor}}(M)+\eta\wedge\Omega^{p-1}_{\mathrm{hor}}(M)
\]
Suppose $\beta\in\Omega^p_{\mathrm{hor}}(M)\cap\eta\wedge\Omega^{p-1}_{\mathrm{hor}}(M)$. Hence $\eta\wedge\beta=0$; by contracting the latter with $\xi$, we get
$0=\beta-\eta\wedge\imath_\xi\beta=\beta$,
which gives $\beta=0$. 
\end{proof}

If $(M,\eta,\omega)$ is a cosymplectic manifold, the map $\fX(M)\to \Omega^1(M)$ def{}ined by
\begin{equation}\label{isomorphism:1}
 X\mapsto \imath_X\omega+\eta(X)\eta 
\end{equation}
is an isomorphism (see \cite[Proposition 1]{Albert}). By Lemma \ref{splitting},
\[
\Omega^1(M)=\Omega^1_{\mathrm{hor}}(M)\oplus\langle\eta\rangle,
\]
where $\langle\eta\rangle$ denotes the $C^\infty(M)$-module generated by $\eta$. We have $\imath_\xi(\imath_X\omega)=-\imath_X(\imath_\xi\omega)=0$, hence
$\imath_X\omega\in\Omega^1_{\mathrm{hor}}(M)$.
We can rephrase \eqref{isomorphism:1} in the following way:

\begin{proposition}\label{isomorphism}
Let $(M,\eta,\omega)$ be a cosymplectic manifold. Then the map 
\[
\begin{array}{rccc}
\Psi\colon & \fX(M) & \to & \Omega^1_{\mathrm{hor}}(M)\oplus C^\infty(M)\\
& X &\mapsto &(\imath_X\omega,\eta(X))
\end{array}
\]
is an isomorphism.
\end{proposition}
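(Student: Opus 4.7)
The strategy is simply to combine the isomorphism \eqref{isomorphism:1} with the splitting of Lemma \ref{splitting} in degree $p=1$, so essentially all the work has already been done; what remains is to identify the pieces correctly.

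First I would observe that by Lemma \ref{splitting} and the fact that $\Omega^0_{\mathrm{hor}}(M) = C^\infty(M)$, we have
\[
\Omega^1(M) = \Omega^1_{\mathrm{hor}}(M) \oplus \eta \wedge C^\infty(M),
\]
and the second summand is canonically isomorphic to $C^\infty(M)$ via the map $f\eta \leftrightarrow f$. This gives a canonical isomorphism
\[
\Phi\colon \Omega^1(M) \;\xrightarrow{\cong}\; \Omega^1_{\mathrm{hor}}(M) \oplus C^\infty(M), \qquad \alpha \longmapsto \bigl(\alpha - \eta\wedge\imath_\xi\alpha,\; \imath_\xi\alpha\bigr).
\]

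Next I would recall from \eqref{isomorphism:1} (cited from \cite[Proposition 1]{Albert}) that the map
\[
\Theta\colon \mathfrak{X}(M)\to \Omega^1(M),\qquad X\longmapsto \imath_X\omega + \eta(X)\eta,
\]
is an isomorphism of $C^\infty(M)$-modules. I would then verify that $\Psi = \Phi \circ \Theta$: applied to $X\in\mathfrak{X}(M)$, $\Theta(X) = \imath_X\omega + \eta(X)\eta$, and contracting with $\xi$ gives $\imath_\xi(\imath_X\omega) + \eta(X) = -\imath_X(\imath_\xi\omega) + \eta(X) = \eta(X)$ (using $\imath_\xi\omega=0$). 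Hence the horizontal part of $\Theta(X)$ is precisely $\imath_X\omega \in \Omega^1_{\mathrm{hor}}(M)$, and the scalar component is $\eta(X)$, giving $\Phi(\Theta(X)) = (\imath_X\omega, \eta(X)) = \Psi(X)$.

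Since both $\Theta$ and $\Phi$ are $C^\infty(M)$-linear isomorphisms, so is their composition $\Psi$. There is no genuine obstacle here: the statement is essentially a repackaging of \eqref{isomorphism:1} via Lemma \ref{splitting}. If one preferred a self-contained argument not invoking \eqref{isomorphism:1}, the only slightly non-trivial step would be showing that $X\mapsto \imath_X\omega$ restricts to an isomorphism from the horizontal vector fields (those annihilated by $\eta$) onto $\Omega^1_{\mathrm{hor}}(M)$, which follows from the fact that $\omega$ is fibrewise non-degenerate on $\ker\eta$ since $\eta\wedge\omega^n$ is a volume form and $\imath_\xi\omega=0$; injectivity of $\Psi$ then reduces to $\ker(\imath_\cdot\omega)=\langle\xi\rangle$ combined with $\eta(\xi)=1$, and surjectivity is handled by splitting an arbitrary $(\alpha,f)$ as $X_1 + f\xi$ with $X_1$ the horizontal lift of $\alpha$.
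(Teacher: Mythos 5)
Your proof is correct and follows essentially the same route as the paper: the paper likewise combines the isomorphism \eqref{isomorphism:1} from \cite{Albert} with the degree-one case of Lemma \ref{splitting} (writing $\Omega^1(M)=\Omega^1_{\mathrm{hor}}(M)\oplus\langle\eta\rangle$), observes that $\imath_\xi(\imath_X\omega)=-\imath_X(\imath_\xi\omega)=0$ so that $\imath_X\omega$ is the horizontal component, and presents the proposition as a rephrasing. Your explicit factorization $\Psi=\Phi\circ\Theta$ and the optional self-contained nondegeneracy argument are fine additions but not a different approach.
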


\subsection{Basic cohomology of K-cosymplectic structures}\label{cohomology K-cosymplectic}
We prove a splitting result for the de Rham cohomology of a compact K-cosymplectic manifold, generalizing the corresponding result for compact coK\"ahler manifolds (see \cite{BLO}).

\begin{theorem}\label{1-forms}
Let $(M,\eta,\omega)$ be a compact K-cosymplectic manifold. Then the cohomology $H^*(M;\bR)$ splits as $H^*(M;\cF_\xi)\otimes \Lambda\langle[\eta]\rangle$. In particular, for each $0\leq p\leq 2n+1$,
\[
H^p(M;\bR)=H^p(M;\cF_\xi)\oplus [\eta]\wedge H^{p-1}(M;\cF_\xi).
\]
\end{theorem}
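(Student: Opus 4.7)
The plan is to compute $H^*(M;\bR)$ using $T$-invariant forms, where $T$ is the torus produced by Proposition \ref{Yamazaki}, then restrict the algebraic splitting of Lemma \ref{splitting} to this subcomplex and identify its summands with basic forms.

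First I would invoke Proposition \ref{Yamazaki}: since $M$ is compact K-cosymplectic, there is an effective torus action on $M$ by diffeomorphisms with the property that the Reeb flow $\psi_t$ is a one-parameter subgroup with dense image in $T$. Standard averaging over the connected compact group $T$ (using the Cartan homotopy for the $T$-action) shows that the inclusion $(\Omega^*(M)^T,d)\hookrightarrow (\Omega^*(M),d)$ is a quasi-isomorphism, so $H^*(M;\bR)$ is computed by $T$-invariant forms.

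Next I would observe that on the $T$-invariant subcomplex, horizontal and basic coincide. For a $T$-invariant horizontal form $\alpha$ one has $L_\xi\alpha=0$ (since $\xi$ generates a dense subgroup of $T$ and the action is continuous on smooth forms), hence $\imath_\xi d\alpha = L_\xi\alpha-d\imath_\xi\alpha = 0$, which is precisely basicness; the converse is clear. Furthermore $\eta$ itself is $T$-invariant, because
$$L_\xi\eta = d(\imath_\xi\eta) + \imath_\xi d\eta = d(1)+0 = 0,$$
and density then upgrades $\xi$-invariance to $T$-invariance. Applying Lemma \ref{splitting} restricted to $T$-invariant forms therefore gives
$$\Omega^p(M)^T = \Omega^p(M;\cF_\xi)\oplus \eta\wedge \Omega^{p-1}(M;\cF_\xi).$$
Because $d\eta=0$, the differential respects the decomposition: for basic $\alpha_1,\alpha_2$ one has $d(\alpha_1+\eta\wedge\alpha_2)=d\alpha_1-\eta\wedge d\alpha_2$, and $d\alpha_i$ is again basic.

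Finally, this identifies the $T$-invariant complex with the tensor product of complexes
$$(\Omega^*(M)^T,d) \;\cong\; (\Omega^*(M;\cF_\xi),d) \otimes \bigl(\Lambda\langle\eta\rangle,0\bigr),$$
where $\Lambda\langle\eta\rangle$ is the exterior algebra on a single closed generator of degree one. Passing to cohomology (the Künneth formula is trivial here because the right-hand factor has zero differential) yields $H^*(M;\bR)\cong H^*(M;\cF_\xi)\otimes\Lambda\langle[\eta]\rangle$, and reading off degree $p$ produces the claimed direct sum $H^p(M;\bR)=H^p(M;\cF_\xi)\oplus [\eta]\wedge H^{p-1}(M;\cF_\xi)$. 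The step most likely to require care is the averaging argument—namely, that the projection $P\colon \Omega^*(M)\to\Omega^*(M)^T$ obtained by integrating pullbacks over $T$ is chain-homotopic to the identity; this is standard for any compact connected Lie group action but deserves either a citation or a brief justification using the Cartan formula.
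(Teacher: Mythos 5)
Your proposal is correct and follows essentially the same route as the paper: the paper works with the complex $\Omega^*_\xi(M)$ of $\xi$-invariant forms (which by density of the Reeb flow in $T$ is exactly your $\Omega^*(M)^T$), explicitly offers the torus-averaging argument via the closure of the Reeb flow as one of its two justifications for the quasi-isomorphism with $\Omega^*(M)$, and then splits the invariant complex as $\Omega^*(M;\cF_\xi)\oplus\eta\wedge\Omega^*(M;\cF_\xi)$ using Lemma \ref{splitting}, just as you do. The only cosmetic difference is that the paper also records an alternative justification of the quasi-isomorphism via the parallelism of $\eta$ (citing the coK\"ahler case), whereas you rely solely on averaging over $T$.
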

\begin{proof}
 Let $(M,\eta,\omega)$ be a compact K-cosymplectic manifold. Set
\[
\Omega^p_\xi(M)=\{\alpha\in \Omega^p(M) \ | \ L_\xi\alpha=0\}.
\]
Since the Lie derivative commutes with the exterior derivative, $(\Omega^*_\xi(M),d)$ is a dif{}ferential subalgebra of $(\Omega^*(M),d)$.
In \cite[Corollary 4.3]{BLO} it is proven that if $M$ is coK\"ahler, the inclusion $\iota\colon(\Omega^*_\xi(M),d)\hookrightarrow (\Omega^*(M),d)$ is a quasi-isomorphism, i.e. it induces an
isomorphism in 
cohomology. The proof relies on the fact that $\eta$ is a parallel form in the coK\"ahler setting, and this remains true when the structure is K-cosymplectic (see Remark \ref{eta_parallel}). 
Alternatively, one can argue as follows. Since $M$ is compact, the closure of the Reeb flow generates a torus action on $M$ (see the discussion before Proposition \ref{Yamazaki}); the fact that
$\iota$ is a quasi-isomorphism is then a special case of a general result on the cohomology of invariant forms, see for instance 
\cite[\S 9, Theorem 1]{On}. Because $\Omega^*_\xi(M)\cap \Omega^*_{\mathrm{hor}}(M)=\Omega^*(M;\cF_\xi)$, Lemma \ref{splitting} implies that
\[
\Omega^p_\xi(M)=\Omega^p(M;\cF_\xi)\oplus\eta\wedge\Omega^{p-1}(M;\cF_\xi)
\]
for all $p$. Each summand of the right hand side is a dif{}ferential subalgebra; taking cohomology, the right-hand side gives $H^p(M;\cF_\xi)\oplus[\eta]\wedge H^{p-1}(M;\cF_\xi)$. By the above
discussion, 
the cohomology of the left-hand side is isomorphic to $H^p(M;\bR)$.
\end{proof}

As a consequence, we deduce some properties of the basic cohomology of the characteristic foliation on a K-cosymplectic manifold.
\begin{proposition}\label{basic_cohomology}
Let $(M,\eta,\omega)$ be a compact, connected K-cosymplectic manifold of dimension $2n+1$, let $\cF_\xi$ denote the characteristic foliation and let $H^*(M;\cF_\xi)$ be the basic cohomology. Then
\begin{enumerate}
\item the groups $H^p(M;\cF_\xi)$ are f{}inite dimensional;
\item $H^{2n}(M;\cF_\xi)\cong\bR$, $H^0(M;\cF_\xi)\cong\bR$ and $H^p(M;\cF_\xi)=0$ for $p>2n$;
\item the class $[\omega]^p\in H^{2p}(M;\cF_\xi)$ is non-trivial for $1\leq p \leq n$;
\item $H^1(M;\bR)\cong H^1(M;\cF_\xi)\oplus \bR[\eta]$;
\item there is a non-degenerate pairing
\[
\Psi\colon H^p(M;\cF_\xi)\otimes H^{2n-p}(M;\cF_\xi)\longrightarrow\bR.
\]
\end{enumerate}
\end{proposition}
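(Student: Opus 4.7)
The plan is to derive items (1)--(4) as essentially direct corollaries of the splitting Theorem \ref{1-forms} together with elementary dimension counts, with item (5) being the only part that requires a real argument.

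For item (1), Theorem \ref{1-forms} exhibits $H^p(M;\cF_\xi)$ as a direct summand of the finite-dimensional space $H^p(M;\bR)$. For the first half of item (2), $H^0(M;\cF_\xi)$ consists of the $d$-closed basic functions, which on a connected manifold are the constants. For the vanishing half, I observe that any $(2n+1)$-form $\alpha$ with $\imath_\xi \alpha = 0$ must vanish identically, because at each point $\xi$ together with a basis of $\ker\eta$ spans the tangent space; hence $\Omega^{2n+1}(M;\cF_\xi) = 0$ and $H^p(M;\cF_\xi) = 0$ for $p > 2n$. Feeding $H^{2n+1}(M;\cF_\xi) = 0$ back into the splitting at $p = 2n+1$ gives an isomorphism $[\eta] \wedge H^{2n}(M;\cF_\xi) \cong H^{2n+1}(M;\bR) \cong \bR$ (the generator being the class of $\eta \wedge \omega^n$), so $H^{2n}(M;\cF_\xi) \cong \bR$. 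Item (4) is literally the splitting theorem with $p=1$ together with $H^0(M;\cF_\xi) = \bR$. For item (3), if $\omega^p = d\alpha$ with $\alpha$ basic and $1 \le p \le n$, then using $d\omega = 0$ and $d\eta = 0$ one gets $\eta \wedge \omega^n = \eta \wedge d(\alpha \wedge \omega^{n-p}) = -d(\eta \wedge \alpha \wedge \omega^{n-p})$, contradicting the fact that the volume form $\eta \wedge \omega^n$ cannot be exact on a compact manifold.

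The substantive step is item (5), which I expect to be the main obstacle. The candidate pairing is $([\alpha],[\beta]) \mapsto \int_M \eta \wedge \alpha \wedge \beta$; well-definedness on cohomology follows from Stokes and $d\eta = 0$. For non-degeneracy, suppose $[\alpha] \neq 0$ in $H^p(M;\cF_\xi)$. By Theorem \ref{1-forms}, wedging with $[\eta]$ is injective, so $[\eta \wedge \alpha] \neq 0$ in $H^{p+1}(M;\bR)$, and ordinary Poincar\'e duality supplies a closed $(2n-p)$-form $\gamma$ on $M$ with $\int_M \eta \wedge \alpha \wedge \gamma \neq 0$. The difficulty is that $\gamma$ need not be basic. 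To fix this, I average $\gamma$ over the torus $T$ obtained as the closure of the Reeb flow (whose existence is guaranteed by Proposition \ref{Yamazaki}); the average $\tilde\gamma$ is closed, cohomologous to $\gamma$, and satisfies $L_\xi \tilde\gamma = 0$. Using the decomposition $\Omega_\xi^*(M) = \Omega^*(M;\cF_\xi) \oplus \eta \wedge \Omega^{*-1}(M;\cF_\xi)$ from the proof of Theorem \ref{1-forms}, I write $\tilde\gamma = \tilde\gamma_1 + \eta \wedge \tilde\gamma_2$ with both pieces basic. Since $d\eta = 0$ and the decomposition respects the differential, $d\tilde\gamma = 0$ forces $d\tilde\gamma_1 = 0$, so $\tilde\gamma_1$ is a closed basic $(2n-p)$-form. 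Because $\eta \wedge \eta = 0$, the $\eta \wedge \tilde\gamma_2$ term drops out of the pairing, and $T$-invariance of the integrand gives
\[
\int_M \eta \wedge \alpha \wedge \tilde\gamma_1 \;=\; \int_M \eta \wedge \alpha \wedge \tilde\gamma \;=\; \int_M \eta \wedge \alpha \wedge \gamma \;\neq\; 0,
\]
so $\tilde\gamma_1$ is the desired basic dual. This averaging/decomposition step is the only place where the K-cosymplectic hypothesis enters in an essential way (through the torus action), and it is the part I expect to require the most care to state precisely.
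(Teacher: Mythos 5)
Your proposal is correct and follows essentially the same route as the paper: items (1)--(4) are read off from the splitting of Theorem \ref{1-forms} together with the vanishing of top-degree horizontal forms and the non-exactness of the volume form $\eta\wedge\omega^n$, and item (5) is reduced to ordinary Poincar\'e duality on $M$ via that splitting. The only cosmetic difference is in (5): the paper takes the Poincar\'e dual of $[\alpha]$ in degree $2n+1-p$ and decomposes it at the cohomology level, discarding the purely basic summand because $H^{2n+1}(M;\cF_\xi)=0$, whereas you take the dual of $[\eta\wedge\alpha]$ in degree $2n-p$, average over the torus, and discard the $\eta\wedge(\cdot)$ summand because $\eta\wedge\eta=0$ --- two mirror-image implementations of the same idea.
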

\begin{proof}
A theorem of El Kacimi-Alaoui, Sergiescu and Hector \cite{EKASH} states that the basic cohomology of a Riemannian foliation on a compact manifold is always f{}inite-dimensional, which directly implies
1. However, in our simple situation, 
one does not need to invoke this result: since $M$ is a compact manifold, $b_p(M)=\dim H^p(M;\bR)<\infty$ for every $0\leq p\leq 2n+1$, and by Theorem \ref{1-forms}, $b_p(M)=\dim H^p(M;\cF_\xi)+\dim
H^{p-1}(M;\cF_\xi)$ which gives 
f{}initeness recursively. 

As $\cF_\xi$ is a foliation of codimension $2n$, the basic cohomology $H^*(M;\cF_\xi)$ vanishes in degrees larger than $2n$. Also, $b_0(M)=1$ implies that $H^0(M;\cF_\xi)\cong \bR$. The K\"ahler form $\omega$ is closed and non-degenerate, 
meaning that $\omega^p\neq 0$ for $1\leq p\leq n$. It is also basic, and cannot be exact by compactness of $M$, according to Stokes' theorem. 
Therefore $[\omega]^p$ is non-trivial in $H^{2p}(M;\cF_\xi)$ for $1\leq p \leq n$. Since $M$ is compact and $\eta\wedge \omega^n$ is a volume form, we have $b_{2n+1}(M)=1$, which implies that $H^{2n}(M;\cF_\xi)$ is generated by 
$[\omega]^n$. This completes the proof of 2.~and 3.

Number 4.~follows immediately from Theorem \ref{1-forms} and number 2. To construct the pairing $\Psi$ we proceed as follows. Given a cohomology class $[\alpha]\in H^p(M;\cF_\xi)$, by Poincaré duality on $M$ there exists $[\beta]\in 
H^{2n+1-p}(M;\bR)$ such that $([\alpha],[\beta])\neq 0$, where $(\cdot,\cdot)$ is the usual pairing on $M$. Now write $[\beta]=[\sigma]+[\eta\wedge\tau]$ according to the splitting of Theorem \ref{1-forms}, with $[\sigma]\in 
H^{2n+1-p}(M;\cF_\xi)$ and $[\tau]\in H^{2n-p}(M;\cF_\xi)$. Then $[\alpha\wedge\sigma]\in H^{2n+1}(M;\cF_\xi)$, which vanishes by number 2. Therefore we see that the Poincar\'e pairing on $M$ is given by
\[
\int_M\alpha\wedge\eta\wedge\tau.
\]
For such $\alpha$, therefore, the pairing $\Psi\colon H^p(M;\cF_\xi)\otimes H^{2n-p}(M;\cF_\xi)\longrightarrow\bR$
\[
\Psi([\alpha],[\tau])=\int_M\alpha\wedge\eta\wedge\tau
\]
is non-degenerate, and we have number 5.
\end{proof}
\begin{definition}
Given a compact K-cosymplectic manifold $(M,\eta,\omega)$ of dimension $2n+1$, we def{}ine the \textbf{basic Betti numbers} as
\[
b_p(M;\cF_\xi)=\dim H^p(M;\cF_\xi), \quad 0\leq p\leq 2n+1.
\]
\end{definition}
\begin{corollary}
Let $(M,\eta,\omega)$ be a compact K-cosymplectic manifold of dimension $2n+1$. Then
\begin{itemize}
\item $b_{2n-p}(M;\cF_\xi)=b_p(M;\cF_\xi)$;
\item For any $1\leq p\leq 2n$, the basic Betti numbers $b_p(M;\cF_\xi)$ are determined by
\[
b_p(M;\cF_\xi) = \sum_{i=0}^p (-1)^i b_{p-i}(M).
\]
In particular, they are topological invariants of $M$.
\end{itemize}
\end{corollary}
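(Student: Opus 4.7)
The proof has two parts, and both should follow cleanly from results already established in the paper.

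For the first equality $b_{2n-p}(M;\cF_\xi)=b_p(M;\cF_\xi)$, the plan is simply to invoke Proposition \ref{basic_cohomology}(5). The non-degenerate pairing $\Psi\colon H^p(M;\cF_\xi)\otimes H^{2n-p}(M;\cF_\xi)\to\bR$ there immediately yields an isomorphism between $H^p(M;\cF_\xi)$ and the dual of $H^{2n-p}(M;\cF_\xi)$; since both spaces are finite-dimensional by Proposition \ref{basic_cohomology}(1), taking dimensions gives the asserted equality.

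For the second equality, I would start from the splitting given in Theorem \ref{1-forms}, namely $H^p(M;\bR)\cong H^p(M;\cF_\xi)\oplus [\eta]\wedge H^{p-1}(M;\cF_\xi)$. Since $[\eta]\neq 0$ in $H^1(M;\bR)$ (Remark \ref{first_Betti_number}) and wedging with a closed form is an injection on the basic cohomology summand appearing as $[\eta]\wedge H^{p-1}(M;\cF_\xi)$ (by the direct sum nature of the splitting), taking dimensions yields the key recursion
\[
b_p(M)=b_p(M;\cF_\xi)+b_{p-1}(M;\cF_\xi),\qquad 0\leq p\leq 2n+1,
\]
with the convention $b_{-1}(M;\cF_\xi)=0$. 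This can be rewritten as $b_p(M;\cF_\xi)=b_p(M)-b_{p-1}(M;\cF_\xi)$, and unrolling the recursion starting from $b_0(M;\cF_\xi)=1=b_0(M)$ (Proposition \ref{basic_cohomology}(2)) gives
\[
b_p(M;\cF_\xi)=b_p(M)-b_{p-1}(M)+b_{p-2}(M)-\cdots+(-1)^pb_0(M)=\sum_{i=0}^p(-1)^ib_{p-i}(M).
\]
A short induction on $p$ formalizes this.

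Since the right-hand side involves only the ordinary Betti numbers $b_j(M)$, which are topological invariants of $M$, the basic Betti numbers $b_p(M;\cF_\xi)$ are expressed entirely in terms of topological data of $M$ and hence are themselves topological invariants. I do not foresee any genuine obstacle here: the first part is an immediate consequence of the Poincaré-type duality already proved, and the second part is a routine linear algebra recursion from the splitting theorem. The only point requiring a brief remark is the base case and the range $1\le p\le 2n$ (the statement excludes $p=0$ and $p=2n+1$, for which the right-hand side formula would need to be checked separately, but these ranges can be read off directly from Proposition \ref{basic_cohomology}(2)).
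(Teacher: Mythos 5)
Your proposal is correct and follows the same route the paper intends: the first bullet is the immediate dimension count from the non-degenerate pairing in Proposition \ref{basic_cohomology}(5), and the second bullet unrolls the recursion $b_p(M)=b_p(M;\cF_\xi)+b_{p-1}(M;\cF_\xi)$ coming from Theorem \ref{1-forms}, which is exactly the relation the paper itself records in the proof of Proposition \ref{basic_cohomology}(1).
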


\subsection{The Lefschetz map on K-cosymplectic manifolds}\label{Lefschetz K-cosymplectic}

Let $(\eta,\xi,\phi,g)$ be a coK\"ahler structure on a manifold $M$ of dimension $2n+1$ and let $\omega$ be the K\"ahler form. Let $\cL\colon\Omega^p(M)\to\Omega^{2n+1-p}(M)$ be the map
\begin{equation}\label{Lefschetz}
\cL(\a)=\omega^{n-p}\wedge(\omega\wedge\imath_\xi\a+\eta\wedge\alpha). 
\end{equation}
$\cL$ is called the \textbf{Lefschetz map}. Unlike the symplectic Lefschetz map, $\cL$ does not commute with the exterior dif{}ferential, hence it does not descend to cohomology.
It is proven in \cite[Theorem 12]{CdLM} that, when $M$ is compact, $\cL\colon \cH^p(M)\to\cH^{2n+1-p}(M)$ is an isomorphism, where $\cH^*(M)$ denotes harmonic forms. If the structure $(\eta,\xi,\phi,g)$ is 
not coK\"ahler, then $\cL$ does not necessarily send harmonic to harmonic forms. Thus the Lefschetz map is not well def{}ined for arbitrary cosymplectic structures. It was observed in \cite{BLO} that
the restriction of the 
Lefschetz map to the dif{}ferential subalgebra $(\Omega^*_\xi(M),d)$ supercommutes with the exterior dif{}ferential. Furthermore, since the inclusion $\iota\colon(\Omega^*_\xi(M),d)\to(\Omega^*(M),d)$
is a quasi-isomorphism when the structure
is coK\"ahler and $M$ is compact (this is again \cite[Corollary 4.3]{BLO}), $\cL$ descends to the cohomology $H^*(M;\bR)$, and is shown to be an isomorphism, recovering the result of \cite{CdLM}. 

We observed before that $\iota\colon(\Omega^*_\xi(M),d)\to(\Omega^*(M),d)$ is a quasi-isomorphism also in the compact K-cosymplectic case. Hence we get the following

\begin{lemma}
 Let $(\eta,\xi,\phi,g)$ be a K-cosymplectic structure on a compact manifold $M$ of dimension $2n+1$. Then the Lefschetz map $\cL\colon\Omega^p_\xi(M)\to\Omega^{2n+1-p}_\xi(M)$ 
 is well def{}ined and descends to cohomology.
\end{lemma}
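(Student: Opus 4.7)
The plan is to split the assertion into its two parts: first that $\cL$ preserves $\xi$-invariant forms, and second that on $\Omega^*_\xi(M)$ it is compatible with $d$ in the sense needed to induce a well-defined map on cohomology. The key point is that the coK\"ahler computation of \cite{BLO} only uses identities that are still available in the K-cosymplectic case, together with the quasi-isomorphism $\iota\colon(\Omega^*_\xi(M),d)\hookrightarrow(\Omega^*(M),d)$ noted in the proof of Theorem \ref{1-forms}.

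For well-definedness I would verify that $L_\xi\cL(\alpha)=0$ whenever $L_\xi\alpha=0$. Cartan's magic formula together with the cosymplectic data gives $L_\xi\omega=d\imath_\xi\omega+\imath_\xi d\omega=0$ (since $\imath_\xi\omega=0$ and $d\omega=0$) and $L_\xi\eta=d(\eta(\xi))+\imath_\xi d\eta=0$. Moreover $L_\xi$ commutes with $\imath_\xi$, so $L_\xi\imath_\xi\alpha=\imath_\xi L_\xi\alpha=0$. Since $L_\xi$ is a derivation, each factor in
\[
\cL(\alpha)=\omega^{n-p}\wedge(\omega\wedge\imath_\xi\alpha+\eta\wedge\alpha)
\]
is $\xi$-invariant, and hence so is $\cL(\alpha)\in\Omega^{2n+1-p}_\xi(M)$.

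For the passage to cohomology I would reproduce the argument from \cite[\S 4]{BLO}: a direct Leibniz expansion of $d\cL(\alpha)$ on $\xi$-invariant forms, using $d\omega=d\eta=0$ together with the identity $d\imath_\xi\alpha=-\imath_\xi d\alpha$ (which follows from $L_\xi\alpha=0$ via Cartan's formula), shows that $d\circ\cL\pm\cL\circ d=0$ on $\Omega^*_\xi(M)$. None of these inputs depend on the normality of the almost contact metric structure, so the verification carries over verbatim from the coK\"ahler to the K-cosymplectic setting. Consequently $\cL$ sends closed forms to closed forms and exact forms to exact forms within $\Omega^*_\xi(M)$, and thus descends to a linear map on $H^*(\Omega^*_\xi(M),d)$. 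Precomposing with the quasi-isomorphism $\iota$ recovered in the proof of Theorem \ref{1-forms} then identifies this with a map $H^p(M;\bR)\to H^{2n+1-p}(M;\bR)$.

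The only point requiring care is the supercommutation identity, but this is a direct bookkeeping exercise using $d\omega=d\eta=0$ and the Cartan identity applied to $\xi$-invariant forms; I therefore expect no genuine obstacle beyond the need to be careful with signs coming from the factors $\omega^{n-p}$, $\eta$, and $\imath_\xi\alpha$.
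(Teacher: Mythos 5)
Your proposal follows the paper's route: the paper's proof is a one-line appeal to the observation from \cite{BLO} that $\cL$ restricted to $(\Omega^*_\xi(M),d)$ is compatible with the exterior differential, combined with the fact, recorded in the proof of Theorem \ref{1-forms}, that the inclusion $\iota\colon(\Omega^*_\xi(M),d)\hookrightarrow(\Omega^*(M),d)$ remains a quasi-isomorphism in the compact K-cosymplectic case; your write-up simply makes the \cite{BLO} computation explicit, and your verification that $\cL$ preserves $\Omega^*_\xi(M)$ is correct. One caveat on the key identity: $d\circ\cL\pm\cL\circ d=0$ cannot hold as literally stated, since $d\circ\cL$ sends $\Omega^p$ to $\Omega^{2n+2-p}$ while $\cL\circ d$ lands in $\Omega^{2n-p}$ (the paper's phrase ``supercommutes'' is equally loose here). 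The Leibniz expansion you describe actually yields, for $\alpha\in\Omega^p_\xi(M)$,
\[
d\cL(\alpha)=-\,\omega\wedge\cL(d\alpha),
\]
which gives closed $\mapsto$ closed, and for $\beta\in\Omega^{p-1}_\xi(M)$ one computes directly
\[
\cL(d\beta)=-\,d\left(\omega^{n-p+1}\wedge\imath_\xi\beta+\omega^{n-p}\wedge\eta\wedge\beta\right),
\]
which gives exact $\mapsto$ exact; both use only $d\eta=d\omega=0$ and the Cartan identity $d\imath_\xi=-\imath_\xi d$ on $\xi$-invariant forms, exactly the inputs you list. With this correction the descent to $H^*(\Omega^*_\xi(M),d)\cong H^*(M;\bR)$ goes through as you intend, and no normality of the structure is used.
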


We are not claming here that $\cL$ is an isomorphism in the K-cosymplectic case. This is in fact false; to see this, notice that there exist symplectic manifolds which do not satisfy the usual
Lefschetz property (any symplectic non-toral nilmanifold does the job, see \cite{Benson_Gordon}). Choose one such $(K,\tau)$ and f{}ix an adapted metric $h$, so that $(K,\tau,h)$ is almost K\"ahler.
Then $M=K\times S^1$ admits a K-cosymplectic structure by Corollary \ref{product}. One can see that the Lefschetz map \eqref{Lefschetz} on $M$ is not an isomorphism.


\section{Cosymplectic and Hamiltonian vector f{}ields}\label{section:HamiltonianVF}

In this section we recall the notions of cosymplectic and Hamiltonian vector fields, which were introduced by Albert in \cite{Albert}; however, he used a slightly different terminology.
Furthermore, in his paper Albert deals with the cosymplectic and the contact case simultaneously. So far, only the contact set-up has caught attention. In order to make our
exposition self-contained, we allow ourselves to provide a proof of some of the results we quote.

\subsection{Cosymplectic vector f{}ields}
\begin{definition}
Let $(M,\eta,\omega)$ be a cosymplectic manifold. Let $\psi\colon M\to M$ be a dif{}feomorphism. $\psi$ is a \textbf{weak cosymplectomorphism} if $\psi^*\eta=\eta$ and there
exists a function $h_\psi\in C^\infty(M)$ such that 
$\psi^*\omega=\omega-dh_\psi\wedge\eta$. Such $\psi$ is a \textbf{cosymplectomorphism} if one can choose $h_\psi$ to vanish.
\end{definition}

\begin{remark}\label{Reeb_preserved}
A cosymplectomorphism $\psi$ satisf{}ies $\psi^*\eta=\eta$ and $\psi^*\omega=\omega$. Hence it respects the Reeb f{}ield and the characteristic foliation.
\end{remark}

\begin{definition}
Let $(M,\eta,\omega)$ be a cosymplectic manifold and let $X\in\fX(M)$ be a vector f{}ield. $X$ is \textbf{weakly cosymplectic} if $L_X\eta=0$ and there exists a function $h_X\in C^\infty(M)$ such that
$L_X\omega=-dh_X\wedge\eta$. 
$X$ is \textbf{cosymplectic} if one can choose $h_X$ to vanish.
\end{definition}
\begin{lemma}\label{Lemma:cosymplectic}
Let $X$ and $Y$ be two weakly cosymplectic  vector f{}ields such that $L_X\omega=-dh_X\wedge\eta$ and $L_Y\omega=-dh_Y\wedge\eta$ for $h_X,h_Y\in C^\infty(M)$.
Then $[X,Y]$ belongs to $\ker\eta$ and is weakly cosymplectic, with $h_{[X,Y]}=X(h_Y)-Y(h_X)$. In particular, if both $X$ and $Y$ are cosymplectic, then so is $[X,Y]$.
\end{lemma}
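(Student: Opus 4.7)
The plan is to verify the three assertions in order: first $\eta([X,Y])=0$, then the weak cosymplectic identity $L_{[X,Y]}\eta=0$, and finally the computation of $L_{[X,Y]}\omega$. The key technical ingredients are the naturality identity $L_{[X,Y]} = [L_X, L_Y]$ acting on differential forms, Cartan's magic formula $L_Z = d\imath_Z + \imath_Z d$, and the fact that $d\eta = 0$.

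First I would exploit $L_X\eta = 0$: by Cartan's formula and $d\eta = 0$ this gives $d(\eta(X)) = 0$, so $\eta(X)$ is locally constant on $M$; symmetrically for $\eta(Y)$. Then
\[
\eta([X,Y]) = \eta(L_X Y) = L_X(\eta(Y)) - (L_X\eta)(Y) = X(\eta(Y)) = 0,
\]
which proves $[X,Y] \in \ker\eta$. The identity $L_{[X,Y]}\eta = L_X L_Y \eta - L_Y L_X \eta = 0$ is then immediate from the hypotheses.

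For the main computation, I would again use $L_{[X,Y]} = [L_X,L_Y]$ to write
\[
L_{[X,Y]}\omega = L_X(L_Y\omega) - L_Y(L_X\omega) = -L_X(dh_Y\wedge\eta) + L_Y(dh_X\wedge\eta).
\]
Expanding each Lie derivative by the Leibniz rule, using $L_X\eta = L_Y\eta = 0$ together with the commutation $L_X d = d L_X$ (so that $L_X(dh_Y) = d(X(h_Y))$), everything collapses to
\[
L_{[X,Y]}\omega = -d\bigl(X(h_Y) - Y(h_X)\bigr)\wedge \eta,
\]
which is the desired formula with $h_{[X,Y]} = X(h_Y) - Y(h_X)$. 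The ``in particular'' clause, that $[X,Y]$ is cosymplectic when $X,Y$ are, then follows since then the $h$'s can be chosen to be constant, making $dh_{[X,Y]} = 0$.

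No single step looks like a real obstacle here; the only small point requiring care is the first claim $\eta([X,Y])=0$, where one must recognize that $L_X\eta = 0$ plus $d\eta = 0$ forces $\eta(X)$ (and $\eta(Y)$) to be locally constant, which is exactly what makes $X(\eta(Y))$ vanish rather than merely being killed modulo something.
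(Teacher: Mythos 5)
Your proof is correct. The first part (that $\eta(X),\eta(Y)$ are constant because $L_X\eta=d\imath_X\eta=0$, whence $\eta([X,Y])=X\eta(Y)-Y\eta(X)=0$) is exactly the paper's argument. For the main identity the paper takes a slightly different computational route: it uses the contraction commutator $\imath_{[X,Y]}=L_X\imath_Y-\imath_Y\imath\mspace{-14mu}\phantom{.}L_X$ written as $\imath_{[X,Y]}=L_X\imath_Y-\imath_YL_X$, and then tracks $d(\imath_{[X,Y]}\omega)$ step by step with Cartan's formula, substituting $L_X\omega=-dh_X\wedge\eta$ and $L_Y\omega=-dh_Y\wedge\eta$ along the way. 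You instead apply the Lie-derivative commutator $L_{[X,Y]}=L_XL_Y-L_YL_X$ directly to $\omega$ and finish with the Leibniz rule plus $L_X\eta=L_Y\eta=0$ and $L_Xd=dL_X$. Both arguments rest on the same Cartan-calculus identities; yours is arguably the cleaner of the two, since it avoids the intermediate juggling of $d\imath_X d\imath_Y\omega$ and lands on $-d\bigl(X(h_Y)-Y(h_X)\bigr)\wedge\eta$ in one Leibniz expansion. The only cosmetic remark is that the final clause is even more immediate than you state: for cosymplectic fields one takes $h_X=h_Y=0$ outright, so $h_{[X,Y]}=0$ identically.
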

\begin{proof}
Suppose $X$ and $Y$ are weakly cosymplectic. Then $\eta(X)$ and $\eta(Y)$ are constant functions on $M$, and $d\eta=0$ implies $\imath_{[X,Y]}\eta=X\eta(Y)-Y\eta(X)=0$. We must prove that 
$L_{[X,Y]}\omega=d(\imath_{[X,Y]}\omega)=-d(X(h_X)-Y(h_X))\wedge\eta$:
\begin{align*}
d(\imath_{[X,Y]}\omega) &= dL_X\imath_Y\omega - d\imath_YL_X\omega= d\imath_X d\imath_Y\omega + d\imath_Y(dh_X\wedge \eta)= d\imath_X L_Y\omega + (d\imath_Ydh_X)\wedge \eta\\
&= -d\imath_X(dh_Y\wedge \eta) + (d(Y(h_X)))\wedge \eta= -d(X(h_Y)-Y(h_X))\wedge \eta.
\end{align*}
\end{proof}
\begin{corollary}
Let $(M,\eta,\omega)$ be a cosymplectic manifold. Then (weakly) cosymplectic vector f{}ields form a Lie subalgebra in $\fX(M)$.
\end{corollary}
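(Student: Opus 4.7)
The corollary asserts two things: that weakly cosymplectic vector fields form a Lie subalgebra, and that cosymplectic vector fields do as well. Since the Lie subalgebra structure requires closure under both $\bR$-linear combinations and the Lie bracket, my plan is to verify each of these in turn, with the bracket closure being the substantive part that is already handled by the preceding lemma.

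For $\bR$-linearity, I would observe that the Lie derivative $L_X$ is $\bR$-linear in $X$. Thus if $X,Y$ are weakly cosymplectic with $L_X\eta=L_Y\eta=0$ and $L_X\omega=-dh_X\wedge\eta$, $L_Y\omega=-dh_Y\wedge\eta$, then for any $a,b\in\bR$ one has $L_{aX+bY}\eta=0$ and
\[
L_{aX+bY}\omega = -d(ah_X+bh_Y)\wedge\eta,
\]
so $aX+bY$ is weakly cosymplectic with associated function $ah_X+bh_Y$. The same argument, setting $h_X=h_Y=0$, shows closure of cosymplectic vector fields under $\bR$-linear combinations.

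For the bracket, I would simply invoke Lemma \ref{Lemma:cosymplectic}: it says precisely that $[X,Y]$ is weakly cosymplectic (in fact lying in $\ker\eta$) with associated function $h_{[X,Y]}=X(h_Y)-Y(h_X)$, and moreover that if both $X$ and $Y$ are cosymplectic (so $h_X=h_Y=0$) then $h_{[X,Y]}=0$, i.e.\ $[X,Y]$ is itself cosymplectic. Combined with the linearity step, this establishes that both classes form Lie subalgebras of $\fX(M)$.

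There is no real obstacle here — the corollary is an immediate repackaging of Lemma \ref{Lemma:cosymplectic}, with only the trivial linearity observation added. The only point worth highlighting in the writeup is that the formula $h_{[X,Y]}=X(h_Y)-Y(h_X)$ is $\bR$-bilinear in $h_X,h_Y$ and thus compatible with the vector space structure, which is why the same argument handles the weakly cosymplectic case uniformly with the cosymplectic one.
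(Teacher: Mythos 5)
Your proof is correct and matches the paper's intent exactly: the corollary is stated there as an immediate consequence of Lemma \ref{Lemma:cosymplectic}, with the bracket closure supplied by the lemma and the $\bR$-linearity left implicit. Your explicit check that $L_{aX+bY}\omega=-d(ah_X+bh_Y)\wedge\eta$ is the only (trivially verified) addition.
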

The Reeb f{}ield $\xi$ is cosymplectic. We denote by $\fX^{\mathrm{cosymp}}(M)\subset\fX(M)$ the Lie subalgebra of cosymplectic vector f{}ields. For a cosymplectic vector f{}ield $X$, the function
$\eta(X)$
is constant; we also consider the following subset of $\fX^{\mathrm{cosymp}}(M)$:
\[
\fX^{\mathrm{cosymp}}_0(M)=\{X\in \fX^{\mathrm{cosymp}}(M) \ | \ \eta(X)=0\}.
\]


\subsection{Hamiltonian vector f{}ields}
\begin{definition}
Let $(M,\eta,\omega)$ be a cosymplectic manifold, let $\xi$ denote the Reeb f{}ield and let $X\in\fX(M)$ be a vector f{}ield. $X$ is \textbf{weakly Hamiltonian} if $\eta(X)=0$ and there exists $f\in
C^\infty(M)$ such that 
$\imath_X\omega=df-\xi(f)\eta$. $X$ is \textbf{Hamiltonian} if, in addition, $f$ can be chosen to be invariant along the flow of $\xi$, i.e. $\xi(f)=0$. In both cases, $f$ is the \textbf{Hamiltonian
function} of $X$.
\end{definition}
If $X\in\fX(M)$ is a weakly Hamiltonian vector f{}ield then $X$ is not, in general, cosymplectic, but only weakly cosymplectic, and the functions $h_X$ and $\xi(f)$ can be chosen to coincide. We
denote by $X_f$ the weakly Hamiltonian 
vector f{}ield such that $\imath_X\omega=df-\xi(f)\eta$ and by $\fX_w^{\mathrm{ham}}(M)$ the set of weakly Hamiltonian f{}ields. When $X\in\fX(M)$ is Hamiltonian, then it is cosymplectic, and the
1-form
$\imath_X\omega$ is exact. 
We denote by $\fX^{\mathrm{ham}}(M)\subset\fX_0^{\mathrm{cosymp}}(M)$ the subset of Hamiltonian vector f{}ields. 

\begin{proposition}\label{ideal:1}
Let $X,Y$ be cosymplectic vector f{}ields. Then $[X,Y]$ is Hamiltonian with Hamiltonian function $-\omega(X,Y)$.
\end{proposition}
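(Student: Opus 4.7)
The plan is to verify the three conditions in the definition of a Hamiltonian vector field for the bracket $[X,Y]$ with candidate Hamiltonian function $f=-\omega(X,Y)$: that $\eta([X,Y])=0$, that $\imath_{[X,Y]}\omega=df-\xi(f)\eta$, and that $\xi(f)=0$.

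First, since $X,Y$ are cosymplectic, Lemma \ref{Lemma:cosymplectic} (applied with $h_X=h_Y=0$) immediately yields both that $\eta([X,Y])=0$ and that $[X,Y]$ is cosymplectic. Next I compute $\imath_{[X,Y]}\omega$ using the standard identity $\imath_{[X,Y]}=L_X\imath_Y-\imath_YL_X$ and then Cartan's magic formula:
\[
\imath_{[X,Y]}\omega=L_X\imath_Y\omega-\imath_Y L_X\omega=L_X\imath_Y\omega=d\imath_X\imath_Y\omega+\imath_X d\imath_Y\omega.
\]
The second term equals $\imath_X(L_Y\omega-\imath_Yd\omega)=0$ since $L_Y\omega=0$ and $d\omega=0$; the first term equals $d(\omega(Y,X))=-d(\omega(X,Y))$. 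Hence $\imath_{[X,Y]}\omega=d(-\omega(X,Y))$, which is already \emph{exact}.

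The main obstacle is to show that $-\omega(X,Y)$ is invariant under the Reeb flow, i.e., that $\xi(\omega(X,Y))=0$. The cleanest route is to establish the intermediate fact that every cosymplectic vector field commutes with the Reeb field: for cosymplectic $X$, one checks $\eta([\xi,X])=0$ (from $d\eta=0$ and the constancy of $\eta(X)$) and
\[
\imath_{[\xi,X]}\omega=L_\xi\imath_X\omega-\imath_XL_\xi\omega=d\imath_\xi\imath_X\omega+\imath_\xi d\imath_X\omega=0,
\]
using $\imath_\xi\omega=0$, $L_\xi\omega=0$, and $d\imath_X\omega=L_X\omega-\imath_Xd\omega=0$. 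By the isomorphism $\Psi$ of Proposition \ref{isomorphism}, these two vanishings force $[\xi,X]=0$, and analogously $[\xi,Y]=0$. Then from $L_\xi\omega=0$ we get
\[
\xi(\omega(X,Y))=(L_\xi\omega)(X,Y)+\omega([\xi,X],Y)+\omega(X,[\xi,Y])=0.
\]

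Combining these three ingredients, $[X,Y]$ is annihilated by $\eta$, satisfies $\imath_{[X,Y]}\omega=d(-\omega(X,Y))-\xi(-\omega(X,Y))\eta$ with $\xi(-\omega(X,Y))=0$, and therefore is Hamiltonian with Hamiltonian function $-\omega(X,Y)$, as claimed.
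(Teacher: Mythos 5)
Your proof is correct. The main computation --- showing that $\imath_{[X,Y]}\omega=d(-\omega(X,Y))$ --- reaches the same identity as the paper, only packaged differently: you use the operator identity $\imath_{[X,Y]}=L_X\imath_Y-\imath_Y L_X$ together with Cartan's formula, whereas the paper expands $d\omega(X,Y,Z)=0$ and $d(\imath_Y\omega)(X,Z)=0$ by hand and combines the resulting terms; these are essentially the same calculation. Where you genuinely diverge is in verifying $\xi(\omega(X,Y))=0$. The paper does this in one line directly from the already-established exactness: $\xi(\omega(X,Y))=\imath_\xi d(\omega(X,Y))=-\imath_\xi\imath_{[X,Y]}\omega=\imath_{[X,Y]}\imath_\xi\omega=0$. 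You instead first prove that every cosymplectic field commutes with $\xi$ (using the isomorphism $\Psi$ of Proposition \ref{isomorphism}, which is available at this point, so there is no circularity) and then apply the Leibniz rule for $L_\xi$. Your detour is longer but proves en route a fact the paper only records later, as part of Corollary \ref{commutation}, namely that $\xi$ is central in $\fX^{\mathrm{cosymp}}(M)$; the paper's route is shorter because it exploits $\imath_\xi\omega=0$ against the exact form it has just produced. Both arguments are complete and valid.
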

\begin{proof}
 By Lemma \ref{Lemma:cosymplectic}, $\eta([X,Y])=0$. Since $\omega$ is closed, we have
 \begin{align}\label{eq:555}
  0&=d\omega(X,Y,Z)\nonumber \\
  &=X\omega(Y,Z)-Y\omega(X,Z)+Z\omega(X,Y)-\omega([X,Y],Z)+\omega([X,Z],Y)-\omega([Y,Z],X).
 \end{align}
 Since $Y$ is cosymplectic,
 \begin{equation}\label{eq:556}
  0=d(\imath_Y\omega)(X,Z)=-(\imath_Y\omega)([X,Z])+X(\imath_Y\omega)(Z)-Z(\imath_Y\omega)(X);
 \end{equation}
Plugging \eqref{eq:556} into \eqref{eq:555} we get
 \begin{align*}
  (\imath_{[X,Y]}\omega)(Z)&=-Y(\imath_X\omega)(Z)+(\imath_X\omega)([Y,Z])=-(L_Y(\imath_X\omega))(Z)=-(d(\imath_Y\imath_X\omega))(Z),
\end{align*}
where we used that $X$ is cosymplectic in the last equality. Hence $\imath_{[X,Y]}\omega=-d(\omega(X,Y))$. We need to check that $\omega(X,Y)$ satisf{}ies $\xi\omega(X,Y)=0$. By def{}inition
\[
\xi\omega(X,Y)=d(\omega(X,Y))(\xi)=\imath_\xi d(\omega(X,Y))=-\imath_\xi\imath_{[X,Y]}\omega=\imath_{[X,Y]}\imath_\xi\omega=0.
\]
\end{proof}

\begin{proposition}[\cite{Albert}, Proposition 3]\label{ideal}
Let $Z$ be a weakly cosymplectic vector f{}ield and let $X_f$ be a weakly Hamiltonian vector f{}ield. Then the following two formul\ae$ \ \!\!$  hold true:
\begin{itemize}
\item $[\xi,Z]=X_{h_Z}$;
\item $[Z,X_f]=X_{Z(f)}$.
\end{itemize}
\end{proposition}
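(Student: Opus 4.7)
My plan is to verify each identity by checking the two defining conditions of being weakly Hamiltonian: that the vector field is annihilated by $\eta$ and that its contraction with $\omega$ has the prescribed form.

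For the first identity $[\xi,Z]=X_{h_Z}$, I would begin with $\eta([\xi,Z])$. Since $Z$ is weakly cosymplectic, $0=L_Z\eta=d\imath_Z\eta+\imath_Zd\eta=d(\eta(Z))$, so $\eta(Z)$ is locally constant. Together with $L_\xi\eta=0$, this gives $\eta([\xi,Z])=\xi(\eta(Z))-Z(\eta(\xi))-d\eta(\xi,Z)=0$. Next I compute $\imath_{[\xi,Z]}\omega$ using the commutation $L_\xi\imath_Z\omega=\imath_{[\xi,Z]}\omega+\imath_Z L_\xi\omega$. Because $L_\xi\omega=d\imath_\xi\omega+\imath_\xi d\omega=0$, this reduces to $\imath_{[\xi,Z]}\omega=L_\xi\imath_Z\omega=d\imath_\xi\imath_Z\omega+\imath_\xi d\imath_Z\omega$. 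The first term vanishes since $\omega(Z,\xi)=0$, and in the second I replace $d\imath_Z\omega$ by $L_Z\omega=-dh_Z\wedge\eta$ (as $d\omega=0$). Contracting with $\xi$ and using $\eta(\xi)=1$ gives
\[
\imath_{[\xi,Z]}\omega=\imath_\xi(-dh_Z\wedge\eta)=-\xi(h_Z)\eta+dh_Z,
\]
which is exactly the defining relation for $X_{h_Z}$.

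For the second identity, I would apply the same Cartan-calculus strategy: from $L_Z\imath_{X_f}\omega=\imath_{[Z,X_f]}\omega+\imath_{X_f}L_Z\omega$ I isolate
\[
\imath_{[Z,X_f]}\omega=L_Z(df-\xi(f)\eta)-\imath_{X_f}(-dh_Z\wedge\eta).
\]
Using $L_Z\eta=0$ and $\eta(X_f)=0$ this expands to $d(Z(f))-Z(\xi(f))\eta+X_f(h_Z)\eta$. To match the required form $d(Z(f))-\xi(Z(f))\eta$, I must show
\[
Z(\xi(f))-X_f(h_Z)=\xi(Z(f)),
\]
equivalently $[\xi,Z](f)=-X_f(h_Z)$. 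Here is the only delicate point, and I would resolve it by invoking part 1: $[\xi,Z]=X_{h_Z}$, so $[\xi,Z](f)=X_{h_Z}(f)$. The identity $X_{h_Z}(f)=-X_f(h_Z)$ then follows from antisymmetry of $\omega$, since $\omega(X_{h_Z},X_f)=(\imath_{X_{h_Z}}\omega)(X_f)=X_f(h_Z)-\xi(h_Z)\eta(X_f)=X_f(h_Z)$ and symmetrically $\omega(X_f,X_{h_Z})=X_{h_Z}(f)$ (both $\eta(X_f)$ and $\eta(X_{h_Z})$ vanish because these are weakly Hamiltonian fields).

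Finally, the condition $\eta([Z,X_f])=0$ is immediate from $L_Z\eta=0$ and $\eta(X_f)=0$ via $\imath_{[Z,X_f]}\eta=L_Z\imath_{X_f}\eta-\imath_{X_f}L_Z\eta=0$. The only genuine obstacle is the bookkeeping in the second part, specifically the antisymmetry argument that forces the cross-terms $X_{h_Z}(f)$ and $X_f(h_Z)$ to cancel; everything else is a direct application of Cartan's magic formula together with the closedness $d\eta=d\omega=0$ and the weak cosymplectic/Hamiltonian defining equations.
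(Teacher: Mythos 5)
Your proof is correct and follows essentially the same route as the paper's: verify $\eta$-annihilation and the defining contraction identity for each bracket, with the crucial cross-term cancellation in the second formula supplied by the relation $[\xi,Z](f)=X_{h_Z}(f)=-X_f(h_Z)$ deduced from the first formula --- exactly the paper's key intermediate step. The only difference is presentational: you package the computations via the operator identity $L_X\imath_Y-\imath_Y L_X=\imath_{[X,Y]}$ and Cartan's magic formula, whereas the paper expands $d\omega(X,Y,Z)=0$ pointwise; the content is the same.
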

\begin{proof}
Recall that the weakly Hamiltonian vector f{}ield $X_{h_Z}$ is def{}ined by the conditions $\eta(X_{h_Z})=0$ and $\imath_{X_{h_Z}}\omega=dh_Z-\xi(h_Z)\eta$. We prove that the same is true for
$[\xi,Z]$.
Now $\eta(Z)$ is constant, 
hence $\eta([\xi,Z])=0$. Furthermore,
\begin{align*}
\imath_{[\xi,Z]}\omega(Y)&=\omega([\xi,Z],Y)\stackrel{(\diamond)}{=}\omega(Z,[Y,\xi])+\xi\omega(Z,Y)=(\imath_Z\omega)([Y,\xi])+\xi(\imath_Z\omega)(Y)\\
&=-d(\imath_Z\omega)(Y,\xi)=(dh_Z\wedge\eta)(Y,\xi)=dh_Z(Y)-\xi(h_Z)\eta(Y),
\end{align*}
where $(\diamond)$ holds because $\omega$ is closed. For the second formula, it will be enough to prove that $\eta([Z,X_f])=0$ and $\imath_{[Z,X_f]}\omega=d(Z(f))-\xi(Z(f))\eta$. For the f{}irst
equality, we have
\[
\eta([Z,X_f])=Z\eta(X_f)-X_f\eta(Z)=0,
\]
since $\eta(X_f)=0$ and $\eta(Z)$ is a constant function.  Notice that 
\begin{align}\label{skew}
[\xi,Z](f)&= X_{h_Z}(f)=df(X_{h_Z})=\omega(X_f,X_{h_Z})=-dh_Z(X_f)=-X_f(h_Z).
\end{align} Next, recalling that $d\omega=0$, we have
\begin{align*}
(\imath_{[Z,X_f]}\omega)(Y)&=-\omega([X_f,Y],Z)-\omega([Y,Z],X_f)+Z\omega(X_f,Y)+X_f\omega(Y,Z)+Y\omega(Z,X_f)\\
&=-d(\imath_Z\omega)(X_f,Y)+df([Y,Z])-\xi(f)\eta([Y,Z])+Z(df(Y)-\xi(f)\eta(Y))\\
&=(dh_Z\wedge\eta)(X_f,Y)+Y(Z(f))-Z(\xi(f))\eta(Y)\\
&=X_f(h_Z)\eta(Y)+d(Z(f))(Y)-Z(\xi(f))\eta(Y)\\
&\stackrel{(\dag)}{=}[Z,\xi](f)\eta(Y)+d(Z(f))(Y)-Z(\xi(f))\eta(Y)\\
&=(d(Z(f))-\xi(Z(f))\eta)(Y),
\end{align*}
where $(\dag)$ holds in view of \eqref{skew}.
\end{proof}
\begin{corollary}\label{commutation}
Let $(M,\eta,\omega)$ be a cosymplectic manifold. Then 
\begin{itemize}
 \item $\xi$ is a central in $\fX^{\mathrm{cosymp}}(M)$;
 \item (weakly) Hamiltonian vector f{}ields form an ideal in (weakly) cosymplectic vector f{}ields.
\end{itemize}
\end{corollary}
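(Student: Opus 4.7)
The plan is to derive both statements directly from Proposition~\ref{ideal}, which already does the heavy lifting.

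\emph{Centrality of $\xi$.} Let $Z \in \fX^{\mathrm{cosymp}}(M)$. Since $Z$ is (strictly) cosymplectic we may take $h_Z = 0$. Proposition~\ref{ideal} then gives $[\xi,Z] = X_{h_Z} = X_0$, where $X_0$ is characterized by $\eta(X_0)=0$ and $\imath_{X_0}\omega = 0$. Under the isomorphism $\Psi$ of Proposition~\ref{isomorphism}, the pair $(0,0)\in \Omega^1_{\mathrm{hor}}(M)\oplus C^\infty(M)$ corresponds to the zero vector field, so $X_0 = 0$ and $[\xi,Z]=0$.

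\emph{The weakly Hamiltonian case.} Let $Z$ be weakly cosymplectic and let $X_f$ be weakly Hamiltonian. The second formula of Proposition~\ref{ideal} gives
\[
[Z,X_f] \;=\; X_{Z(f)},
\]
and $X_{Z(f)}$ is, by construction, a weakly Hamiltonian vector field with Hamiltonian function $Z(f)$. Hence $\fX^{\mathrm{ham}}_w(M)$ is an ideal in $\fX^{\mathrm{cosymp}}_w(M)$ (viewed as Lie subalgebras of $\fX(M)$; closedness under bracket of weakly cosymplectic fields follows from Lemma~\ref{Lemma:cosymplectic}).

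\emph{The strict Hamiltonian case.} Now assume $Z$ is cosymplectic and $X_f$ is Hamiltonian, so in particular $\xi(f)=0$. Again $[Z,X_f]=X_{Z(f)}$ is weakly Hamiltonian; to promote this to strict Hamiltonicity we must verify $\xi(Z(f))=0$. Writing
\[
\xi(Z(f)) \;=\; Z(\xi(f)) + [\xi,Z](f),
\]
the first term vanishes because $X_f$ is Hamiltonian, and the second vanishes by the centrality of $\xi$ just proved. Hence $X_{Z(f)}$ is Hamiltonian, and $\fX^{\mathrm{ham}}(M)$ is an ideal in $\fX^{\mathrm{cosymp}}(M)$.

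There is no serious obstacle here: both assertions are formal consequences of Proposition~\ref{ideal} combined with the isomorphism of Proposition~\ref{isomorphism} (for uniqueness of the vector field associated to a Hamiltonian function). The only point that requires any care is noticing that the identity $\xi(Z(f)) = Z(\xi(f)) + [\xi,Z](f)$ forces the two ingredients — centrality of $\xi$ and $\xi$-invariance of $f$ — to combine precisely so as to preserve the strict Hamiltonian condition.
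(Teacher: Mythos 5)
Your proof is correct and follows exactly the route the paper intends: the corollary is stated there without proof as an immediate consequence of Proposition~\ref{ideal}, and you simply make explicit the steps left implicit (taking $h_Z=0$ for a strictly cosymplectic $Z$ to get $[\xi,Z]=X_0=0$, and using $[Z,X_f]=X_{Z(f)}$ together with $\xi(Z(f))=Z(\xi(f))+[\xi,Z](f)$ for the strict Hamiltonian case). Nothing is missing.
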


Using this we obtain the following result:
\begin{lemma}\label{lem:cosymp0ideal}
$\fX^{\mathrm{cosymp}}_0(M)\subset \fX^{\mathrm{cosymp}}(M)$ is an ideal and there is a Lie algebra isomorphism
\begin{equation}\label{cosymplectic_vector_fields}
\fX^{\mathrm{cosymp}}(M)\cong\fX^{\mathrm{cosymp}}_0(M)\oplus\langle\xi\rangle
\end{equation}
\end{lemma}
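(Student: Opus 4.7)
The plan is to read off all three claims (ideal, direct sum, Lie algebra isomorphism) from results established earlier in the section, principally Proposition~\ref{ideal:1} and Corollary~\ref{commutation}.

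First I would verify that $\fX^{\mathrm{cosymp}}_0(M)$ is an ideal in $\fX^{\mathrm{cosymp}}(M)$. Given $X\in \fX^{\mathrm{cosymp}}_0(M)$ and $Y\in \fX^{\mathrm{cosymp}}(M)$, both are cosymplectic, so Proposition~\ref{ideal:1} immediately gives that $[X,Y]$ is Hamiltonian; in particular $\eta([X,Y])=0$, so $[X,Y]\in \fX^{\mathrm{cosymp}}_0(M)$. (One could also use Lemma~\ref{Lemma:cosymplectic} alone to get $\eta([X,Y])=0$, which is all that is strictly needed here.)

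Next, for the vector space decomposition, I would use the splitting
\[
X=\bigl(X-\eta(X)\xi\bigr)+\eta(X)\xi,
\]
observing that $\eta(X)$ is a constant function on $M$ whenever $X$ is cosymplectic (since $L_X\eta=0$ and $M$ is implicitly connected in this discussion). The first summand lies in $\fX^{\mathrm{cosymp}}(M)$ since $\xi$ is cosymplectic and the class is closed under $\bR$-linear combinations, and it is annihilated by $\eta$ because $\eta(\xi)=1$; hence it belongs to $\fX^{\mathrm{cosymp}}_0(M)$. The second summand lies in $\langle\xi\rangle$. The intersection $\fX^{\mathrm{cosymp}}_0(M)\cap\langle\xi\rangle$ is trivial, because $c\xi$ satisfies $\eta(c\xi)=c$, which forces $c=0$.

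Finally, to promote this to a Lie algebra isomorphism, I would invoke Corollary~\ref{commutation}, according to which $\xi$ is central in $\fX^{\mathrm{cosymp}}(M)$. Thus $[\xi,\fX^{\mathrm{cosymp}}_0(M)]=0$, and $\langle\xi\rangle$ is itself an abelian ideal. Combined with the fact that $\fX^{\mathrm{cosymp}}_0(M)$ is an ideal, the bracket on $\fX^{\mathrm{cosymp}}(M)$ restricts to $\fX^{\mathrm{cosymp}}_0(M)$ and vanishes on the $\langle\xi\rangle$-factor, giving the stated direct sum of Lie algebras. There is no real obstacle in this proof; the only subtlety to be careful about is the (standing) connectedness assumption on $M$ needed so that the constant $\eta(X)$ is a single real number, which allows the projection $X\mapsto\eta(X)\xi$ to land in $\langle\xi\rangle$ rather than in the larger module of $\xi$-multiples by locally constant functions.
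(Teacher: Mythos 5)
Your proposal is correct and follows essentially the same route as the paper: the same decomposition $X\mapsto (X-\eta(X)\xi,\eta(X)\xi)$, with the ideal property and the centrality of $\xi$ (Corollary \ref{commutation}) upgrading the vector space splitting to one of Lie algebras. The extra details you supply (constancy of $\eta(X)$, triviality of the intersection, the connectedness caveat) are all sound and merely flesh out what the paper leaves implicit.
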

\begin{proof}
The f{}irst statement is clear. For the second one, we consider the map $\fX^{\mathrm{cosymp}}(M)\to\fX^{\mathrm{cosymp}}_0(M)\oplus\langle\xi\rangle$ def{}ined by $X\mapsto
(X-\eta(X)\xi,\eta(X)\xi)$. Its inverse is simply 
$(X,\xi)\mapsto X+\xi$. Since $\xi$ is central in $\fX^{\mathrm{cosymp}}(M)$ and $\fX^{\mathrm{cosymp}}_0(M)$ is in ideal, the splitting \eqref{cosymplectic_vector_fields} is also of Lie algebras.
\end{proof}

We denote by $C^\infty_\xi(M)$ the subalgebra of $C^\infty(M)$ consisting of functions that are constant along the flow lines of $\xi$, namely
\[
C^\infty_\xi(M)=\{f\in C^\infty(M) \ | \ \xi(f)=0\}.
\]
\begin{remark}
Let $(M,\eta,\omega)$ be a compact K-cosymplectic manifold. We can characterize cosymplectic and Hamiltonian vector f{}ields in terms of the isomorphism $\Psi\colon \fX(M)\to
\Omega^1_{\textrm{hor}}(M)\oplus C^\infty(M)$ of Proposition 
\ref{isomorphism} as follows:
\begin{itemize}
\item $\fX^{\mathrm{cosymp}}(M)=\Psi^{-1}(\Omega^1_{\textrm{hor}}(M)\cap\ker d,\bR)$;
\item $\fX^{\mathrm{cosymp}}_0(M)=\Psi^{-1}(\Omega^1_{\textrm{hor}}(M)\cap\ker d,0)$;
\item $\fX^{\mathrm{ham}}(M)=\Psi^{-1}(d(C^\infty_\xi(M)),0)$.
\end{itemize}
\end{remark}
\begin{remark}
The subalgebra $C^\infty_\xi(M)$ is invariant under the action of the group of cosymplectomorphisms of $M$. This holds because $\psi$ respects the characteristic foliation by Remark
\ref{Reeb_preserved}.
\end{remark}

Let $(M,\eta,\omega)$ be a cosymplectic manifold. We want to f{}ind suf{}f{}icient conditions under which a cosymplectic vector f{}ield is Hamiltonian. If $M$ is
compact then $H^1(M;\bR)\neq 0$ (see Remark \ref{first_Betti_number}), hence we will not be able, in general, to solve the equation $d(\imath_X\omega)=0$, i.e. to f{}ind a function $f\in
C_\xi^\infty(M)$ such that $X_f=X$. However, we can handle some special cases.
\begin{proposition}\label{b_1=1}
Let $(M,\eta,\omega)$ be a compact cosymplectic manifold with $b_1(M)=1$. Then a vector f{}ield in $\fX^{\mathrm{cosymp}}_0(M)$ is Hamiltonian.
\end{proposition}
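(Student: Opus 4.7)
The plan is straightforward once one recognizes how the hypothesis $b_1(M)=1$ interacts with the horizontal/vertical splitting. Let $X\in\fX^{\mathrm{cosymp}}_0(M)$, so by definition $\eta(X)=0$ and $L_X\omega=0$. I would first note that $\imath_X\omega$ is a closed $1$-form, because $d(\imath_X\omega)=L_X\omega-\imath_X d\omega=0$, and that it is horizontal since $\imath_\xi\imath_X\omega=-\imath_X\imath_\xi\omega=0$.

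Next I would bring in the cohomological hypothesis. By Remark \ref{first_Betti_number}, $[\eta]\neq 0$ in $H^1(M;\bR)$, and $b_1(M)=1$ then forces $H^1(M;\bR)=\bR[\eta]$. Hence there exist a constant $c\in\bR$ and a function $f\in C^\infty(M)$ with
\[
\imath_X\omega=c\,\eta+df.
\]
Contracting this identity with $\xi$ and using $\imath_\xi\imath_X\omega=0$ and $\eta(\xi)=1$ yields $0=c+\xi(f)$, so $\xi(f)=-c$ is a \emph{constant} function on $M$. This is the decisive promotion: a priori $\xi(f)$ is just a function, but the one-dimensionality of $H^1(M;\bR)$ pins it down to a scalar. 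At this stage $X$ is already exhibited as weakly Hamiltonian with function $f$.

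The final step uses compactness of $M$ to force $c=0$. Since $\xi(f)\equiv -c$, the function $f$ evolves linearly along the Reeb flow $\phi_t$: $f(\phi_t(p))=f(p)-ct$ for all $t\in\bR$ and $p\in M$. If $c\neq 0$, this expression is unbounded in $t$, contradicting continuity of $f$ on the compact manifold $M$. Therefore $c=0$, so $\xi(f)=0$ and $\imath_X\omega=df$ with $f\in C_\xi^\infty(M)$; this exhibits $X$ as the Hamiltonian vector field $X_f$.

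I do not anticipate a serious obstacle: the key observation is that $c$ is a real constant (rather than a function), and this is precisely what the hypothesis $b_1(M)=1$ buys us. Were $\xi(f)$ only constrained to, say, have zero mean along closed Reeb orbits, one would need an averaging argument over the closure of the Reeb flow in the isometry group (using K-cosymplecticity), but the assumption on the first Betti number sidesteps any such complication entirely.
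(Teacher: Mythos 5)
Your proof is correct and is essentially identical to the paper's: both write $\imath_X\omega$ as a multiple of $\eta$ plus an exact form using $b_1(M)=1$ and $[\eta]\neq 0$, contract with $\xi$ to identify the coefficient with $\mp\xi(f)$, and then rule out a nonzero coefficient by observing that $f$ would grow linearly (hence unboundedly) along Reeb orbits, contradicting compactness. The only cosmetic difference is that you invoke Remark \ref{first_Betti_number} for $[\eta]\neq 0$ where the paper cites Proposition \ref{basic_cohomology}, which if anything fits the purely cosymplectic hypothesis better.
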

\begin{proof}
Since $b_1(M)=1$, $H^1(M;\bR)=\langle[\eta]\rangle$ by Proposition \ref{basic_cohomology}. As $X\in\fX^{\mathrm{cosymp}}_0(M)$, $\imath_X\omega$ is a closed
1-form, hence it def{}ines a cohomology class in $H^1(M;\bR)$. 
We have then $\imath_X\omega=a\eta-df$ for some $f\in C^\infty(M)$ and $a\in\bR$. 
We would like to show that $a=0$. Plugging the Reeb f{}ield in the last equation, we obtain $a=\xi(f)$. Fix a point $p\in M$ and 
let $\gamma=\gamma(t)$ be the integral curve of $\xi$ passing through $p$ at time $t=0$. Then
\[
\frac{d}{dt}\Big|_{t=0}(f\circ\gamma)(t)=df(\xi)=\xi(f)=a.
\]
This shows that $(f\circ\gamma)(t)=at+b$ for $b\in\bR$. Now, if $a\neq 0$, the image of $f\circ\gamma$ is an unbounded set in $\bR$, hence not compact; but this is absurd, since $M$ is compact. Therefore $a=0$.
\end{proof}

\begin{proposition}\label{criterion2}
Let $(M,\eta,\omega)$ be a compact K-cosymplectic manifold. Then there is an exact sequence of Lie algebras
\[
0\longrightarrow\fX^{\mathrm{ham}}(M)\stackrel{i}{\longrightarrow} \fX^{\mathrm{cosymp}}_0(M)\stackrel{\pi}{\longrightarrow} H^1(M;\cF_\xi)\longrightarrow 0,
\]
where $i$ is inclusion and $\pi(X)=[\imath_X\omega]$. Here $H^1(M;\cF_\xi)$ is intended as an abelian Lie algebra.
\end{proposition}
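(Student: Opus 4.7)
The plan is to verify the three exactness properties of the sequence in turn, using Proposition \ref{isomorphism} (which identifies $\fX(M)$ with $\Omega^1_{\mathrm{hor}}(M)\oplus C^\infty(M)$) and the characterizations of $\fX^{\mathrm{cosymp}}_0(M)$ and $\fX^{\mathrm{ham}}(M)$ sitting inside this decomposition.

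First I would check that $\pi$ is well-defined as a map into $H^1(M;\cF_\xi)$. If $X\in\fX^{\mathrm{cosymp}}_0(M)$, then $\imath_\xi(\imath_X\omega)=-\imath_X(\imath_\xi\omega)=0$, so $\imath_X\omega$ is horizontal; since $\omega$ is closed and $L_X\omega=0$, one has $d(\imath_X\omega)=L_X\omega-\imath_X d\omega=0$; and then $\imath_\xi d(\imath_X\omega)=0$ trivially. So $\imath_X\omega$ is a closed basic $1$-form, and passing to cohomology is well-defined.

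Next I would establish surjectivity. Given a closed basic $1$-form $\alpha$, apply Proposition \ref{isomorphism} to the pair $(\alpha,0)\in\Omega^1_{\mathrm{hor}}(M)\oplus C^\infty(M)$ to obtain a unique $X\in\fX(M)$ with $\imath_X\omega=\alpha$ and $\eta(X)=0$. Then $L_X\omega=d\alpha+\imath_X d\omega=0$ and $L_X\eta=d(\eta(X))+\imath_X d\eta=0$, so $X\in\fX^{\mathrm{cosymp}}_0(M)$ and $\pi(X)=[\alpha]$.

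For exactness at the middle term, suppose $\pi(X)=0$. Then $\imath_X\omega$ is basic-exact, so $\imath_X\omega=df$ for some $f\in C^\infty_\xi(M)$, and combined with $\eta(X)=0$ this is precisely the definition of $X$ being Hamiltonian with Hamiltonian function $f$. Conversely, if $X\in\fX^{\mathrm{ham}}(M)$ with $\imath_X\omega=df-\xi(f)\eta=df$ (using $\xi(f)=0$) and $\eta(X)=0$, then $\pi(X)=[df]=0$ in $H^1(M;\cF_\xi)$ since $f$ is basic.

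Finally I would verify that $\pi$ is a Lie algebra homomorphism, where $H^1(M;\cF_\xi)$ is given the abelian structure. Since $\fX^{\mathrm{ham}}(M)$ is an ideal in $\fX^{\mathrm{cosymp}}_0(M)$ by Corollary \ref{commutation}, the required statement reduces to $\pi([X,Y])=0$ for all $X,Y\in\fX^{\mathrm{cosymp}}_0(M)$, which is immediate from Proposition \ref{ideal:1}: $[X,Y]$ is Hamiltonian, hence lies in the kernel of $\pi$ by the previous step. The only subtle point is really confirming that the extra condition $\imath_\xi\,d(\imath_X\omega)=0$ for basicness is automatic, which follows from $d(\imath_X\omega)=0$; apart from that the argument is essentially a bookkeeping of the isomorphism of Proposition \ref{isomorphism}.
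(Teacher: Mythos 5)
Your proof is correct and follows essentially the same route as the paper: well-definedness via $\imath_\xi\imath_X\omega=0$, surjectivity via the isomorphism of Proposition \ref{isomorphism}, exactness in the middle from the definition of Hamiltonian fields, and the homomorphism property from Proposition \ref{ideal:1}. The only cosmetic difference is that you check directly that $\imath_X\omega$ is a closed basic form, whereas the paper lands first in $H^1(M;\bR)$ and then uses the splitting $H^1(M;\bR)=H^1(M;\cF_\xi)\oplus\bR[\eta]$; both are fine.
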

\begin{proof}
It is clear that $i$ is injective. We show that $\pi$ is well def{}ined. If $X\in \fX^{\mathrm{cosymp}}_0(M)$, then $\eta(X)=0$ and $\imath_X\omega$ is closed. Therefore $\imath_X\omega\in
H^1(M;\bR)$.
In view of Proposition 
\ref{basic_cohomology}, $H^1(M;\bR)=H^1(M;\cF_\xi)\oplus\bR[\eta]$. To show that $\pi(X)\in H^1(M;\cF_\xi)$ we must show that $\imath_\xi(\imath_X\omega)=0$. We have
$\imath_\xi(\imath_X\omega)=-\imath_X(\imath_\xi\omega)=0$. Take $X\in \fX^{\mathrm{ham}}(M)$; then $\imath_X\omega=df$ for some $f\in C^\infty_\xi(M)$, hence $\pi(X)=[df]=0$.
Take $X\in\fX^{\mathrm{cosymp}}_0(M)$ with $\pi(X)=0$. Then $\imath_X\omega=df$ with $\imath_\xi(\imath_X\omega)=0$; hence $\imath_\xi df=\xi(f)=0$ and $f\in C^\infty_\xi(M)$; thus
$X\in\fX^{\mathrm{ham}}(M)$ and $\ker(\pi)=\mathrm{im}(i)$. We prove surjectivity of $\pi$. Take $[\alpha]\in H^1(M;\cF_\xi)$; then $d\alpha=0$ and $\alpha(\xi)=0$. Since $\alpha\in\Omega^1(M;\bR)$,
there exists $X\in\fX(M)$ such that 
$\alpha=\imath_X\omega+\eta(X)\eta$; now $\alpha(\xi)=0$; therefore, contracting with $\xi$, we see that $\eta(X)=0$ and $\alpha=\imath_X\omega$. Since $\alpha$ is closed, $d(\imath_X\omega)=0$, hence
$X\in\fX^{\mathrm{cosymp}}_0(M)$ and 
$\pi$ is surjective. We are left with showing that $\pi$ is a Lie algebra homomorphism. By def{}inition we have $[\pi(X),\pi(Y)]=0$. On the other hand, by Proposition \ref{ideal:1},
\[
\pi([X,Y])=[\imath_{[X,Y]}\omega]=[-d(\imath_Y\imath_X\omega)]=0.
\]
\end{proof}

\begin{remark}
Proposition \ref{criterion2} follows from Propostion \ref{b_1=1} when the K-cosymplectic manifold $M$ has $b_1(M)=1$.
\end{remark}


\subsection{The Poisson structure}
Here we use the Lie algebra structure of (weakly) Hamiltonian vector f{}ields to induce a skew-symmetric bilinear form on $C^\infty(M)$.
\begin{definition}
Let $(M,\eta,\omega)$ be a cosymplectic manifold. The \textbf{Poisson bracket} is the bilinear map $\{\cdot,\cdot\}\colon C^\infty(M)\times C^\infty(M)\to C^\infty(M)$ def{}ined by
\[
\{f,g\}=X_f(g).
\]
\end{definition}
\begin{proposition}
The Poisson bracket is skew-symmetric and satisf{}ies the following properties:
\begin{enumerate}
\item $\{f+g,h\}=\{f,h\}+\{g,h\}$;
\item $\{f,g\}=-\omega(X_f,X_g)$; in particular, $\{\cdot,\cdot\}$ is skew-symmetric;
\item $[X_f,X_g]=X_{\{f,g\}}$;
\item $\{f,gh\}=\{f,g\}h+g\{f,h\}$;
\item $\{\{f,g\},h\}+\{\{g,h\},f\}+\{\{h,f\},g\}=0$.
\end{enumerate}
As a consequence, $(C^\infty(M),\{\cdot,\cdot\})$ is a Lie algebra and the map
\[
\begin{array}{rccc}
\Theta\colon & (C^\infty(M),\{\cdot,\cdot\}) & \to & (\fX_w^{\mathrm{ham}}(M),[\cdot,\cdot])\\
& f &\mapsto &X_f
\end{array}
\]
is a morphism of Lie algebras.
\end{proposition}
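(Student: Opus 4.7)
The plan is to establish the properties in an order that makes each one feed into the next, leaning on Proposition \ref{isomorphism} (which in particular makes $X_f$ well-defined for every $f$, since $df-\xi(f)\eta$ is horizontal) and on Proposition \ref{ideal}. First I would verify the linearity $X_{f+g}=X_f+X_g$: both sides are horizontal with the same value under $\imath_\bullet\omega$, so they agree by Proposition \ref{isomorphism}, from which (1) is immediate.

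Next I would prove (2) by a direct computation: using $\imath_{X_g}\omega = dg - \xi(g)\eta$ and $\eta(X_f)=0$, one has
\[
\{f,g\} = X_f(g) = dg(X_f) = \omega(X_g,X_f) + \xi(g)\eta(X_f) = -\omega(X_f,X_g),
\]
which in turn makes $\{\cdot,\cdot\}$ skew-symmetric. The Leibniz rule (4) is just the Leibniz rule for the derivation $X_f$ applied to a product, so this is automatic. For (3), I would apply Proposition \ref{ideal} with $Z=X_f$ (which is weakly cosymplectic because $X_f$ is weakly Hamiltonian) and the weakly Hamiltonian field $X_g$, obtaining
\[
[X_f,X_g] = X_{X_f(g)} = X_{\{f,g\}}.
\]

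The Jacobi identity (5) is the heart of the matter, though once (3) is in hand it is essentially formal. I would compute
\[
\{\{f,g\},h\} = X_{\{f,g\}}(h) = [X_f,X_g](h) = X_f(X_g(h)) - X_g(X_f(h)) = \{f,\{g,h\}\} - \{g,\{f,h\}\},
\]
and then use the skew-symmetry established in (2) to rewrite this as $\{\{f,g\},h\} + \{\{g,h\},f\} + \{\{h,f\},g\} = 0$. The final claim, that $\Theta\colon f\mapsto X_f$ is a Lie algebra morphism, is then precisely a repackaging of (1) and (3).

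The only potential obstacle is to make sure that the map $f\mapsto X_f$ is well-defined and $\bR$-linear; this is where Proposition \ref{isomorphism} is essential, since without the splitting $\Omega^1(M) = \Omega^1_{\mathrm{hor}}(M)\oplus\langle\eta\rangle$ there would be an ambiguity in recovering $X_f$ from the 1-form $df - \xi(f)\eta$. Everything else is a manipulation of the identity $\imath_{X_f}\omega = df - \xi(f)\eta$ together with Proposition \ref{ideal}.
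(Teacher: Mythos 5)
Your proof is correct. Note that the paper itself gives no proof of this proposition: the remark immediately following it simply refers the reader to Albert's paper (with a warning about a different sign convention), so there is no in-paper argument to compare against. Your self-contained argument is the standard one and uses the paper's own tools correctly: Proposition \ref{isomorphism} guarantees that $f\mapsto X_f$ is well defined and additive (giving (1)); the identity $\imath_{X_g}\omega=dg-\xi(g)\eta$ together with $\eta(X_f)=0$ gives (2); the Leibniz rule for the derivation $X_f$ gives (4); the second formula of Proposition \ref{ideal} applied with $Z=X_f$ (weakly cosymplectic because weakly Hamiltonian, with $h_{X_f}=\xi(f)$) gives (3); and (5) then follows formally from (3) and the skew-symmetry in (2). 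The statement about $\Theta$ is, as you say, a repackaging of (1) and (3).
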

\begin{remark}
A proof of this Proposition can be found in \cite{Albert}; notice that we are using a dif{}ferent sign convention.
\end{remark}
\begin{corollary}\label{morphism_theta}
$C^\infty_\xi(M)\subset C^\infty(M)$ is a subalgebra. The map $\Theta$ induces a Lie algebra morphism
\[
\begin{array}{rccc}
\bar{\Theta}\colon & (C^\infty_\xi(M),\{\cdot,\cdot\}) & \to & (\fX^{\mathrm{ham}}(M),[\cdot,\cdot])\\
& f &\mapsto &X_f
\end{array}
\]
\end{corollary}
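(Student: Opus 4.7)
The plan is to verify three things in order: (i) $C^\infty_\xi(M)$ is closed under pointwise addition, multiplication, \emph{and} the Poisson bracket; (ii) for $f\in C^\infty_\xi(M)$ the vector field $X_f$ lies in $\fX^{\mathrm{ham}}(M)$ (not merely in $\fX^{\mathrm{ham}}_w(M)$); and (iii) the restriction $\bar\Theta$ of $\Theta$ is a Lie algebra morphism.

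For (i), closure under $+$ and $\cdot$ is immediate from the Leibniz rule for $\xi$. For closure under $\{\cdot,\cdot\}$, the key observation is that when $f\in C^\infty_\xi(M)$, the weakly Hamiltonian field $X_f$ is actually Hamiltonian in the strong sense: by definition $\imath_{X_f}\omega=df-\xi(f)\eta=df$, hence $L_{X_f}\omega=d\,\imath_{X_f}\omega=0$, so one may take $h_{X_f}=0$. Proposition \ref{ideal} then gives $[\xi,X_f]=X_{h_{X_f}}=X_0=0$. Therefore, for $f,g\in C^\infty_\xi(M)$,
\[
\xi(\{f,g\})=\xi(X_f(g))=X_f(\xi(g))+[\xi,X_f](g)=X_f(0)+0=0,
\]
so $\{f,g\}\in C^\infty_\xi(M)$.

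For (ii), the computation above shows precisely that for $f\in C^\infty_\xi(M)$ the Hamiltonian function can be chosen $\xi$-invariant, so $X_f\in\fX^{\mathrm{ham}}(M)$; hence $\bar\Theta$ is a well-defined map $C^\infty_\xi(M)\to \fX^{\mathrm{ham}}(M)$. For (iii), the morphism property is inherited from $\Theta$: the identity $[X_f,X_g]=X_{\{f,g\}}$ of the preceding proposition, applied to $f,g\in C^\infty_\xi(M)$, together with (i), yields $\bar\Theta(\{f,g\})=X_{\{f,g\}}=[X_f,X_g]=[\bar\Theta(f),\bar\Theta(g)]$.

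The only nontrivial step is recognising that once $f$ is $\xi$-invariant, $X_f$ is truly cosymplectic and commutes with $\xi$; everything else then reduces to bookkeeping, using the already-established Lie algebra structure on $(C^\infty(M),\{\cdot,\cdot\})$ and the formula $[\xi,Z]=X_{h_Z}$ from Proposition \ref{ideal}.
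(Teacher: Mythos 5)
Your proof is correct and follows essentially the same route as the paper: the whole point is that $\{f,g\}=X_f(g)$ is again $\xi$-invariant because $[\xi,X_f]=0$, which the paper gets from Corollary \ref{commutation} and you get by unpacking that corollary via Proposition \ref{ideal} (namely $[\xi,X_f]=X_{h_{X_f}}=X_0=0$ once $h_{X_f}$ can be taken to vanish). Your additional verification that $X_f\in\fX^{\mathrm{ham}}(M)$ for $f\in C^\infty_\xi(M)$ is a harmless (and correct) spelling-out of what the paper treats as immediate from the definitions.
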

\begin{proof}
It is enough to prove that if $f,g\in C^\infty_\xi(M)$, so does $\{f,g\}$. This is checked as follows:
\begin{align*}
\xi(\{f,g\})&=\xi(X_f(g))=[\xi,X_f](g)=0,
\end{align*}
where we used Corollary \ref{commutation} in the last equality.
\end{proof}

We obtain a short exact sequence of Lie algebras
\[
0\longrightarrow\bR\longrightarrow C^\infty_\xi(M)\longrightarrow \fX^{\mathrm{ham}}(M)\longrightarrow 0,
\]
where $\bR$ represents constant functions.


\section{Deformations of cosymplectic structures}\label{section:deformations}

In this section we describe a type of deformation of (K-)cosymplectic manifolds, respectively structures, which are a generalization of the so-called {\bf deformations of type I}, see \cite[Section
8.2.3]{Boyer_Galicki}.
Deformations of this kind were introduced by Takahashi (see \cite{Takahashi}), in the particular context of Sasakian structures. Also, for the special case in which the vector f{}ield $\theta$ (see
below) is a scalar multiple of the Reeb f{}ield, they had been studied previously by Tanno (see \cite{Tanno}). 
Such deformations modify the characteristic foliation $\cF_\xi$ but preserve the distribution $\cD=\ker \eta$.

\subsection{Deformations of almost contact metric structures}

We will f{}irst describe the type of deformations for an arbitrary almost contact metric structure. 
Below we will apply it to (K-)cosymplectic and coK\"ahler structures; afterwards we also mention that they can be formulated in the setting of a (K-)cosymplectic manifold, without any f{}ixed
metric.

\begin{proposition}\label{deformation_1}
Let $(\eta,\xi,\phi,g)$ be an almost contact metric structure on a manifold $M$. Assume $\theta$ is a vector f{}ield on $M$ satisfying $1+\eta(\theta)> 0$. Set
\begin{eqnarray*}
\xi'& = & \xi+\theta\\
\eta' & = & \frac{\eta}{\eta(\xi')}\\
\phi' & = & \phi\circ(\Id-\eta'\otimes \xi')\\
g' & = & \frac{1}{\eta(\xi')}g\circ(\Id-\eta'\otimes \xi',\Id-\eta'\otimes \xi')+\eta'\otimes\eta'. 
\end{eqnarray*}
Then $(\eta',\xi',\phi',g')$ is again an almost contact metric structure on $M$. Its K\"ahler form is given by
\begin{equation}\label{deformed_Kahler_form}
\omega'=\frac{1}{\eta(\xi')}(\omega+(\imath_\theta\omega)\wedge\eta'). 
\end{equation}
\end{proposition}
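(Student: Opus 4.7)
The plan is to verify axioms (i)--(iii) for an almost contact metric structure in turn, and then to compute the associated K\"ahler form. The device used throughout is the decomposition $X = \eta'(X)\xi' + X'$ with $X' := X - \eta'(X)\xi'$; then $\eta'(X') = 0$ and, because $\eta'(X)\eta(\xi') = \eta(X)$, also $\eta(X') = 0$. Axiom (i) is immediate from the definition of $\eta'$: $\eta'(\xi') = \eta(\xi')/\eta(\xi') = 1$.

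For axiom (ii), I would first observe that $\eta \circ \phi = 0$ forces $\eta' \circ \phi = 0$, and hence $\eta'(\phi' X) = \eta'(\phi X') = 0$. Iterating then gives $(\phi')^2 X = \phi(\phi X') = \phi^2(X')$, and applying $\phi^2 = -\Id + \eta \otimes \xi$ together with $\eta(X') = 0$ produces $-X' = -X + \eta'(X)\xi'$, as required. Symmetry of $g'$ is clear; for positive-definiteness I would use the identity $g'(X,X) = \eta(\xi')^{-1} g(X', X') + \eta'(X)^2$, which is nonnegative since $\eta(\xi') = 1 + \eta(\theta) > 0$, and which vanishes only when $X' = 0$ and $\eta'(X) = 0$, i.e.\ $X = 0$.

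For the compatibility (iii), using $\eta'(\phi X') = 0$ and $\eta(X') = 0$, the definition of $g'$ collapses the desired identity $g'(\phi' X, \phi' Y) = g'(X,Y) - \eta'(X)\eta'(Y)$ to $g(\phi X', \phi Y') = g(X', Y')$, which is an immediate consequence of the original compatibility of $g$ and the vanishing of $\eta(X'), \eta(Y')$. This completes the verification that $(\eta', \xi', \phi', g')$ is an almost contact metric structure.

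Finally, for the K\"ahler form $\omega'(X,Y) = g'(X, \phi' Y)$, I would apply the same decomposition to both arguments. Since $\phi' Y = \phi Y'$ and $\eta'(\phi Y') = 0$, the second summand in the definition of $g'$ drops out, leaving $\omega'(X,Y) = \eta(\xi')^{-1} [\omega(X, Y') - \eta'(X)\,\omega(\xi', Y')]$. Using $\imath_\xi \omega = 0$ (equivalent to $\eta \circ \phi = 0$) together with $\omega(\theta, \xi') = \omega(\theta, \xi) + \omega(\theta, \theta) = 0$, one expands $\omega(X, Y') = \omega(X,Y) + \eta'(Y)(\imath_\theta\omega)(X)$ and $\omega(\xi', Y') = (\imath_\theta\omega)(Y)$, yielding
\[
\omega'(X,Y) = \frac{1}{\eta(\xi')}\bigl[\omega(X,Y) + ((\imath_\theta\omega) \wedge \eta')(X,Y)\bigr],
\]
which is exactly \eqref{deformed_Kahler_form}. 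The only real obstacle is sign bookkeeping in this last step, which becomes transparent once one notes $\omega(X, \xi') = -(\imath_\theta\omega)(X)$.
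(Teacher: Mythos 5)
Your proof is correct and follows essentially the same route as the paper's: a direct verification of the three axioms and a direct expansion of $g'(X,\phi'Y)$, with the projection $X\mapsto X'=X-\eta'(X)\xi'$ (which the paper writes out inline rather than naming) doing all the work. The auxiliary identities you isolate, namely $\eta(X')=\eta'(X')=\eta'\circ\phi=0$ and $\omega(X,\xi')=-(\imath_\theta\omega)(X)$, are exactly the ones the paper uses implicitly, so nothing is missing.
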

\begin{proof}
First of all it is clear that $\eta'(\xi')=1$. Next we need to verify that $(\phi')^2=-\Id+\eta'\otimes\xi'$. Notice that $\eta'\circ\phi=0$, because $\ker \eta=\ker \eta'$. Thus we have
\begin{align*}
(\phi')^2(X)&=\phi'(\phi(X)-\eta'(X)\phi(\xi'))=\phi^2(X)-\eta'(X)\phi^2(\xi')\\
&=-X+\eta(X)\xi+\eta'(X)\xi'-\eta'(X)\eta(\xi')\xi\\
&=(-\Id+\eta'\otimes\xi')(X).
\end{align*}
This alone already implies that $\phi'(\xi')=0$. Next, $g'$ is a Riemannian metric on $M$. Indeed, 
\[
g'(X,X)=\frac{1}{\eta(\xi')}g(X-\eta'(X)\xi',X-\eta'(X)\xi')+\eta'(X)^2
\]
which is zero if and only if $X-\eta'(X)\xi'=\eta'(X)=0$, since $g$ is Riemannian. But this holds only if $X=0$. Last, we need to check the compatibility between $g'$ and $\phi'$. We compute
\begin{align*}
g'(\phi'(X),\phi'(Y))&=g'(\phi(X-\eta'(X)\xi'),\phi(Y-\eta'(Y)\xi'))\\
&=\frac{1}{\eta(\xi')}g(\phi(X-\eta'(X)\xi'),\phi(Y-\eta'(Y)\xi'))\\
&=\frac{1}{\eta(\xi')}g(X-\eta'(X)\xi',Y-\eta'(Y)\xi')\\
&=g'(X,Y)-\eta'(X)\eta'(Y).
\end{align*}
Finally, we compute the K\"ahler form of the deformed almost contact metric structure. We have $\eta'\circ \phi'=0$; also, note that $\phi(\xi')=\phi(\theta)$ and that $\imath_\xi\omega=0$. We obtain
\begin{align*}
\omega'(X,Y)&=g'(X,\phi'(Y))=\frac{1}{\eta(\xi')}g(X-\eta'(X)\xi',\phi(Y)-\eta'(Y)\phi(\xi'))\\
&=\frac{1}{\eta(\xi')}(\omega(X,Y)-\eta'(Y)g(X,\phi(\theta))-\eta'(X)g(\xi',\phi(Y))+\eta'(X)\eta'(Y)g(\xi',\phi(\xi')))\\
&=\frac{1}{\eta(\xi')}(\omega(X,Y)+\eta'(Y)(\imath_\theta\omega)(X)-\eta'(X)(\imath_\theta\omega)(Y)+\eta'(X)\eta'(Y)\omega(\xi',\xi'))\\
&=\frac{1}{\eta(\xi')}(\omega+(\imath_\theta\omega)\wedge\eta')(X,Y).
\end{align*}
This f{}inishes the proof.
\end{proof}

\begin{proposition}\label{deformation_2}
Let $(\eta,\xi,\phi,g)$ be a cosymplectic (resp. K-cosymplectic resp. coK\"ahler) structure on a manifold $M$. Assume $\theta$ is a vector f{}ield satisfying 
\begin{enumerate}
\item[i)] $1+\eta(\theta)> 0$;
\item[ii)] $[\xi,\theta]=0$;
\item[iii)] $L_\theta g=0=L_\theta\omega$.
\end{enumerate} 
Then the deformed structure $(\eta',\xi',\phi',g')$ is again 
cosymplectic (resp. K-cosymplectic resp. coK\"ahler).
\end{proposition}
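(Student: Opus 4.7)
The strategy is to first observe that hypotheses (ii) and (iii) together make $\theta$ into an infinitesimal automorphism of the whole almost contact metric structure, and moreover force the function $f := \eta(\xi') = 1+\eta(\theta)$ to be locally constant. Using $\eta = g(\cdot,\xi)$ and the derivation property of the Lie derivative one computes
\[
(L_\theta\eta)(X) = (L_\theta g)(X,\xi) + g(X,[\theta,\xi]),
\]
which vanishes by (iii) and (ii) respectively; since $d\eta=0$, one has $L_\theta\eta = d(\eta(\theta))$ and hence $df=0$. Likewise, from $\omega(X,Y) = g(X,\phi Y)$ one gets $(L_\theta\omega)(X,Y) = (L_\theta g)(X,\phi Y) + g(X,(L_\theta\phi)Y)$, which forces $L_\theta\phi=0$, and $L_\theta\xi = -[\xi,\theta] = 0$ is immediate.

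With $f$ locally constant, the cosymplectic conditions are routine: $d\eta' = f^{-1}d\eta = 0$, while writing $\omega' = f^{-1}\omega + f^{-2}(\imath_\theta\omega)\wedge\eta$ and using Cartan's formula gives
\[
d\omega' = f^{-2}\bigl(d(\imath_\theta\omega)\wedge\eta - \imath_\theta\omega\wedge d\eta\bigr) = f^{-2}(L_\theta\omega)\wedge\eta = 0.
\]
For the K-cosymplectic assertion it suffices to check $L_{\xi'}g' = 0$. Since $\xi$ (by K-cosymplecticity of the original structure) and $\theta$ (by hypothesis (iii)) are both Killing for $g$, one has $L_{\xi'}g=0$; moreover $L_{\xi'}\eta' = f^{-1}(L_\xi\eta + L_\theta\eta) = 0$ and $L_{\xi'}\xi' = 0$. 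The formula in Proposition \ref{deformation_1} builds $g'$ from $g$, $\eta'$, $\xi'$ and the constant $1/f$ by purely tensorial operations, so the derivation property of $L_{\xi'}$ gives $L_{\xi'}g'=0$ at once.

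The coK\"ahler case is the main obstacle. Once cosymplecticity and $d\eta'=0$ are in hand, normality of $(\eta',\xi',\phi',g')$ reduces to $N_{\phi'}=0$. Writing $\phi'(X) = \phi(X) - \eta'(X)\phi(\theta)$ and expanding the Nijenhuis torsion, one obtains $N_\phi$ (which vanishes by the coK\"ahler hypothesis) plus correction terms involving $\beta := \phi(\theta) \in \ker\eta$ together with derivatives $X(\eta'(Y))$ and $Y(\eta'(X))$. I would collapse these corrections using $L_\theta\phi = 0$ (which yields identities such as $[\phi U,\theta] = \phi[U,\theta]$ and, combined with $L_\theta\xi=0$, $\phi(L_\theta\beta) = 0$), the relation $\phi^2 = -\Id + \eta\otimes\xi$, and $d\eta'=0$ (which eliminates exterior-derivative-of-$\eta'$ terms via $X(\eta'(Y)) - Y(\eta'(X)) = \eta'([X,Y])$). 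The delicate step is the bookkeeping of all cross-terms; organizing them symmetrically in $X$ and $Y$ and using that $\beta \in \ker\eta'$ causes the unwanted pieces to cancel pairwise, leaving $N_{\phi'} = N_\phi = 0$.
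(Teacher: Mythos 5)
Your preliminary reductions ($L_\theta\eta=0$, hence $\eta(\theta)$ locally constant, and $L_\theta\phi=0$, $L_\theta\xi=0$) and your treatment of the cosymplectic case coincide with the paper's. For the K-cosymplectic case you take a genuinely different but equally valid route: you verify directly that $\xi'$ is Killing for $g'$, using that $g'$ is built tensorially from the $L_{\xi'}$-invariant data $g$, $\eta'$, $\xi'$ and the constant $1/f$, whereas the paper instead proves $L_{\xi'}\phi'=L_\xi\phi'+L_\theta\phi'=0$ and invokes Corollary \ref{characterization} ($N^{(3)}=0$ characterizes K-cosymplectic among cosymplectic structures). Your version is arguably more self-contained, since it does not route through the characterization of K-cosymplecticity via $N^{(3)}$; the paper's version has the small advantage of never needing to differentiate the metric at all.

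The coK\"ahler case, however, is where your proposal stops short of a proof. You correctly identify the tools ($L_\theta\phi=0$, $\phi^2=-\Id+\eta\otimes\xi$, $d\eta'=0$) but you explicitly defer ``the bookkeeping of all cross-terms'' and assert that the unwanted pieces ``cancel pairwise'' without exhibiting the cancellation. Expanding $N_{\phi'}(X,Y)$ for arbitrary $X,Y$ with $\phi'=\phi-\eta'\otimes\phi(\theta)$ produces on the order of a dozen correction terms involving $[\phi(\theta),Y]$, $Y(\eta'(X))$, $\phi^2(\theta)$, etc., and the claim that they cancel is precisely the content that needs to be checked; as written, the coK\"ahler assertion is not established. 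The idea that makes this painless, and which the paper uses, is that $N_{\phi'}$ is a tensor, so it suffices to evaluate it on pairs $(X,Y)$ and $(X,\xi')$ with $X,Y$ sections of $\ker\eta=\ker\eta'$. On such sections $\phi'$ coincides with $\phi$ and $\phi'\xi'=0$; integrability of $\ker\eta$ keeps all the relevant brackets $[X,Y]$, $[\phi X,Y]$, $[X,\phi Y]$, $[\phi X,\phi Y]$ inside $\ker\eta$, where $(\phi')^2=\phi^2=-\Id$, so
\[
N_{\phi'}(X,Y)=N_\phi(X,Y)=0,
\]
and the remaining case $N_{\phi'}(X,\xi')=-[X,\xi']-\phi[\phi X,\xi']$ vanishes using $L_{\xi'}\phi=0$. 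Incorporating this frame-adapted evaluation would close the gap; alternatively, you would need to actually carry out and display the cancellation you assert.
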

\begin{proof}
Let us f{}irst assume that $(\eta,\xi,\phi,g)$ is cosymplectic and show that the same holds for $(\eta',\xi',\phi',g')$. It is enough to prove that $d\eta'=0=d\omega'$, where $\omega'$ is the K\"ahler
form of $(\eta',\xi',\phi',g')$.
The function $\eta(\theta)$ is constant on a cosymplectic manifold; indeed, 
since $\eta$ is closed, $d(\imath_\theta\eta)=L_\theta\eta$ and
\[
(L_\theta\eta)(X)=\theta(\eta(X))-\eta([\theta,X])=\theta(g(\xi,X))-g(\xi,[\theta,X])\stackrel{(\dag)}{=}g([\theta,\xi],X)\stackrel{(*)}{=}0
\]
where $(\dag)$ holds because $\theta$ is Killing and $(*)$ holds because $\theta$ and $\xi$ commute. Since $\eta(\theta)$ is constant, so is $\eta(\xi')$ and hence $d\eta'=\frac{1}{\eta(\xi')}d\eta=0$. We show next that $\omega'$ 
is closed. By \eqref{deformed_Kahler_form},
\[
d\omega'=\frac{1}{\eta(\xi')}d(\imath_\theta\omega)\wedge\eta'=0
\]
since $d\omega=0$ and $L_\theta\omega=0$ by hypothesis.

Now assume that $(\eta,\xi,\phi,g)$ is K-cosymplectic. In order to show that $(\eta',\xi',\phi',g')$ is K-cosymplectic, we use Corollary \ref{characterization} and prove that $L_{\xi'}\phi'=0$. Notice that since 
$(\eta,\xi,\phi,g)$ is K-cosymplectic, we have
\[
L_\xi\phi'=L_\xi\phi-L_\xi(\eta'\otimes\phi\xi')=-(L_\xi\eta')\otimes\phi\xi'+\eta'\otimes L_\xi(\phi\xi')=\eta'\otimes \phi(L_\xi\xi')=0.
\]
Moreover, $L_\theta \phi=0$ because $L_\theta g=L_\theta\omega=0$. Finally,
\[
L_\theta\phi'=L_\theta\phi-L_\theta(\eta\otimes\phi\xi')=\eta\otimes L_\theta(\phi\xi')=\eta\otimes \phi(L_\theta\xi')=0.
\]
To f{}inish we need to consider the case in which $(\eta,\xi,\phi,g)$ is coK\"ahler and show that $N_{\phi'}=0$, where $N_{\phi'}$ is the Nijenhuis torsion of $\phi'$. Since $N_{\phi'}$ is a tensor,
it is enough to compute it on pairs 
$(X,Y)$ and $(X,\xi')$, with $X,Y$ sections of $\ker \eta$. Take such $X$ and $Y$; then, because $\ker \eta$ is an integrable distribution,
\begin{align*}
N_{\phi'}(X,Y)&=(\phi')^2[X,Y]-\phi'[\phi'X,Y]-\phi'[X,\phi'Y]+[\phi'X,\phi'Y]\\
&=-[X,Y]-\phi[\phi X,Y]-\phi[X,\phi Y]+[\phi X,\phi Y]=N_\phi(X,Y)=0.
\end{align*}
Further, because the Lie bracket of $\xi'$ with a section of $\ker \eta$ is again in $\ker \eta$, we have 
\begin{align*}
N_{\phi'}(X,\xi')&=(\phi')^2[X,\xi']-\phi'[\phi'X,\xi']=-[X,\xi']-\phi[\phi X,\xi']=0
\end{align*}
using $L_{\xi'}\phi=0$ in the last equality.
\end{proof}

Of course, we can interpret this Proposition also in terms of cosymplectic manifolds, without having f{}ixed a Riemannian metric. Indeed, if $(M,\eta,\omega)$ is a cosymplectic manifold with Reeb
f{}ield
$\xi$, and $\theta\in\fX(M)$ is a cosymplectic vector f{}ield which commutes with $\xi$ and is such that $1+\eta(\theta)>0$, then
\[
\eta'=\frac{\eta}{1+\eta(\theta)}\quad \textrm{and} \quad \omega'=\frac{\omega+(\imath_\theta\omega)\wedge\eta'}{1+\eta(\theta)}
\]
def{}ine a new cosymplectic structure on $M$ with Reeb f{}ield $\xi'=\xi+\theta$.

\subsection{Cosymplectic group actions}

Cosymplectic group actions were introduced by Albert in \cite{Albert}. We recall the definition and prove a proposition which is needed in Section \ref{canonical_torus}.

\begin{definition}
Let $(M,\eta,\omega)$ be a cosymplectic manifold and let $G$ be a Lie group acting smoothly on $M$; we will denote the dif{}feomorphism of $M$ given by $g\in G$ by $g\colon M\to M;\, x\mapsto
g\cdot x$. Such an action is \textbf{cosymplectic} if, for every $g\in G$, 
\[
g^*\eta=\eta \quad \mathrm{and} \quad g^*\omega=\omega,
\]
In this case, we say that $G$ acts by cosymplectomorphisms on $M$.
\end{definition}
Let $\fg$ be the Lie algebra of $G$; for every element $A\in\fg$, let $\bar{A}\in\fX(M)$ denote the fundamental vector f{}ield of the action, def{}ined by
\[
\bar{A}(x)=\frac{d}{dt}\Big|_{t=0}\exp(tA)\cdot x.
\]
Then, if $G$ acts on $M$ by cosymplectomorphisms, one has
\[
L_{\bar{A}}\eta=0 \quad \mathrm{and} \quad L_{\bar{A}}\omega=0 \quad \forall A\in\fg.
\]
In particular, every fundamental vector f{}ield is cosymplectic.

\begin{corollary}\label{cor:complementint}
If $G$ acts on $M$ by cosymplectomorphisms, then the 1-form $\eta$ def{}ines a linear form on $\fg$, and $\fg'=\ker \eta$ is an ideal in $\fg$ which is either equal to $\fg$ or of codimension one.
\end{corollary}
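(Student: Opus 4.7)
The plan is to assemble the statement from three elementary pieces, using only results already established in the excerpt.

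First, I would verify that $\eta$ indeed descends to a well-defined linear form on $\fg$. For $A \in \fg$, the fundamental vector field $\bar A$ is cosymplectic (this was observed immediately before the corollary). The text notes right after the definition of $\fX^{\mathrm{cosymp}}(M)$ that for any cosymplectic vector field $X$ the function $\eta(X)$ is constant on $M$. Applied to $\bar A$, this yields a well-defined real number $\tilde\eta(A) := \eta(\bar A)$, and linearity in $A$ is immediate from the linearity of $A \mapsto \bar A$.

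The key input for the ideal property comes from Lemma \ref{Lemma:cosymplectic}: the bracket of two cosymplectic vector fields lies pointwise in $\ker\eta$. Since $\overline{[A,B]}$ differs from $[\bar A,\bar B]$ only by a sign (fixed by the left/right action convention), we get $\tilde\eta([A,B]) = \pm\eta([\bar A,\bar B]) = 0$ for every $A,B \in \fg$. Thus $\tilde\eta$ is a Lie algebra homomorphism $\fg \to \bR$ into the abelian Lie algebra $\bR$, and its kernel $\fg' = \ker\tilde\eta$ is automatically an ideal (equivalently, $[\fg,\fg] \subset \fg'$).

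Finally, the codimension statement is pure linear algebra: the kernel of a linear functional on $\fg$ is either all of $\fg$ (when the functional is identically zero) or a hyperplane of codimension one.

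I do not foresee any real obstacle here; the only point requiring mild care is the sign convention relating $\overline{[A,B]}$ to $[\bar A,\bar B]$, which does not affect the conclusion since we only need $\tilde\eta$ to vanish on the derived subalgebra.
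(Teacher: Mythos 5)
Your proposal is correct and follows essentially the same route as the paper: constancy of $\eta(\bar A)$ from the cosymplectic condition, the ideal property from the fact that brackets of cosymplectic fields lie in $\ker\eta$, and linear algebra for the codimension. The only cosmetic difference is that the paper routes the ideal property through Lemma~\ref{lem:cosymp0ideal} (that $\fX^{\mathrm{cosymp}}_0(M)$ is an ideal in $\fX^{\mathrm{cosymp}}(M)$), whereas you invoke the underlying Lemma~\ref{Lemma:cosymplectic} directly; these are the same argument.
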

\begin{proof}
Because the $G-$action is cosymplectic, we have  $d(\imath_{\bar{A}}\eta)=0$, hence $\eta(\bar{A})$ is a constant function. Thus $\eta$ is well def{}ined on $\fg$, and so is its kernel $\fg'$, which
has codimension $0$ or $1$. By 
Lemma \ref{lem:cosymp0ideal}, this kernel is in fact an ideal in $\fg$.
\end{proof}

Let us assume now that the acting Lie group $G$ is compact; this has the important consequence that each component of the f{}ixed point set is a submanifold of $M$.
With this fact in mind, we can prove the following:
\begin{proposition}\label{prop:fixedpointcosympsubmfd} Suppose $G$ is a compact Lie group acting cosymplectically on $M$. If $\eta(\bar A)=0$ for all $A\in \fg$, then every component of the f{}ixed
point set $M^G$ is a cosymplectic 
submanifold of $M$.
\end{proposition}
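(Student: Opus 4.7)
The plan is as follows. First, I would show that the Reeb field $\xi$ is globally $G$-invariant. Indeed, since $g^{*}\eta=\eta$ and $g^{*}\omega=\omega$ for every $g\in G$, the pushed-forward vector field $g_{*}\xi$ also satisfies $\eta(g_{*}\xi)=1$ and $\imath_{g_{*}\xi}\omega=0$, so by uniqueness of the Reeb field $g_{*}\xi=\xi$. Differentiating gives $[\bar{A},\xi]=0$ for every $A\in\fg$; alternatively, since $\bar{A}\in\fX^{\mathrm{cosymp}}(M)$ by assumption and $\xi$ is central in $\fX^{\mathrm{cosymp}}(M)$ by Corollary~\ref{commutation}, the same conclusion follows. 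Consequently the flow of $\xi$ commutes with the $G$-action, so $\xi$ is tangent to $M^G$; in particular $\xi_p\in T_pM^G$ at every fixed point $p$.

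Next, fix a component $N$ of $M^G$ and a point $p\in N$. Recall that $T_pN=(T_pM)^G$, where $G$ acts linearly via its isotropy representation. The distribution $\cD=\ker\eta$ is $G$-invariant because $\eta$ is, so the splitting $T_pM=\langle\xi_p\rangle\oplus \cD_p$ is $G$-invariant (the first summand is pointwise fixed, as just shown). Taking $G$-invariants gives
\[
T_pN=\langle\xi_p\rangle\oplus \cD_p^{G}.
\]
The symplectic vector space $(\cD_p,\omega_p)$ carries a linear symplectic $G$-action; since $G$ is compact, I would average to obtain a $G$-invariant $\omega_p$-compatible complex structure $J_p$ on $\cD_p$. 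Then $\cD_p^G$ is a $J_p$-invariant subspace, hence a symplectic subspace of $(\cD_p,\omega_p)$; in particular $\omega_p$ restricts to a non-degenerate form on $\cD_p^G$, which must therefore have even dimension $2k$.

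Putting the pieces together, $\dim N=2k+1$ is odd, and the restrictions $\eta|_N$ and $\omega|_N$ are closed because $d$ commutes with pullback. Since $\xi$ is tangent to $N$ and $\eta(\xi)=1$, the form $\eta|_N$ is nowhere vanishing with kernel $\cD_p^G$ at each $p\in N$, and the computation above shows $\omega|_N^{k}$ restricts to a volume form on $\cD_p^G\subset T_pN$. Therefore
\[
(\eta\wedge\omega^{k})|_{N}\neq 0
\]
everywhere on $N$, and $(N,\eta|_N,\omega|_N)$ is a cosymplectic submanifold of $M$.

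The only non-routine ingredient is the symplectic-linear-algebra fact that $(\cD_p^G,\omega_p|_{\cD_p^G})$ is symplectic, which I would handle by the standard averaging argument outlined above; the rest is bookkeeping with the $G$-invariant splitting $T_pM=\langle\xi_p\rangle\oplus\cD_p$ that the hypothesis $\eta(\bar A)=0$ guarantees to behave well under taking $G$-fixed parts.
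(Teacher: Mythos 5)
Your proof is correct and follows the same overall strategy as the paper's: work at a fixed point $p$ of a component $N$ of $M^G$, identify $T_pN$ with the fixed subspace of the isotropy representation, note $\xi_p\in T_pN$, and reduce everything to the nondegeneracy of $\omega_p$ on $T_pN\cap\ker\eta_p$. The one place where you genuinely diverge is the linear-algebra step that delivers this nondegeneracy: the paper decomposes $T_pM$ into $T_pN$ plus the nonzero weight spaces $V_\mu$ of a maximal torus and computes $0=(L_{\bar A}\omega)(v,w)=-\mu(A)\,\omega(v,Jw)$ to conclude that $T_pN$ is $\omega$-orthogonal to every $V_\mu$, whereas you average over the compact group to produce a $G$-invariant $\omega_p$-compatible complex structure $J_p$ on $\cD_p$ and observe that the fixed subspace $\cD_p^G$ is $J_p$-invariant, hence symplectic. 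Both are standard and valid; your version has the mild advantage of working directly with the $G$-fixed part rather than the $T$-fixed part (which for nonabelian $G$ could a priori be larger), and it makes the even-dimensionality of $\ker(\eta|_N)$ explicit, while the paper's computation is the one that transfers verbatim from the K-contact case in \cite{GNT}. One small remark: the hypothesis $\eta(\bar A)=0$ is not really what makes the splitting $T_pM=\langle\xi_p\rangle\oplus\cD_p$ compatible with taking fixed parts --- that needs only the $G$-invariance of $\eta$ and $\xi$, which you established from the action being cosymplectic; indeed $\eta(\bar A)=0$ is automatic once $M^G\neq\emptyset$, since $\eta(\bar A)$ is constant and vanishes where $\bar A$ does.
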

\begin{proof} The proof is analogous to the contact case, see \cite[Lemma 9.15]{GNT}. 
Let $N$ be a component of $M^G$, and $p\in N$. Then the tangent space $T_pN$ is exactly the subspace of $T_pM$ consisting of elements f{}ixed by the isotropy representation of $G$. In particular, this
shows $\xi_p\in T_pN$. 

We want to show that $(N,\eta|_N,\omega|_N)$ is a cosymplectic manifold. In other words, $\omega$ has to be nondegenerate on  $\ker (\eta|_N) = TN\cap \ker \eta$. To see this, we decompose, for $p\in N$, the tangent space
\[
T_pM = T_pN \oplus \bigoplus_\mu V_\mu
\]
into the weight spaces of the isotropy representation. More precisely, we f{}ix a maximal torus $T\subset G$; then, each $V_\mu$ is the complex one-dimensional vector subspace such that for $A\in
{\mathfrak{t}}$ and $v\in V_\mu$ we have $\bar{A}\cdot v = \mu(A)Jv$. Thus, for $v\in T_pN$ and $w\in V_\mu$ we have
\[
0 = (L_{\bar A}\omega)(v,w) = -\omega(\bar{A}\cdot v,w) - \omega(v,\bar{A}\cdot w) = -\mu(A)\omega(v,Jw).
\]
Hence, $T_pN$ is $\omega$-orthogonal to every $V_\mu$. This implies that $\omega$ is nondegenerate on $TN\cap \ker \eta$.
\end{proof}

\subsection{Deformations induced by cosymplectic actions}

Consider a cosymplectic action of a Lie group $G$ on a compact cosymplectic manifold $(M,\eta,\omega)$. The fundamental vector f{}ields of the action are cosymplectic; it follows from Corollary
\ref{commutation} that they commute with $\xi$. Thus we can apply the general deformation described in the previous section to any fundamental vector f{}ield
$\bar A$, where $A\in \fg$, as long as $1+\eta(\bar A)>0$, in order to obtain a new 
cosymplectic structure on $M$ with Reeb f{}ield $\xi+\bar A$.

\begin{example} Consider the standard symplectic structure on $\bC P^n$, together with the induced K-cosymplectic manifold $(M=\bC P^n\times S^1,\eta,\omega)$. On $\bC P^n$ we have a natural
$T^n$-action def{}ined by
\[
(t_1,\ldots,t_n)\cdot [z_1:\ldots:z_{n+1}] = [t_1z_1:\ldots:t_nz_n:z_{n+1}]
\]
and we consider the induced product action of $T^{n+1}=T^n\times S^1$ on $M$. This K-cosymplectic structure is regular, with the Reeb f{}ield being a fundamental f{}ield of the $S^1$-factor. The
action is cosymplectic, so by 
choosing $\bar A$ close to the original Reeb f{}ield (i.e., such that $\eta(\bar A)>0$) and such that the one-parameter subgroup
def{}ined by $A$ is
dense in $T^{n+1}$, we can f{}ind a new K-cosymplectic structure on $\bC P^n\times S^1$ with Reeb f{}ield $\bar A$; hence it has the property that each Reeb orbit is dense in the 
corresponding $T^{n+1}$-orbit.

We observe that the $T^{n+1}$-action has precisely $n+1$ one-dimensional orbits, given by $\{[1:0:\ldots:0]\}\times S^1, \cdots, \{[0:\ldots:0:1]\}\times S^1$. These coincide with the closed Reeb
orbits of the deformed structure. Compare 
also Corollaries \ref{cor:n+1orbits} and \ref{cor:exactlyn+1orbits} below.
\end{example}

We thus see that we can use the type of deformation introduced above in order to obtain examples of irregular cosymplectic manifolds. But we can also use it in the converse direction: Assume that we
are given an irregular K-cosymplectic structure on a compact manifold $M$. 
By \cite{MS}, the isometry group of $(M,g)$ is a compact Lie group. The Reeb f{}ield $\xi$ generates a $1-$parameter subgroup. Set
\[
T=\overline{\{\exp(t\xi) \mid t\in\bR\}}.
\]
Then $T$ is a torus acting on $M$, and the irregularity assumption means that its dimension is strictly larger than $1$; notice that, since $\xi$ preserves the K-cosymplectic structure, and its flow is by construction dense on $T$, by continuity every 
element $t\in T$ acts on $M$ preserving the K-cosymplectic structure. Inf{}initesimally, we thus obtain a Lie algebra homomorphism $\ft\to \fX^{\mathrm{cosymp}}(M)$.

For any element $A\in \ft$ such that $\eta(\bar A)>0$ we f{}ind a K-cosymplectic structure with $\bar A$ as Reeb f{}ield.
Choosing $A$ such that it generates a circle in $T$ we have shown:
\begin{proposition}\label{prop:exquasireg}
On any compact K-cosymplectic manifold there exists a K-cosymplectic structure which is either regular or quasi-regular.\end{proposition}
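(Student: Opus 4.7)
The plan is to apply the type I deformation of Proposition \ref{deformation_2} in order to replace the Reeb field $\xi$ by one that generates an $S^1$-action on $M$; this forces quasi-regularity automatically.

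By Proposition \ref{equivalence:1}, pick an adapted Riemannian metric $g$ for which $\xi$ is Killing. Myers-Steenrod together with Proposition \ref{Yamazaki} produces a torus $T\subset \mathrm{Isom}(M,g)$ containing the closure of the Reeb flow; in particular, every element of $T$ preserves $\eta$, $\omega$, and $g$. For each $A\in\ft$ the fundamental vector field $\bar A$ is simultaneously cosymplectic and Killing, and commutes with $\xi=\bar{A_0}$ (for a specific $A_0\in\ft$) since $T$ is abelian. The linear functional $A\mapsto \eta(\bar A)$ is continuous on $\ft$ and sends $A_0$ to $\eta(\xi)=1$.

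Let $\Lambda\subset\ft$ be the integral lattice, and note that the elements of $\ft$ generating circles in $T$ form the rational span $\Lambda\otimes\bQ$, which is dense in $\ft$. Choose $A$ in this rational span sufficiently close to $A_0$ so that $\eta(\bar A)>0$. Setting $\theta := \bar A - \xi$, the hypotheses of Proposition \ref{deformation_2} are satisfied: $\theta$ is Killing, commutes with $\xi$, and $1+\eta(\theta) = \eta(\bar A)>0$. The deformation then yields a new K-cosymplectic structure on $M$ whose Reeb field is exactly $\xi+\theta=\bar A$.

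Since $\eta'(\bar A)=1$, the new Reeb field $\bar A$ vanishes nowhere, so every orbit of the induced $S^1$-action is a circle. Compactness of $M$ and the slice theorem for $S^1$-actions give only finitely many orbit types, hence a uniform bound $k$ on the orders of the isotropy subgroups; this provides the integer witnessing quasi-regularity. The main subtlety lies in ensuring that the deformation preserves the K-cosymplectic class: this is exactly the content of Proposition \ref{deformation_2}, applicable here because the torus $T$ acts by both cosymplectomorphisms and isometries. The density step in $\ft$ is routine.
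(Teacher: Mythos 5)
Your proof is correct and follows essentially the same route as the paper: take the torus $T$ obtained as the closure of the Reeb flow in the isometry group, pick a circle-generating direction $A\in\ft$ close to $\xi$ with $\eta(\bar A)>0$, and apply the type~I deformation of Proposition~\ref{deformation_2} with $\theta=\bar A-\xi$ to make $\bar A$ the new Reeb field. The extra details you supply (density of the rational span of the integral lattice, and the slice-theorem argument for quasi-regularity of a fixed-point-free circle action) are correct elaborations of steps the paper leaves implicit.
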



\section{Hamiltonian group actions}\label{section:HamiltonianGroupActions}

In this section we recall the notion of Hamiltonian action with the corresponding momentum map, as def{}ined by Albert in \cite{Albert}. We provide some existence and uniqueness statements for the
momentum map in the spirit of Hamiltonian group actions on symplectic manifolds.

\begin{definition}
Let $(M,\eta,\omega)$ be a cosymplectic manifold and let $G$ be a Lie group acting on $M$ by cosymplectomorphisms. The $G-$action is \textbf{Hamiltonian} if there exists a smooth map $\mu\colon M\to\fg^*$ such that
\[
\bar{A}=X_{\mu^A} \quad \forall A\in \fg,
\]
where $\mu^A\colon M\to \bR$ is the function def{}ined by the rule $\mu^A(x)=\mu(x)(A)$ and $X_{\mu^A}$ is the Hamiltonian vector f{}ield of this function. 
Furthermore, we require $\mu$ to be equivariant with respect to the natural coadjoint action of $G$ on $\fg^*$:
\[
\mu(g\cdot x)=g\cdot \mu(x)=\mu(x)\circ {\mathrm{Ad}}_{g^{-1}}.
\]
The map $\mu$ is called \textbf{momentum map} of the Hamiltonian $G-$action.
\end{definition}
By def{}inition of a Hamiltonian vector f{}ield, we have $\eta(X_{\mu^A})=0$. One sees easily that the conditions of the def{}inition imply that each component of the momentum map satisf{}ies
\[
\imath_{\bar A}\omega=d\mu^A.
\]
\begin{remark}
Not every group action on a cosymplectic manifold is Hamiltonian. Indeed, if $M$ is compact (or if the flow of the Reeb f{}ield $\xi$ is complete), then $M$ is endowed with a natural $\bR-$action,
given by the flow of $\xi$. 
The fundamental f{}ield of this action is $\xi$ and since $\eta(\xi)=1$, such an action will never be Hamiltonian. This is dif{}ferent from what happens in the contact case.
\end{remark}

It is possible to give another, equivalent, def{}inition of Hamiltonian action, which uses the so-called comomentum map.
\begin{definition}
Let $(M,\eta,\omega)$ be a cosymplectic manifold and let $G$ be a Lie group acting on $M$ by cosymplectomorphisms. The $G-$action is \textbf{weakly Hamiltonian} if there exists a map $\nu\colon \fg\to C^\infty_\xi(M)$ which makes the 
following diagram commute:
\begin{equation}\label{comoment_map}
\xymatrix{
\fg\ar[r]^\nu\ar[d] & C^\infty_\xi(M)\ar[d]^{\bar{\Theta}}  & \\
\fX^{\mathrm{cosymp}}(M) & \fX^{\mathrm{ham}}(M)\ar[l]}
\end{equation}
The map $\bar{\Theta}$ comes from Corollary \ref{morphism_theta}. The action is \textbf{Hamiltonian} if $\nu$ is an anti-morphism of Lie algebras. Such map $\nu$ is called \textbf{comomentum map} of the Hamiltonian $G-$action.
\end{definition}
\begin{lemma}\label{constant}
Let $(M,\eta,\omega)$ be a connected cosymplectic manifold and let $G$ be a Lie group acting on $M$ by cosymplectomorphisms. Assume that the action is weakly Hamiltonian. Then, for $A,B\in\fg$, the function 
$\nu([A,B])+\{\nu(A),\nu(B)\}$ is constant.
\end{lemma}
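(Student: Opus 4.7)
The plan is to show that the function $f := \nu([A,B]) + \{\nu(A),\nu(B)\}$ has vanishing differential, and then invoke connectedness of $M$.

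First I would verify that $f$ actually lies in $C^\infty_\xi(M)$. This is immediate: the first summand lies there because $\nu$ takes values in $C^\infty_\xi(M)$, and the second lies there because, by Corollary \ref{morphism_theta}, $C^\infty_\xi(M)$ is closed under the Poisson bracket.

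Next I would compute the Hamiltonian vector field associated to $f$. From the diagram \eqref{comoment_map} and the definition of $\bar\Theta$, we have $X_{\nu(C)} = \bar\Theta(\nu(C)) = \bar{C}$ for every $C\in \fg$, so $X_{\nu([A,B])} = \overline{[A,B]}$. For the second summand, property 3 of the Poisson bracket gives $X_{\{\nu(A),\nu(B)\}} = [X_{\nu(A)}, X_{\nu(B)}] = [\bar A, \bar B]$. Now, because $G$ acts on $M$ on the left, the fundamental vector field map $\fg \to \fX(M)$, $C\mapsto \bar C$, is an anti-homomorphism of Lie algebras, i.e., $[\bar A,\bar B] = -\overline{[A,B]}$. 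Adding the two contributions yields
\[
X_f \;=\; X_{\nu([A,B])} + X_{\{\nu(A),\nu(B)\}} \;=\; \overline{[A,B]} + [\bar A,\bar B] \;=\; 0.
\]

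Finally, since $f\in C^\infty_\xi(M)$, we have $\xi(f)=0$, so by definition of the Hamiltonian vector field $X_f$ one has $\imath_{X_f}\omega = df - \xi(f)\eta = df$. Combined with $X_f=0$, this gives $df=0$. Because $M$ is connected, $f$ must be constant, which is precisely the assertion of the lemma.

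The only real step requiring care is the sign in $[\bar A,\bar B] = -\overline{[A,B]}$; this is the standard anti-homomorphism property of fundamental vector fields for a left action, and it is exactly what makes the notion of a Hamiltonian (i.e., \emph{anti}-morphism) comomentum map compatible with the Lie bracket. Everything else reduces to unwinding the diagram \eqref{comoment_map} and the basic property $[X_g,X_h]=X_{\{g,h\}}$ of the Poisson bracket.
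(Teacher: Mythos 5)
Your proof is correct and follows essentially the same route as the paper: both arguments rest on the anti-homomorphism property $[\bar A,\bar B]=-\overline{[A,B]}$ of fundamental vector fields together with $X_{\{g,h\}}=[X_g,X_h]$, concluding that the function in question has zero differential (equivalently, that $\nu([A,B])$ and $-\{\nu(A),\nu(B)\}$ are Hamiltonian functions for the same vector field) and hence is constant by connectedness. Your version is slightly more explicit about linearity of $f\mapsto X_f$ and about why $df=\imath_{X_f}\omega$ for $f\in C^\infty_\xi(M)$, but the substance is identical.
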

\begin{proof}
Since the action is weakly Hamiltonian, given $A\in\fg$, $\nu(A)\in C^\infty_\xi(M)$ is a Hamiltonian function for $\bar{A}$. Now we have $\overline{[A,B]}=-[\bar{A},\bar{B}]$, therefore $-\{\nu(A),\nu(B)\}$ is a Hamiltonian function 
for $\overline{[A,B]}$. Since the same is true for $\nu([A,B])$, their dif{}ference $\nu([A,B])+\{\nu(A),\nu(B)\}$ is constant.

\end{proof}
\begin{proposition}\label{equivalence}
Let $(M,\eta,\omega)$ be a connected cosymplectic manifold and let $G$ be a connected Lie group. Let $\Phi\colon G\times M\to M$ be a cosymplectic action of $G$ on $M$. The two notions of Hamiltonian $G-$action are equivalent.
\end{proposition}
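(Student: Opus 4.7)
The plan is to exhibit a bijective correspondence between momentum maps $\mu\colon M\to\fg^*$ satisfying $\bar A = X_{\mu^A}$ and maps $\nu\colon\fg\to C^\infty_\xi(M)$ making diagram \eqref{comoment_map} commute, via the assignments $\nu(A):=\mu^A$ and $\mu(x)(A):=\nu(A)(x)$, and then to verify that under this correspondence the equivariance of $\mu$ with respect to the coadjoint action is equivalent to $\nu$ being an anti-morphism of Lie algebras. The correspondence itself is essentially tautological: the condition $\bar A=X_{\mu^A}$ forces $\mu^A\in C^\infty_\xi(M)$, since a Hamiltonian function is by definition $\xi$-invariant; conversely, $\mu(x)(A):=\nu(A)(x)$ is linear in $A$ because $\nu$ is, and smoothness in $x$ is inherited from smoothness of $\nu(A)$.

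For the direction ``equivariant $\Rightarrow$ anti-morphism,'' I would differentiate the identity
\[
\mu^A(\exp(tB)\cdot x) = \mu(x)\bigl(\mathrm{Ad}_{\exp(-tB)}A\bigr)
\]
at $t=0$. The left-hand side yields $\bar B(\mu^A)(x) = X_{\mu^B}(\mu^A)(x) = \{\mu^B,\mu^A\}(x)$, while the right-hand side yields $\mu(x)([A,B])=\mu^{[A,B]}(x)$. Hence $\{\nu(B),\nu(A)\}=\nu([A,B])$, which is exactly the anti-morphism condition for $\nu$.

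For the converse, fix $A,B\in\fg$ and $x\in M$, set $A(t):=\mathrm{Ad}_{\exp(-tB)}A$, and consider
\[
G_A(t):=\mu^A(\exp(tB)\cdot x)-\mu^{A(t)}(x),
\]
so that $G_A(0)=0$. Differentiating, the first summand contributes $\{\mu^B,\mu^A\}(\exp(tB)\cdot x)=\mu^{[A,B]}(\exp(tB)\cdot x)$ by the anti-morphism hypothesis, and the second contributes $\mu^{A'(t)}(x)=\mu^{[A(t),B]}(x)$. The crucial identity $[A(t),B]=\mathrm{Ad}_{\exp(-tB)}[A,B]$, valid because $\mathrm{Ad}_{\exp(-tB)}$ is an algebra automorphism fixing $B$, rewrites the second term as $\mu^{[A,B](t)}(x)$, so that $G_A'(t)=G_{[A,B]}(t)$. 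Since $G_\bullet(t)$ is linear in its subscript, the element $\beta(t)\in\fg^*$ defined by $\beta(t)(A):=G_A(t)$ satisfies a linear constant-coefficient ODE of the form $\beta'(t)(A)=\beta(t)([A,B])$ with $\beta(0)=0$, which forces $\beta\equiv 0$. Hence $\mu$ is equivariant along every one-parameter subgroup of $G$; by connectedness, finite products of exponentials exhaust $G$, and equivariance is preserved under such compositions.

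The main obstacle is packaging the final ODE argument: the key algebraic ingredient that keeps the system closed in $\fg^*$ is the identity $[A(t),B]=\mathrm{Ad}_{\exp(-tB)}[A,B]$. Once this is available, the rest reduces to uniqueness of solutions of linear ODEs plus the standard fact that a connected Lie group is generated by any neighborhood of the identity.
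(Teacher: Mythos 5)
Your proposal is correct and follows essentially the same route as the paper: both directions rest on the tautological correspondence $\nu(A)=\mu^A$, on the identity $\bar{B}(\mu^A)=\{\nu(B),\nu(A)\}$, and on differentiating the equivariance condition. The only difference is that where the paper reduces equivariance to the infinitesimal identity $d_x\mu(\bar{A}_x)=\hat{A}_{\mu(x)}$ and then simply invokes connectedness of $G$, you integrate that identity explicitly via the linear ODE $G_A'(t)=G_{[A,B]}(t)$, thereby spelling out a step the paper leaves to the reader.
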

\begin{proof}
Assume we have a momentum map $\mu\colon M\to\fg^*$ which is equivariant and such that, for every $A\in\fg$, $\xi(\mu^A)=0$. Def{}ine $\nu\colon \fg\to C^\infty_\xi(M)$ by $\nu(A)(x)=\mu^A(x)$, where
$x\in M$. Then $\nu$ is well def{}ined. For 
$A,B\in\fg$ and $x\in M$, we compute
\begin{align*}
\nu([A,B])(x)&=\mu^{[A,B]}(x)=\mu(x)([A,B])=\frac{d}{dt}\Big|_{t=0}\mu(x)(\mathrm{Ad}_{\exp(tA)}B)\\
&=\frac{d}{dt}\Big|_{t=0}\mathrm{Ad}^*_{\exp(tA)}(\mu(x))(B)=\frac{d}{dt}\Big|_{t=0}(\mu(\Phi_{\exp(-tA)}(x)))(B)\\
&=-d\mu^B(X_{\mu^A})(x)=-X_{\mu^A}(\mu^B)(x)=-\{\mu^A,\mu^B\}(x)\\
&=-\{\nu(A),\nu(B)\}(x).
\end{align*}
For the converse, suppose that we have a comomentum map $\nu\colon \fg\to C^\infty_\xi(M)$ which is an anti-morphism of Lie algebras. Def{}ine the map $\mu\colon M\to\fg^*$ in the following way:
$\mu(x)(A)=\nu(A)(x)$, for any $A\in\fg$. Also, for a f{}ixed $A\in\fg$, set $\mu^A(x)=\mu(x)(A)$. We prove f{}irst that $\xi(\mu^A)=0$ for every $A\in\fg$. Indeed, 
\[
\xi(\mu^A)=d\mu^A(\xi)=d(\nu(A))(\xi)=\xi(\nu(A))=0,
\]
since $\nu(A)\in C^\infty_\xi(M)$ for any $A\in\fg$. By diagram \eqref{comoment_map}, the vector f{}ield  $X_{\nu(A)}$ is Hamiltonian. Now $\nu(A)=\mu^A$, hence $\bar{A}=X_{\nu(A)}=X_{\mu^A}$.
Finally, we prove the equivariance of 
$\mu$. Since $G$ is connected, it is enough to prove this inf{}initesimally. We must therefore show that the map $\mu\colon M\to\fg^*$ we just def{}ined satisf{}ies, for every $x\in M$,
\[
d_x\mu(\bar{A}_x)=\hat{A}_{\mu(x)},
\]
where $\hat{A}$ denotes the inf{}initesimal vector f{}ield of the coadjoint action of $G$ on $\fg^*$, def{}ined by $\hat{A}_\alpha(B)=\alpha([B,A])$ for $\alpha\in\fg^*$ and $A,B\in\fg$.
We compute
\begin{align*}
\hat{A}_{\mu(x)}(B)&=\mu(x)([B,A])=\nu([B,A])(x)=\{\nu(A),\nu(B)\}(x)=X_{\nu(A)}(\nu(B))(x)\\
&=\bar{A}_x(\nu(B))=d_x\nu(B)(\bar{A}_x)=d_x\mu^B(\bar{A}_x)\\
&=d_x\mu(\bar{A}_x)(B).
\end{align*}
\end{proof}

\subsection{Existence and uniqueness of the momentum map}

We are interested in suf{}f{}icient conditions under which a $G-$action on a cosymplectic manifold $M$ is Hamiltonian. We can prove the existence of the momentum map in some simple cases which are of
our interest.
\begin{theorem}\label{thm:b1=1thenhamiltonian}
Let $(M,\eta,\omega)$ be a compact cosymplectic manifold with $b_1(M)=1$ and let $T$ be a torus which acts on $M$. Suppose that the $T-$action is cosymplectic and that $\eta(\bar{A})=0$ for every $A\in\ft$. Then the $T-$action is 
Hamiltonian.
\end{theorem}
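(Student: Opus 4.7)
The plan is to exhibit a comomentum map $\nu\colon\ft\to C^\infty_\xi(M)$ which is an anti-morphism of Lie algebras, and then invoke Proposition \ref{equivalence}. First I would observe that for each $A\in\ft$, the fundamental vector field $\bar A$ is cosymplectic by hypothesis and satisfies $\eta(\bar A)=0$, so $\bar A\in \fX^{\mathrm{cosymp}}_0(M)$. Since $b_1(M)=1$, Proposition \ref{b_1=1} immediately yields that $\bar A$ is Hamiltonian: there exists $\mu^A\in C^\infty_\xi(M)$ with $\imath_{\bar A}\omega=d\mu^A$, and $\mu^A$ is unique up to an additive constant because $M$ is connected.

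Next I would linearize this choice. Fix a basis $A_1,\ldots,A_k$ of $\ft$, pick Hamiltonian functions $\mu^{A_i}\in C^\infty_\xi(M)$ for each $\bar A_i$, and extend by linearity to $\nu\colon \ft\to C^\infty_\xi(M)$, $\nu(\sum c_iA_i)=\sum c_i\mu^{A_i}$. By construction $X_{\nu(A)}=\bar A$, so $\nu$ fits into the diagram \eqref{comoment_map}. It then remains to verify the anti-morphism property $\nu([A,B])=-\{\nu(A),\nu(B)\}$.

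Here the key point is that $T$ is abelian, so $[A,B]=0$ and $\nu([A,B])=0$; we only need $\{\nu(A),\nu(B)\}=0$ for all $A,B\in\ft$. By Lemma \ref{constant}, this Poisson bracket is a constant $c_{A,B}\in\bR$. To show $c_{A,B}=0$ I would argue as in Proposition \ref{b_1=1}: since $\bar A(\nu(B))=X_{\nu(A)}(\nu(B))=\{\nu(A),\nu(B)\}=c_{A,B}$, along a flow line $\gamma(t)=\exp(tA)\cdot x$ of $\bar A$ we get $\nu(B)(\gamma(t))=\nu(B)(x)+c_{A,B}\,t$; boundedness of $\nu(B)$ on the compact manifold $M$ then forces $c_{A,B}=0$. (Alternatively one could integrate the identity $L_{\bar A}(\nu(B)\,\eta\wedge\omega^n)=c_{A,B}\,\eta\wedge\omega^n$ and use Cartan's formula plus Stokes' theorem to reach the same conclusion.)

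Having established that $\nu$ is an anti-morphism, Proposition \ref{equivalence} supplies the equivariant momentum map $\mu\colon M\to\ft^*$ defined by $\mu(x)(A)=\nu(A)(x)$, completing the proof. The only real obstacle is the vanishing of the constants $c_{A,B}$; all other steps are routine once one knows that each $\bar A$ is Hamiltonian. The argument uses three ingredients crucially: $b_1(M)=1$ (for Hamiltonicity of individual $\bar A$), the abelianness of $T$ (so that the required identity collapses to $\{\nu(A),\nu(B)\}=0$), and the compactness of $M$ (to rule out non-zero constants).
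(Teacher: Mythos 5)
Your proposal is correct and follows essentially the same route as the paper: apply Proposition \ref{b_1=1} to each fundamental field, assemble $\nu$ linearly from a basis of $\ft$, invoke Lemma \ref{constant} and the abelianness of $\ft$ to reduce everything to showing the constant $\{\nu(A),\nu(B)\}$ vanishes, and kill that constant by compactness. The only cosmetic difference is in the last step, where the paper notes that the constant function $\{\nu(A),\nu(B)\}$ must vanish at a critical point of one of the Hamiltonians, while you observe that a nonzero constant would force $\nu(B)$ to grow linearly along a flow line of $\bar A$; both are the same compactness argument in different clothing.
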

\begin{proof}
Set $\ft=\mathrm{Lie}(T)$, take $A\in\ft$ and let $\bar{A}\in\fX^{\mathrm{cosymp}}_0(M)$ be the fundamental vector f{}ield of the action. By Proposition \ref{b_1=1}, $\bar{A}$ is Hamiltonian, hence,
$\imath_{\bar{A}}\omega=df^A$ for some 
function $f^A\in C^\infty(M)$; in fact, $f^A\in C^\infty_\xi(M)$. Next, we collect all these functions in a map $\nu\colon \ft\to C^\infty_\xi(M)$ as follows.
Choose a basis $\{A_1,\ldots,A_n\}$ of $\ft$, def{}ine $\nu(A_i)=f^{A_i}$ for $1\leq i\leq n$ and extend $\nu$ to $\ft$ by linearity. 
We proceed to show that $\nu$ is an anti-morphism of Lie algebras. The function $\{\nu(A),\nu(B)\}+\nu([A,B])$ is constant on $M$ (see Lemma \ref{constant}). Since the Lie algebra $\ft$ is abelian, this implies that 
$\{\nu(A),\nu(B)\}$ is constant. But $M$ is a compact manifold, so $\nu(A)$ must have at least one critical point. Hence $\{\nu(A),\nu(B)\}$ vanishes at some point, so it is identically zero. Then $\nu([A,B])=0=-\{\nu(A),\nu(B)\}$, 
i.e., $\nu$ is an anti-morphism of Lie algebras and the $T-$action is Hamiltonian.
\end{proof}

The above theorem gives conditions on the cosymplectic manifold $M$ for a cosymplectic torus action to be Hamiltonian. Next, we give conditions on a compact Lie group $G$ that ensure that a cosymplectic $G-$action is Hamiltonian. 
The kind of conditions we give mirror what happens in the symplectic case (see \cite{Audin,G-S}). 
\begin{theorem}
Let $(M,\eta,\omega)$ be a compact cosymplectic manifold and let $G$ be a connected, compact semisimple Lie group acting on $M$ by cosymplectomorphisms. Assume that $\eta(\bar{A})=0$ for every $A\in\fg$. Then the action is Hamiltonian 
and the momentum map $\mu\colon M\to\fg^*$ is unique.
\end{theorem}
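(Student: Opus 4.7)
The plan is to mirror the classical argument for Hamiltonian actions of compact semisimple Lie groups on symplectic manifolds, exploiting the short exact sequence
\[
0\longrightarrow\bR\longrightarrow C^\infty_\xi(M)\xrightarrow{\bar\Theta}\fX^{\mathrm{ham}}(M)\longrightarrow 0
\]
from Section~\ref{section:HamiltonianVF} together with Whitehead's lemmas for $H^1(\fg;\bR)$ and $H^2(\fg;\bR)$.

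First I would show that each fundamental vector field $\bar A$ is Hamiltonian, not merely cosymplectic. Since $\fg$ is semisimple, $\fg=[\fg,\fg]$, so every $A\in\fg$ decomposes as $A=\sum_i[A_i,B_i]$. As $A\mapsto\bar A$ is an anti-homomorphism of Lie algebras for left actions, we get $\bar A=-\sum_i[\bar{A_i},\bar{B_i}]$; by Proposition~\ref{ideal:1} each bracket $[\bar{A_i},\bar{B_i}]$ is Hamiltonian with Hamiltonian function $-\omega(\bar{A_i},\bar{B_i})$, and hence so is $\bar A$. We may therefore choose any linear lift $\tilde\nu\colon\fg\to C^\infty_\xi(M)$ with $X_{\tilde\nu(A)}=\bar A$ for every $A\in\fg$.

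Next I would measure the failure of $\tilde\nu$ to be an anti-Lie algebra morphism by the bilinear cochain
\[
c(A,B)=\tilde\nu([A,B])+\{\tilde\nu(A),\tilde\nu(B)\}.
\]
By Lemma~\ref{constant}, $c(A,B)$ is a constant function on $M$, so $c\in\Lambda^2\fg^*$. A direct computation, combining the Jacobi identity in $\fg$ with the Jacobi identity for the Poisson bracket on $C^\infty_\xi(M)$, shows that $c$ is a $2$-cocycle in the Chevalley--Eilenberg cohomology of $\fg$ with trivial real coefficients. By Whitehead's second lemma, $H^2(\fg;\bR)=0$, so $c(A,B)=\lambda([A,B])$ for some $\lambda\in\fg^*$. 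Replacing $\tilde\nu$ by $\nu(A)=\tilde\nu(A)-\lambda(A)$ preserves $X_{\nu(A)}=\bar A$ while enforcing $\nu([A,B])=-\{\nu(A),\nu(B)\}$, so $\nu$ is an anti-morphism of Lie algebras; by Proposition~\ref{equivalence}, the dual map $\mu\colon M\to\fg^*$ is then an equivariant momentum map, and the action is Hamiltonian.

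For uniqueness, given two momentum maps $\mu,\mu'$, the difference $\mu^A-(\mu')^A$ satisfies $d(\mu^A-(\mu')^A)=\imath_{\bar A}\omega-\imath_{\bar A}\omega=0$, so (by connectedness of $M$) it is a constant $\rho(A)\in\bR$, assembling into some $\rho\in\fg^*$. Imposing the anti-morphism condition on both comomentum maps yields $\rho([A,B])=0$ for all $A,B\in\fg$, so $\rho$ vanishes on $[\fg,\fg]=\fg$ (equivalently, $\rho\in H^1(\fg;\bR)=0$ by Whitehead's first lemma). The main technical hurdle is the verification that $c$ is a $2$-cocycle and the invocation of Whitehead's second lemma in this concrete setting; once this is in place, existence and uniqueness follow as formal consequences of Propositions~\ref{ideal:1} and~\ref{equivalence} together with the central extension displayed above.
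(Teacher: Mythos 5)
Your proposal is correct and follows essentially the same route as the paper, which only sketches the argument by appeal to the classical symplectic case: the obstruction to producing a comomentum map lies in $H^2(\fg;\bR)$ and the ambiguity in $H^1(\fg;\bR)$, both of which vanish by Whitehead's lemmas for semisimple $\fg$. You have simply filled in the details the paper leaves implicit (using $\fg=[\fg,\fg]$ together with Proposition~\ref{ideal:1} to see that each $\bar A$ is Hamiltonian, the cocycle $c$ and its correction, and the uniqueness argument), all consistently with the paper's intended proof.
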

\begin{proof}
The proof is formally equal to the symplectic case, where every symplectic action of a compact semisimple Lie group is Hamiltonian. One shows that the obstruction to lifting the map $\fg\to\fX_0^{\mathrm{cosymp}}(M)$ to a map 
$\fg\to\fX^{\mathrm{ham}}(M)$ lies in $H^2(\fg;\bR)$, which vanishes if $G$ is semisimple. We obtain therefore a comomentum map $\nu\colon \fg\to C^\infty_\xi(M)$, and the action is Hamiltonian, since we assume $G$ connected. The lack 
of uniqueness is measured by $H^1(\fg;\bR)$, which again vanishes since $G$ is semisimple.  
\end{proof}


\section{Closed Reeb orbits and basic cohomology}\label{section:ClosedReebOrbits}

In this section we derive a relation between the topology of the union $C$ of closed Reeb orbits on a compact $K$-cosymplectic manifold $M$ and the (basic) cohomology of $M$, similar to the
$K$-contact case treated in \cite{GNT}. As in \cite{GNT}, the proof follows from considering the equivariant cohomology of the torus action defined by the closure of the Reeb field, using the
statement that $C$ arises as the critical locus of a generic component of the momentum map, which was proven in the $K$-contact case by Rukimbira \cite{Ruk}. The analogous statement for fixed points
of Hamiltonian group actions on symplectic manifolds is well-known, see e.g.\ \cite[Proposition III.2.2]{Audin}.

\subsection{The canonical torus action on an irregular K-cosymplectic manifold}\label{canonical_torus}

Consider a Hamiltonian action of a torus $T$ on a cosymplectic manifold $(M,\eta,\omega)$, with momentum map $\mu\colon M\to \fg^*$. 
\begin{proposition}\label{prop:critmuA}
For any $A\in \ft$ we have ${\mathrm{crit}}(\mu^A) = \{p\in M\mid \bar{A}_p=0\}$.
\end{proposition}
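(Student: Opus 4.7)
The strategy is straightforward: unpack the defining equation of the momentum map and use nondegeneracy of $\omega$ on the contact distribution $\ker\eta$.

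First, I would record the basic identities. Since the action is Hamiltonian, $\bar A = X_{\mu^A}$ is the Hamiltonian vector field associated to $\mu^A \in C^\infty_\xi(M)$, and the defining equations of a Hamiltonian vector field give
\[
\eta(\bar A) \equiv 0 \qquad \text{and} \qquad \iota_{\bar A}\omega = d\mu^A - \xi(\mu^A)\eta = d\mu^A,
\]
using $\xi(\mu^A)=0$. Thus a point $p \in M$ is a critical point of $\mu^A$ if and only if $\iota_{\bar A_p}\omega_p = 0$.

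The easy inclusion is then immediate: if $\bar A_p = 0$, then certainly $\iota_{\bar A_p}\omega_p = 0$, so $p \in \mathrm{crit}(\mu^A)$.

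For the reverse inclusion, suppose $\iota_{\bar A_p}\omega_p = 0$. Since $\eta(\bar A) \equiv 0$, we have $\bar A_p \in \ker\eta_p$. The cosymplectic condition $\eta \wedge \omega^n \neq 0$ ensures that $\omega_p$ restricted to the $2n$-dimensional subspace $\ker\eta_p$ is a nondegenerate skew form (this is exactly the content used to identify $\ker\eta$ with a symplectic distribution, cf.\ the discussion after Proposition~\ref{equivalence:1}). The vanishing $\omega_p(\bar A_p, w) = 0$ for all $w \in T_pM$ forces $\omega_p(\bar A_p, w) = 0$ for all $w \in \ker\eta_p$ in particular, and since $\bar A_p$ itself lies in $\ker\eta_p$, nondegeneracy of $\omega_p|_{\ker\eta_p}$ yields $\bar A_p = 0$.

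There is no real obstacle here; the only subtle point is to remember that $\bar A$ being a fundamental vector field of a \emph{Hamiltonian} action automatically lies in $\ker\eta$, so that the nondegeneracy of $\omega$ on the horizontal distribution applies. Once this is noted, the proof is a two-line calculation. Alternatively, one could phrase the argument via Proposition~\ref{isomorphism} applied pointwise: the pair $(\iota_{\bar A_p}\omega_p, \eta_p(\bar A_p)) = (0,0)$ corresponds under $\Psi$ to the zero vector, so $\bar A_p = 0$.
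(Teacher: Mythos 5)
Your proof is correct and follows essentially the same route as the paper: both reduce to the identity $\imath_{\bar A}\omega=d\mu^A$ together with $\eta(\bar A)=0$, and then use the pointwise nondegeneracy structure of the cosymplectic pair (the paper phrases it as $\ker\omega_p=\langle\xi_p\rangle$ plus $\eta(\bar A)=0$, you phrase it as nondegeneracy of $\omega$ on $\ker\eta$ — two equivalent readings of the same decomposition). No issues.
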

\begin{proof}
Because $\bar{A}=X_{\mu^A}$, we have
\[
\imath_{\bar{A}}\omega = \imath_{X_{\mu^A}}\omega=d\mu^A,
\]
and hence the critical points of $\mu^A$ are precisely those points at which $\bar{A}$ is a multiple of the Reeb f{}ield $\xi$. But by def{}inition of a momentum map, we have
$\eta(\bar{A})=\eta(X_{\mu^A})=0$, hence at such a point $\bar{A}$ 
necessarily vanishes.
\end{proof}

Assume now that $M$ is a compact K-cosymplectic manifold. Let $T$ denote the torus given by the closure of the flow of the Reeb f{}ield; more precisely, we choose a metric with respect to which the
Reeb f{}ield is Killing, and def{}ine $T$ as the closure of the 1-parameter subgroup of the corresponding isometry 
group def{}ined by $\xi$. We additionally assume that our K-cosymplectic manifold is irregular, which in terms of $T$ just means that $\dim T\geq 2$. By Corollary \ref{cor:complementint}, the element
$\xi\in \ft$ has a canonical 
complement in $\ft$ given by $\fs=\ker \eta$. Let $S$ denote the connected Lie subgroup of $T$ with Lie algebra $\fs$. 

Because $T$ is def{}ined as the closure of the 1-parameter subgroup def{}ined by $\xi$, and $\xi$ is cosymplectic, the torus $T$ acts on $M$ by cosymplectomorphisms. Hence, $S$ also acts by
cosymplectomorphisms, and additionally satisf{}ies 
$\eta(\bar{A})=0$ for all $A\in \fs$. We now assume additionally that the $S$-action on $M$ is Hamiltonian (which by Theorem \ref{thm:b1=1thenhamiltonian} is automatic if $b_1=1$) and denote a momentum map by $\mu\colon M\to \fs^*$. 
We will argue below that, under this assumption, $S$ is closed in $T$. Notice that in general the corresponding statement for cosymplectic non-Hamiltonian actions is false:

\begin{example}\label{irregular}
We consider $(T^3,\eta,\omega)$ as a K-cosymplectic manifold, as we did in Example \ref{ex:typeIIdeformation}; let $\xi$ denote the Reeb f{}ield. Pick a vector $\xi'\in\ft$ with the property that the
closure of its flow is dense in $T^3\subset\mathrm{Isom}(T^3)$; for instance, one whose coef{}f{}icients are pairwise algebraically independent over $\bQ$ does the job. Using $\xi'$ we can perfom a
deformation of type I, as explained in Section \ref{section:deformations}, and obtain another K-cosymplectic manifold $(T^3,\eta',\omega')$. We use next a
deformation of type II to produce a third K-cosymplectic manifold $(T^3,\eta'',\omega'')$ with Reeb f{}ield $\xi''=\xi'$ such that the foliation $\ker\eta''$ is dense in $T^3$; more precisely, if $S$
is the unique connected Lie group with Lie algebra $\ker\eta''$, then $S$ is dense in $T$; in particular, $S$ is not a closed subtorus. The action of $T^3$ on itself is cosymplectic but not
Hamiltonian.
\end{example}

We denote by $C\subset M$ the union of all closed orbits of $\xi$, i.e., all flow lines which are homeomorphic to a circle. Note that by def{}inition, $C$ is equal to the union of all one-dimensional
$T$-orbits, and then also equal to the f{}ixed point set of the $S$-action. We then obtain the following statement about the functions $\mu^A\colon M\to \bR$, where $A\in \fs$:
\begin{proposition} For generic $A\in \fs$, i.e., such that the 1-parameter subgroup def{}ined by $A$ is dense in $S$, the critical set of $\mu^A$ is precisely $C$.
\end{proposition}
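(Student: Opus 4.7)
The strategy is to chain three identifications of $\mathrm{crit}(\mu^A)$. By Proposition~\ref{prop:critmuA} applied to the Hamiltonian $S$-action, for $A \in \fs$ one has $\mathrm{crit}(\mu^A) = \{p : \bar A_p = 0\}$; I then claim that under the genericity assumption this equals $M^S$, and that $M^S = C$ in general.

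For the first of these two claims, observe that $\{\bar A = 0\}$ is exactly the fixed point set of the 1-parameter subgroup $\{\exp(tA)\} \subset S$, hence equal to the fixed point set of its closure in $S$ (since the action is continuous and $M$ is Hausdorff). By the genericity hypothesis this closure is all of $S$, so the zero set is $M^S$.

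For $M^S = C$, I first rewrite $C = \{p : \dim T\cdot p = 1\}$: the closure of the Reeb orbit $O_p$ equals $T\cdot p$ because $\exp(\bR\xi)$ is dense in $T$; and when $\dim T \cdot p = 1$, commutativity of $T$ together with $\xi \notin \ft_p$ forces $\exp(\bR\xi)\cdot T_p = T$ (a nontrivial 1-parameter subgroup of the circle $T/T_p$ is surjective), so $O_p = T\cdot p$ is already closed. Given this, if $p \in M^S$ then $\fs \subset \ft_p$, while $\xi_p \neq 0$ gives $\xi \notin \ft_p$; combined with $\ft = \bR\xi \oplus \fs$ this forces $\ft_p = \fs$, i.e.\ $\dim T\cdot p = 1$. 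Conversely, if $p \in C$ then $\ft_p$ has codimension one in $\ft$, and for any $A \in \ft_p$ the function $\eta(\bar A)$ is constant on $M$, because $\bar A$ is cosymplectic and
\[
d(\imath_{\bar A}\eta) = L_{\bar A}\eta - \imath_{\bar A}d\eta = 0,
\]
and vanishes at $p$, hence identically. Therefore $\ft_p \subset \ker\eta = \fs$, so $\ft_p = \fs$ by dimension; invoking that $S$ is closed in $T$ (as asserted above), this yields $S \subset T_p$, i.e., $p \in M^S$.

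The only subtle step is the direction $C \subset M^S$, where one has to recognize that the codimension-one isotropy subalgebra $\ft_p$ coincides with the specific complement $\fs$ of $\bR\xi$ rather than some other complement; this is not automatic and is forced by the cosymplectic character of the action through the constancy of $\eta(\bar A)$ on fundamental vector fields.
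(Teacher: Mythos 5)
Your proof is correct and follows the same route as the paper's: Proposition~\ref{prop:critmuA} identifies $\mathrm{crit}(\mu^A)$ with the zero set of $\bar A$, which by density of the 1-parameter subgroup $\{\exp(tA)\}$ in $S$ equals the fixed point set $M^S$, which in turn equals $C$. The only difference is that you also supply a careful proof of the identification $M^S=C$, which the paper merely asserts in the paragraph preceding the proposition; note that in your direction $C\subset M^S$ you do not actually need $S$ to be closed in $T$ (a fact the paper only establishes afterwards, using the non-emptiness of $C$ furnished by this very proposition), since $\ft_p=\fs$ together with the closedness of the isotropy group $T_p$ already forces the connected subgroup $S$ with Lie algebra $\fs$ to lie in $T_p$.
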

\begin{proof} By Proposition \ref{prop:critmuA} the critical set of $\mu^A$ is the set of points where $\bar{A}$ vanishes. But $\bar{A}$ vanishes at a point $p$ if and only if $p$ is a f{}ixed point
of the 1-parameter subgroup def{}ined 
by $A$, and these f{}ixed points coincide by assumption on $A$ with the f{}ixed points of $S$.
\end{proof}
Note that by compactness of $M$, the functions $\mu^A$ always have critical points. Thus, the proposition directly implies the existence of closed Reeb orbits on compact K-cosymplectic manifolds. Note that below, in Corollary 
\ref{cor:n+1orbits}, we will prove a more precise existence statement for closed Reeb orbits.

The fact that $C\neq \emptyset$ also implies that $S$ is closed in $T$, i.e., a subtorus: if the codimension one subgroup $S$ was not closed in $T$, then its closure would be equal to $T$. This would
imply that the $S$-f{}ixed point set 
equals the $T$-f{}ixed point set, but this contradicts the fact that the Reeb f{}ield has no zeros.

\begin{proposition} \label{prop:mumorsebott} For generic $A\in \fs$ the function $\mu^A$ is a Morse-Bott function.
\end{proposition}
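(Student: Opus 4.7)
The plan is to follow the standard pattern for showing Morse-Bott properties of momentum-map components in the Hamiltonian setting, adapting it to our cosymplectic situation by reducing the computation to the normal bundle of $C$ inside $M$ and diagonalizing via the isotropy representation of $S$.

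First I would upgrade the data to a fully $T$-equivariant K-cosymplectic structure. Starting from an adapted metric on which $\xi$ is Killing, average it over the compact torus $T$ (which already preserves $\eta$ and $\omega$); by the construction in the proof of Proposition \ref{equivalence:1}, the resulting $T$-invariant adapted metric $g$ produces a $T$-invariant tensor $\phi$, so the full structure $(\eta,\xi,\phi,g)$ is $T$-invariant. In particular, $S$ acts by isometries, so its fixed set $C=M^S$ is a disjoint union of closed submanifolds $N_1,\dots,N_r$ (finitely many by compactness of $M$). By the preceding proposition, for generic $A\in \fs$ the critical set of $\mu^A$ is precisely $C$, so we only need to control the Hessian on the normal bundle of $C$.

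Next I would compute the Hessian of $\mu^A$ at a point $p\in N_j$. Because $\bar A_p=0$ and $d\mu^A=\imath_{\bar A}\omega$, for vector fields $X,Y$ extending tangent vectors at $p$ one has
\[
\mathrm{Hess}(\mu^A)_p(X,Y)\,=\,X_p\bigl(\omega(\bar A,Y)\bigr)\,=\,\omega_p\bigl((L_X\bar A)_p,Y_p\bigr)\,=\,-\omega_p\bigl(\rho_A(X),Y\bigr),
\]
where $\rho_A\colon T_pM\to T_pM$ is the derivative of the $A$-action at $p$, i.e.\ the infinitesimal isotropy representation. Since the $S$-action preserves the whole K-cosymplectic structure, $\rho_A$ commutes with $\phi$ and is skew with respect to $g$, so $T_pM$ decomposes $g$-orthogonally as $T_pN_j\oplus V$ where $V$ is the $g$-orthogonal complement of $T_pN_j$; further, $\xi_p\in T_pN_j$ (hence $V\subset \ker\eta_p$) and both $T_pN_j$ and $V$ are $\phi$-invariant. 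Decomposing $V=\bigoplus_\mu V_\mu$ into weight spaces of $\fs$, each $V_\mu$ is a $\phi$-invariant real $2$-plane on which $\rho_A$ acts as $\mu(A)\cdot J_\mu$ with $J_\mu=\pm\phi|_{V_\mu}$ (the only $g$-orthogonal, $\phi$-commuting complex structures on $V_\mu$).

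On each $V_\mu$ the formula for the Hessian then becomes
\[
\mathrm{Hess}(\mu^A)_p(X,Y)\,=\,\mp\,\mu(A)\,\omega_p(\phi X,Y)\,=\,\pm\,\mu(A)\,g_p(X,Y),
\]
using $\omega(X,\phi Y)=-g(X,Y)$ on $\ker\eta$. So the Hessian is non-degenerate on $V$ exactly when $\mu(A)\neq 0$ for every weight $\mu$ that occurs in some isotropy representation $\rho_S$ at some point of $C$. Since the weights are locally constant on each component $N_j$, only finitely many weights occur; the ``bad'' set is therefore a finite union of hyperplanes in $\fs$. Intersecting its complement with the residual set of $A\in\fs$ whose one-parameter subgroup is dense in $S$ (as required by the previous proposition and the previous section for $\mathrm{crit}(\mu^A)=C$) still leaves a generic subset of $\fs$, and any such $A$ yields a Morse-Bott function $\mu^A$ with critical manifold $C$.

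The main obstacle is the computation in the second paragraph: one has to identify the Hessian of $\mu^A$ at an $S$-fixed point with the bilinear form built from $\omega$ and the isotropy representation, and then verify that the available symmetries ($\phi$-invariance, $g$-orthogonality of weight spaces) force $J_\mu=\pm\phi|_{V_\mu}$ so that the final expression reduces to a non-zero multiple of $g|_{V_\mu}$. This is exactly the cosymplectic analogue of \cite[Lemma 9.15]{GNT} (already used in Proposition \ref{prop:fixedpointcosympsubmfd}), and once it is in place the genericity argument is routine.
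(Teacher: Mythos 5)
Your argument is correct, but it takes a genuinely different route from the paper's. You compute the Hessian at $p\in C$ as $\mathrm{Hess}(\mu^A)_p=\pm\,\omega_p(\rho_A(\cdot),\cdot)$ via the linearized isotropy action, split the normal space into $S$-weight spaces $V_\mu$, identify $J_\mu=\pm\phi|_{V_\mu}$, and read off that the Hessian is $\pm\mu(A)\,g$ on each $V_\mu$; genericity then means avoiding the finitely many weight hyperplanes $\ker\mu$ (together with the density condition needed so that $\mathrm{crit}(\mu^A)=C$, which you correctly keep). The paper instead works Riemannian-geometrically: it writes $\mathrm{Hess}_{\mu^A}(p)(v,w)=2\,\omega(\nabla_v\bar A,w)$ using the Levi-Civita connection of an invariant metric, shows $\nabla_v\bar A$ is nonzero for $v$ normal to $C$ by a Jacobi-field argument along the geodesic in direction $v$, shows it is perpendicular to $C$ using the Killing equation, and then pairs $v$ against $\phi\nabla_v\bar A$, invoking Proposition \ref{prop:fixedpointcosympsubmfd} to know this vector is again normal to $C$. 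Your approach buys more: it gives the index of each critical submanifold (the number of weights with $\mu(A)<0$, each contributing $2$) and isolates exactly which $A$ work, at the small cost of the averaging step to make $\phi$ fully $T$-invariant and of checking that the weight $2$-planes can be chosen $\phi$-invariant when weights occur with multiplicity (which your orthogonality/equivariance remarks do justify). The paper's route avoids any enumeration of weights and leans on the already-established fact that $C$ is a cosymplectic submanifold. Only trivial caveats remain on your side: the sign in $\mathrm{Hess}=-\omega(\rho_A(X),Y)$ depends on your convention for $\rho_A$, and strictly speaking nondegeneracy already follows from invertibility of $\rho_A|_V$ plus nondegeneracy of $\omega|_V$, so the identification $J_\mu=\pm\phi|_{V_\mu}$ is a bonus rather than a necessity.
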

\begin{proof}
The critical set of $\mu^A$ was shown to be the f{}ixed point set of the $S$-action; hence each of its components is a submanifold of $M$. We have to show that the Hessian of $\mu^A$ is non-degenerate
in the normal directions. The 
computation is structurally similar to the K-contact case, see \cite{Ruk}.

Let $p\in C$, and $v,w\in T_pM$ nonzero vectors normal to $C$. Extend $v$ and $w$ to vector f{}ields $V$ and $W$ in a neighborhood of $p$. Then we compute using $\bar A_p=0$:
\begin{align*}
{\mathrm{Hess}}_{\mu^A}(p)(v,w) &= V(W(\mu^A))(p) = V(d\mu^A(W))(p)  \\
&= V(\omega(\bar A,W))(p)= L_V(\omega(\bar{A},W))(p) \\
&= (L_V\omega)(\bar A,W)(p) + \omega([V,\bar A],W)(p) + \omega(\bar A,[V,W])(p) \\
&= (\imath_V\omega)([\bar A,W])(p) + \omega(\nabla_V\bar{A},W)(p)\\
&=-\omega(v,\nabla_w \bar A) + \omega(\nabla_v\bar A,w)\\
&=2\omega(\nabla_v\bar A,w).
\end{align*}

We now claim that $\nabla_v\bar A$ is always nonzero and perpendicular to $C$. For that we f{}ix a Riemannian metric on $M$ for which $\xi$ is Killing. Then the $T$-action is isometric, and all its
fundamental vector f{}ields are also Killing vector f{}ields. The restriction of the Killing f{}ield $\bar A$ to the geodesic $\gamma$ through $p$ in direction $v$ is a Jacobi f{}ield with initial
conditions $\bar A_p=0$ 
and $\nabla_v \bar A$; hence, if $\nabla_v \bar A$ was zero, then $\bar A$ would vanish along $\gamma$. But this would mean that $\gamma$ consists entirely of $S$-f{}ixed points, contradicting the
fact that $v$ points in a direction 
perpendicular to $C$. Hence, $\nabla_v \bar A\neq 0$. Moreover, we observe that 
\[
T_pC = \{u\in T_pM\mid [\bar A,u]=0\};
\]
note that the expression $[\bar A,u]\in T_pM$ is well-def{}ined because $\bar A$ vanishes at $p$. We then compute
\[
g(\nabla_v\bar A,u) = g([v,\bar A],u) = g(v,[\bar A,u]) = 0 
\]
for all $u\in T_pC$, using the facts that $\bar A$ is Killing and vanishes at $p$. Hence, $\nabla_v\bar A$ is perpendicular to $C$.

Because $C$ is the f{}ixed point set of the $S$-action all of whose fundamental vector f{}ields are in $\fX^{\mathrm{cosymp}}_0(M)$ (and because we have argued above that $S$ is indeed compact)
Proposition \ref{prop:fixedpointcosympsubmfd} 
shows that every component of $C$ is a cosymplectic submanifold. Hence, $\phi\nabla_v\bar A$ is also a nonzero vector perpendicular to $C$.
Then we obtain
\begin{align*}
{\mathrm{Hess}}_{\mu^A}(p)(v,\phi\nabla_v\bar A) = 2\omega(\nabla_v\bar A,\phi\nabla_v\bar A)\neq 0.
\end{align*}
\end{proof}

\subsection{Equivariant cohomology of the canonical action}
Recall that the (Cartan model of) {\bf equivariant cohomology} of an action of a compact Lie group $G$ on a manifold $M$ is def{}ined as the cohomology $H^*_G(M)$ of the complex $(C_G(M),d_G)$, where
\[
C_G(M) = (S(\fg^*)\otimes \Omega(M))^G
\]
is the space of {\bf $G$-equivariant dif{}ferential forms}, i.e., $G$-equivariant polynomials $\omega\colon \fg\to \Omega(M)$, and the equivariant dif{}ferential $d_G$ is def{}ined by
\[
(d_G\omega)(X) = d(\omega(X)) - \imath_{\bar X} (\omega(X)).
\]
The grading on $H^*_G(M)$ is def{}ined by imposing that a linear form on $\fg$ has degree two; more precisely, the equivariant dif{}ferential forms of degree $k$ are
\[
C_G^k(M) = \bigoplus_{2p+q=k}(S^p(\fg^*)\otimes \Omega^q(M))^G,
\]
and with this grading $d_G$ increases the degree by one.  The ring homomorphism
\[
S(\fg^*)^G\longrightarrow C_G(M);\, f\longmapsto f\otimes 1
\]
induces on $H^*_G(M)$ the structure of $S(\fg^*)^G$-algebra.
\begin{definition} The $G$-action is called {\bf equivariantly formal} if $H^*_G(M)\cong H^*(M;\bR)\otimes S(\fg^*)^G$ as $S(\fg^*)^G$-modules.
\end{definition}

Using the notation and assumptions of the previous subsection, we now consider the equivariant cohomology $H^*_{S}(M)$ of the $S$-action on the compact K-cosymplectic manifold $M$. We have:
\begin{theorem} If the $S$-action on a compact K-cosymplectic manifold is Hamiltonian, then it is equivariantly formal.
\end{theorem}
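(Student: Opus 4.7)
The plan is to use a generic component $\mu^A$ of the momentum map as an equivariantly perfect Morse-Bott function on $M$; once this is in place the standard Atiyah-Bott--Kirwan argument yields equivariant formality. First, I would invoke Proposition \ref{prop:mumorsebott} to fix, for a generic $A\in\fs$, an $S$-invariant Morse-Bott function $\mu^A\colon M\to\bR$ whose critical set coincides with the $S$-fixed point set $C$. It is then enough to show that every Morse-Bott index of $\mu^A$ is even.

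The key step is this evenness argument. Fix $p$ in a component $F\subset C$ and let $N_p$ denote the $g$-orthogonal complement of $T_pF$ in $T_pM$, where $g$ is an adapted metric for which $\xi$ is Killing. Since $\xi_p\in T_pF$, one has $N_p\subset \ker \eta_p$; moreover, the tensor $\phi$ is preserved by the Reeb flow and hence by its closure torus $T$, so it commutes with the isotropy representation of $S\subset T$ at $p$ and preserves the splitting $T_pM = T_pF\oplus N_p$. Because $N_p\subset \ker \eta$, $\phi^2=-\Id$ on $N_p$ and $\phi|_{N_p}$ is a complex structure compatible with the metric. The Hessian formula from the proof of Proposition \ref{prop:mumorsebott}, $\mathrm{Hess}_{\mu^A}(p)(v,w)=2\omega(Lv,w)=-2g(\phi L v,w)$ with $L=\nabla\bar A|_p\colon N_p\to N_p$ skew-symmetric (as $\bar A$ is Killing), represents the Hessian as $g(\tilde L v,w)$ with $\tilde L=-2\phi L$. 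Because $\bar A$ preserves $\phi$, the relation $L_{\bar A}\phi=0$ evaluated at the zero $p$ of $\bar A$ gives $[\phi,L]=0$ on $N_p$; hence $[\phi,\tilde L]=0$, the eigenspaces of $\tilde L$ are $\phi$-invariant and therefore of even real dimension. In particular, the negative eigenspace is even-dimensional, so the Morse-Bott index $\lambda_F$ is even.

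The final step converts even-index Morse-Bott into equivariant formality. The $\phi$-induced complex structure on the negative normal bundle of each $F$ makes this bundle canonically oriented, and combined with even indices both the ordinary and the $S$-equivariant Morse-Bott stratifications are perfect, yielding $P_t(M)=\sum_F t^{\lambda_F}P_t(F)$ and $P_t^S(M)=\sum_F t^{\lambda_F}P_t^S(F)$ for the (equivariant) Poincar\'e polynomials. Since $S$ acts trivially on each $F\subset C$, one has $H^*_S(F)\cong H^*(F;\bR)\otimes S(\fs^*)^S$ as $S(\fs^*)^S$-modules; substituting this into the equivariant sum and comparing with the non-equivariant one yields the desired isomorphism $H^*_S(M)\cong H^*(M;\bR)\otimes S(\fs^*)^S$, which is equivariant formality. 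The main obstacle is the evenness step: it requires the cosymplectic counterpart of the complex linearity of the isotropy representation familiar from Hamiltonian actions on symplectic manifolds, and this counterpart is furnished precisely by the identity $[\phi,L]=0$, which rests on the $T$-invariance of $\phi$ together with $\phi^2=-\Id$ on $N_p$.
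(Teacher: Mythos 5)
Your setup is right --- Proposition \ref{prop:mumorsebott} supplies the $S$-invariant Morse-Bott function $\mu^A$ with critical set $C=M^S$ --- and your evenness computation is correct: $[\phi,L]=0$ at a zero of $\bar A$ does follow from $L_{\bar A}\phi=0$, so the Hessian on $N_p$ is $g(\tilde L\,\cdot,\cdot)$ with $\tilde L=-2\phi L$ symmetric and commuting with the complex structure $\phi|_{N_p}$, whence even Morse-Bott indices. But the step ``even indices and orientability imply that both the ordinary and the $S$-equivariant stratifications are perfect'' is a genuine gap. The lacunary principle needs the Morse-Bott polynomial $\sum_F t^{\lambda_F}P_t(F)$ to avoid consecutive degrees, and here the components $F$ of $C$ are compact cosymplectic submanifolds (Proposition \ref{prop:fixedpointcosympsubmfd}), so $b_1(F)\geq 1$ by Remark \ref{first_Betti_number} and each $P_t(F)$ has both odd- and even-degree terms; evenness of $\lambda_F$ then gives you nothing. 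In fact, ordinary perfection of $\mu^A$ is normally deduced \emph{from} equivariant formality (equality of total Betti numbers of $M$ and $M^S$ together with the Morse-Bott inequalities), so your final Poincar\'e-polynomial comparison uses as an input something that is really a consequence of the conclusion.

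The mechanism that actually works, and the one the paper invokes by citing \cite[Theorem G.9]{GGK} (after Kirwan \cite[Proposition 5.8]{Kirwan}), bypasses index parity entirely: because the critical set equals the fixed point set, $S$ acts on each negative normal bundle with no nonzero fixed vectors, so its equivariant Euler class is a non-zero-divisor in $H^*_S(F)=H^*(F)\otimes S(\fs^*)$; the equivariant Thom--Gysin sequences therefore split, $H^*_S(M)$ is a free $S(\fs^*)$-module, and freeness is equivalent to equivariant formality for a torus action on a compact manifold. (Your $\phi$-complex structure on the negative normal bundle, commuting with the isotropy representation, would let you write that equivariant Euler class as a product over nonzero weights --- which is one way to see the non-zero-divisor property; it is this, not the evenness of the index, that carries the argument.) So: keep Propositions \ref{prop:critmuA} and \ref{prop:mumorsebott}, drop the evenness/ordinary-perfection detour, and run the Atiyah--Bott argument in equivariant cohomology only.
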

\begin{proof} The existence of a Morse-Bott function whose critical set is the f{}ixed point set of the action implies that the action is equivariantly formal; this was proven by Kirwan for the
momentum map of an 
Hamiltonian action on a compact symplectic manifold \cite[Proposition 5.8]{Kirwan} but is true for arbitrary Morse-Bott functions, see e.g.\ \cite[Theorem G.9]{GGK}. In our situation Propositions \ref{prop:critmuA} 
and \ref{prop:mumorsebott} show the existence of a Morse-Bott function with the desired property.
\end{proof}
For the action of a torus $T$ on a manifold $M$ we have that equivariant formality is equivalent to the equality of total Betti numbers $\dim H^*(M;\bR) = \dim H^*(M^T)$, where $M^T$ is the
$T$-f{}ixed point set, as follows 
from Borel's localization theorem, see e.g.\ \cite[Theorems C.20 and C.24]{GGK}. Applied to our canonical $S$-action this gives the following corollary:

\begin{corollary}\label{cor:dimC} For a compact K-cosymplectic manifold such that the canonical $S$-action is Hamiltonian we have $2\dim H^*(M;\cF_\xi) = \dim H^*(M;\bR) = \dim H^*(C;\bR)$.
\end{corollary}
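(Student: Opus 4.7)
The plan is to assemble the two equalities from results already established in the excerpt, so the proof is essentially a bookkeeping exercise of combining Theorem \ref{1-forms}, the preceding equivariant formality theorem, and the identification of $C$ as the fixed point set of $S$.

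For the first equality $2\dim H^*(M;\cF_\xi) = \dim H^*(M;\bR)$, I would simply invoke Theorem \ref{1-forms}, which gives the decomposition
\[
H^p(M;\bR) = H^p(M;\cF_\xi) \oplus [\eta]\wedge H^{p-1}(M;\cF_\xi)
\]
for every $p$. Summing over $p$ (equivalently, using the isomorphism $H^*(M;\bR)\cong H^*(M;\cF_\xi)\otimes \Lambda\langle[\eta]\rangle$ with the two-dimensional exterior factor), one obtains immediately $\dim H^*(M;\bR) = 2\dim H^*(M;\cF_\xi)$.

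For the second equality $\dim H^*(M;\bR) = \dim H^*(C;\bR)$, I would apply the equivariant formality theorem just proved to the $S$-action, together with the fact recalled right before the corollary that for a torus action equivariant formality is equivalent (via Borel localization, see \cite[Theorems C.20 and C.24]{GGK}) to $\dim H^*(M;\bR) = \dim H^*(M^S;\bR)$. It then remains to note that the fixed point set $M^S$ coincides with $C$; this was observed in Section \ref{canonical_torus}, since by definition $C$ is the union of one-dimensional $T$-orbits, which are exactly the orbits on which the codimension-one subtorus $S$ acts trivially.

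Since both ingredients are essentially on hand, there is no real obstacle; the only subtle point to make explicit is the identification $M^S=C$, which uses the fact that $\xi$ is nowhere vanishing (so no point of $M$ is $T$-fixed) together with the splitting $\ft = \fs\oplus \langle\xi\rangle$ from Corollary \ref{cor:complementint}. Putting the two chains of equalities together then yields the full statement $2\dim H^*(M;\cF_\xi)=\dim H^*(M;\bR) = \dim H^*(C;\bR)$.
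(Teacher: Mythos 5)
Your proof is correct and follows exactly the paper's argument: the first equality comes from Theorem \ref{1-forms}, and the second from equivariant formality of the $S$-action combined with Borel localization and the identification $M^S=C$ established in Section \ref{canonical_torus}. Your extra remarks spelling out why $M^S=C$ are accurate but were already recorded in the paper just before the corollary.
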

\begin{proof}
The f{}irst equality follows directly from Theorem \ref{1-forms}. The second follows from the equivariant formality of the $S$-action, because the $S$-f{}ixed points are exactly $C$.
\end{proof}
In particular, if there are only f{}initely many closed Reeb orbits, then their number is given by $\dim H^*(M;\cF_\xi)$.

\begin{corollary} \label{cor:n+1orbits} A compact $2n+1$-dimensional K-cosymplectic manifold $M$ such that the canonical $S$-action is Hamiltonian has at least $n+1$ closed Reeb orbits.
\end{corollary}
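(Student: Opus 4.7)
The plan is to combine Corollary \ref{cor:dimC} with the existence of the $n+1$ basic cohomology classes exhibited in Proposition \ref{basic_cohomology}.

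First I would dispose of the case in which $M$ carries infinitely many closed Reeb orbits, for which there is nothing to prove. So assume from now on that there are only finitely many closed Reeb orbits, and denote their number by $N$. Since distinct orbits of the Reeb flow are disjoint, and each closed Reeb orbit is homeomorphic to $S^1$, the set $C$ is a disjoint union of $N$ circles. In particular,
\[
\dim H^*(C;\bR) = 2N.
\]

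Next I would invoke Corollary \ref{cor:dimC}, which yields
\[
2N \;=\; \dim H^*(C;\bR) \;=\; \dim H^*(M;\bR) \;=\; 2\dim H^*(M;\cF_\xi),
\]
so that $N = \dim H^*(M;\cF_\xi)$. Finally I would bound the total basic Betti number from below using part 3 of Proposition \ref{basic_cohomology}: the classes $1,[\omega],[\omega]^2,\dots,[\omega]^n$ all lie in $H^*(M;\cF_\xi)$ and are non-zero; since they are concentrated in distinct (even) degrees, they are linearly independent, giving $\dim H^*(M;\cF_\xi)\ge n+1$. Combining, $N\ge n+1$.

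The only subtle point is the structural observation that, when finitely many closed Reeb orbits exist, $C$ is automatically a disjoint union of circles (so that its Betti number count is exactly $2N$); this follows from the fact that the Reeb flow partitions $M$ into orbits and that each closed orbit is a $1$-dimensional submanifold homeomorphic to $S^1$. Everything else is a direct application of results already established in the paper.
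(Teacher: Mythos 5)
Your proposal is correct and follows essentially the same route as the paper: the authors also deduce the bound from item 3 of Proposition \ref{basic_cohomology} (non-triviality of $1,[\omega],\dots,[\omega]^n$ in distinct basic degrees, giving $\dim H^*(M;\cF_\xi)\geq n+1$) combined with Corollary \ref{cor:dimC} and the observation that finitely many closed Reeb orbits force $C$ to be a disjoint union of circles. You merely spell out in more detail the case distinction and the computation $\dim H^*(C;\bR)=2N$, which the paper leaves implicit in the remark following Corollary \ref{cor:dimC}.
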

\begin{proof}
By item 3.~of Proposition \ref{basic_cohomology} the $p-$th power of $\omega$ def{}ines a nontrivial class in $H^{2p}(M;\cF_\xi)$ for $1\leq p\leq n$. Thus, $\dim H^*(M;\cF_\xi)\geq n+1$.
\end{proof}
\begin{corollary} \label{cor:exactlyn+1orbits} 
Assume that the compact $2n+1$-dimensional K-cosymplectic manifold $M$ has only f{}initely many closed Reeb orbits, and that the canonical $S$-action is Hamiltonian. Then the following 
conditions are equivalent:
\begin{enumerate}
\item $M$ has exactly $n+1$ closed Reeb orbits.
\item $H^*(M;\cF_\xi)$ is generated by $[\omega]\in H^2(M;\cF_\xi)$.
\item $M$ has the real cohomology ring of $\bC P^n\times S^1$.
\end{enumerate}
\end{corollary}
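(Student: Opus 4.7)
The plan is to chain together several results proved earlier in the paper. The key observation is that, because there are only finitely many closed Reeb orbits, the set $C$ is a disjoint union of $k$ circles, so $\dim H^*(C;\bR) = 2k$. By Corollary \ref{cor:dimC}, this forces $\dim H^*(M;\cF_\xi) = k$ and $\dim H^*(M;\bR) = 2k$. Meanwhile, by item 3 of Proposition \ref{basic_cohomology}, the $n+1$ classes $1,[\omega],[\omega]^2,\dots,[\omega]^n$ are linearly independent in $H^*(M;\cF_\xi)$, so $k\geq n+1$, recovering Corollary \ref{cor:n+1orbits} and giving us a baseline for the equivalence.

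For (1) $\Leftrightarrow$ (2): if $k = n+1$, then the $n+1$ linearly independent classes $[\omega]^p$ (for $0\leq p\leq n$) must form a basis of $H^*(M;\cF_\xi)$, so the ring is generated by $[\omega]$. Conversely, if $H^*(M;\cF_\xi)$ is generated by $[\omega]$, then since $[\omega]$ has even degree and $H^{2p}(M;\cF_\xi) = 0$ for $p>n$ (by item 2 of Proposition \ref{basic_cohomology}), the basic cohomology is spanned by the powers $[\omega]^p$, $0\leq p\leq n$, and has dimension $n+1$; by Corollary \ref{cor:dimC} this forces $k = n+1$.

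For (2) $\Leftrightarrow$ (3): this is immediate from Theorem \ref{1-forms}, which gives the splitting $H^*(M;\bR) \cong H^*(M;\cF_\xi)\otimes \Lambda\langle[\eta]\rangle$. If (2) holds, then $H^*(M;\bR) \cong \bR[[\omega]]/([\omega]^{n+1})\otimes \Lambda\langle[\eta]\rangle$, which is exactly the real cohomology ring of $\bC P^n \times S^1$. Conversely, if $M$ has the cohomology ring of $\bC P^n\times S^1$, then $\dim H^*(M;\bR) = 2(n+1)$, so $\dim H^*(M;\cF_\xi) = n+1$ by the splitting, and the $n+1$ independent classes $[\omega]^p$ must then span $H^*(M;\cF_\xi)$, yielding (2).

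No step here presents a real obstacle: the Morse--Bott and equivariant formality work has been done in the previous subsection and packaged into Corollary \ref{cor:dimC}, and the splitting for de Rham cohomology is Theorem \ref{1-forms}. The only subtlety to mention is that the vanishing of the odd-degree basic cohomology in case (2) must be invoked to rule out ``extra'' generators of odd degree when counting dimensions, but this is automatic since $[\omega]$ has even degree.
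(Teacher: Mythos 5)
Your proof is correct and follows essentially the same route as the paper: the equivalence of (1) and (2) comes from Corollary \ref{cor:dimC} (the number of closed Reeb orbits equals $\dim H^*(M;\cF_\xi)$, since $C$ is a union of circles) together with the independence of the powers $[\omega]^p$ from Proposition \ref{basic_cohomology}, and the equivalence of (2) and (3) comes from the splitting $H^*(M;\bR)\cong H^*(M;\cF_\xi)\otimes\Lambda\langle[\eta]\rangle$ of Theorem \ref{1-forms}. You simply spell out the dimension counts that the paper leaves implicit.
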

\begin{proof}
The equivalence of 1.~and 2.~is just the fact that the number of closed Reeb orbits is equal to the dimension of $H^*(M;\cF_\xi)$. Conditions 2.~and 3.~are equivalent because by Theorem \ref{1-forms} we have 
$H^*(M;\bR)=H^*(M;\cF_\xi)\otimes \Lambda\langle[\eta]\rangle$ for any K-cosymplectic manifold.
\end{proof}

\begin{remark} Instead of considering ordinary $S$-equivariant cohomology, we could have also used, in the same way as in \cite{GNT}, the equivariant basic cohomology of the associated transverse action on the foliated manifold 
$(M,\cF_\xi)$. In this way we would directly obtain information on the basic cohomology $H^*(M;\cF_\xi)$. Our situation is, however, simpler than the K-contact case considered in \cite{GNT}, because in our K-cosymplectic case the 
union $C$ of closed Reeb orbits appears as the f{}ixed point set of the subtorus $S$ of $T$. 
\end{remark}

It is an interesting question to ask which compact manifolds have the same real cohomology of $\bC P^n\times S^1$. By the K\"unneth formula, it is suf{}f{}icient to f{}ind a manifold with the same
real
cohomology 
as $\bC P^n$. We are therefore looking for a smooth manifold $M$ of dimension $2n$ whose real cohomology is
\[
H^*(M;\bR)\cong \bR[x]/x^{n+1},
\]
where $x$ has degree 2. Such a manifold is called a rational or real cohomology $\bC P^n$.

\subsection{An example}

In this section we will construct, for any $m\geq 2$, an example of a real cohomology $\bC P^{2m-1}\times S^1$ with a minimal number of closed Reeb orbits which is not dif{}feomorphic to $\bC
P^{2m-1}\times S^1$. These examples can be considered as the coK\"ahler analogues of the Sasaki structures with minimal number of closed Reeb orbits on the Stiefel manifold considered in
\cite[Section 8]{GNT}.

Let $\cQ^{2m-1}$ denote the odd complex quadric, the zero locus of a single quadratic equation in $\bC P^{2m}$. As a homogeneous space, the quadric $\cQ^{2m-1}$ can be described as
\[
\cQ^{2m-1}=\frac{\SO(2m+1)}{\SO(2m-1)\times \SO(2)}.
\]
A good reference for this is \cite[Page 278]{KN}. As this is a homogeneous space of two compact Lie groups of equal rank, its real cohomology vanishes in odd degrees \cite[Volume III, Sec.\ 11.7, Theorem 7]{Greub3}. 
Moreover, its Euler characteristic is given by the quotient of the orders of the corresponding Weyl groups, see \cite[Volume II, Sec.\ 4.21]{Greub2}, and hence equal to $2m$. As $\cQ^{2m-1}$ is a K\"ahler manifold, it follows 
that the real cohomology ring of $\cQ^{2m-1}$ is precisely 
\[
H^*(\cQ^{2m-1};\bR)\cong \bR[x]/x^{2m},
\]
where $x$ is of degree $2$. Hence $\cQ^{2m-1}$ is a real cohomology $\bC P^{2m-1}$. However, $\cQ^{2m-1}$ and $\bC P^{2m-1}$ are not homeomorphic. Indeed, the odd quadric sits in the principal circle bundle 
$S^1\to V_2(\bR^{2m+1})\to \cQ^{2m-1}$, 
where $V_2(\bR^{2m+1})$ is the Stiefel manifold of orthonormal $2-$frames in $\bR^{2m+1}$. The long exact sequence of homotopy groups of this principal bundle gives 
\[
\pi_k(\cQ^{2m-1})=\pi_k(V_2(\bR^{2m+1})) \quad \mathrm{for} \ k\geq 3.
\]
One can see that $\pi_{\ell-j}(V_j(\bR^\ell))=\bZ_2$ for $j\geq 2$ and $\ell-j$ an odd number (\cite[Proposition 11.2]{H}); hence $\pi_{2m-1}(V_2(\bR^{2m+1}))=\bZ_2$ and also $\pi_{2m-1}(\cQ^{2m-1})=\bZ_2$. However, the long exact
homotopy sequence of the 
Hopf f{}ibration $S^1\to S^{4m-1}\to\bC P^{2m-1}$ shows that $\pi_{2m-1}(\bC P^{2m-1})=0$.

As $\cQ^{2m-1}$ is a K\"ahler manifold, $M=\cQ^{2m-1}\times S^1$ is a coK\"ahler manifold with the same real cohomology as $\bC P^{2m-1}\times S^1$. Since the structure is coK\"ahler, the flow of the
Reeb f{}ield consists of isometries. $M$ admits an 
isometric action of a torus $T=T^m\times S^1$ of dimension $m+1$. 
Using our deformation theory, we can perturb the coK\"ahler structure to one for which the flow of the Reeb f{}ield is dense in this torus $T$. The unique connected subgroup of $T$ with Lie algebra
$\ker \eta$ is $S:=T^m$. 
Since $b_1(M)=1$, the $S$-action is automatically Hamiltonian by Theorem \ref{thm:b1=1thenhamiltonian}. The closed Reeb orbits of the deformed structure are now given by the orbits through $S$-f{}ixed
points in $\cQ^{2m-1}$. A general fact about homogeneous spaces $G/H$ of two compact Lie groups of equal rank is now that the f{}ixed point set of the action of a maximal torus in $H$ on $G/H$ by left
multiplication is f{}inite, and given explicitly by the quotient of Weyl groups $W(G)/W(H)$. In our case, there are hence precisely $2m$ closed Reeb orbits, as predicted by Corollary 
\ref{cor:exactlyn+1orbits}.


 \end{document}